\documentclass[11pt]{article}
\usepackage[T1]{fontenc}
\usepackage[utf8]{inputenc}
\usepackage{authblk}
 \usepackage{graphics,wrapfig}
 \usepackage{flafter}
 \usepackage{amsmath,amsthm,amsfonts,amssymb,epsfig}
 \usepackage{cases}
 \numberwithin{equation}{section}
 \usepackage{leftidx}
 \usepackage{mathrsfs}
 \usepackage{bbding}
\usepackage{fancyhdr}
\usepackage{mathrsfs}
\usepackage[toc,page,title,titletoc,header]{appendix}
\usepackage{color}
\usepackage{subfigure}
\usepackage{stmaryrd}
\usepackage{latexsym}
\usepackage{psfrag}
\usepackage{graphicx,subfigure}
\usepackage{fancyhdr,graphicx}
\usepackage{multicol}
\usepackage{dsfont}
\usepackage{bbm}
\usepackage{booktabs}
\usepackage[center]{caption2}
\usepackage{cite}
\usepackage{multirow,makecell}
\usepackage{indentfirst}
\usepackage{appendix}
\usepackage{lineno}
\allowdisplaybreaks

\setlength{\headwidth}{\textwidth} \pagestyle{plain}

\setlength{\abovecaptionskip}{0pt}
\setlength{\belowcaptionskip}{0pt}
 \setcounter{topnumber}{3}
 \setcounter{bottomnumber}{2}
 \setcounter{totalnumber}{5}
 \topmargin=0pt
\date{}
\textwidth 165mm \textheight 220mm \raggedbottom \oddsidemargin=0pt
\evensidemargin=0pt

\newtheorem{theorem}{\bf{Theorem}}[section]

\newtheorem{lemma}{\bf {Lemma}}[section]

\newtheorem{remark}{\bf{Remark}}[section]
\newtheorem{example}{\bf{Example}}[section]
\newcommand{\be}{\begin{equation}}
\newcommand{\ee}{\end{equation}}
\newcommand\bea{\begin{eqnarray}}
\newcommand\eea{\end{eqnarray}}
\newcommand{\bean}{\begin{eqnarray*}}
\newcommand{\eean}{\end{eqnarray*}}
\newcommand{\ds}{\displaystyle}

\begin{document}
\title{$H^1$-analysis of H3N3-2\textbf{$_\sigma$}-based difference method for fractional hyperbolic equations\footnote{RD was partially supported by the National Natural Science Foundation of China (No. 12201076) and the China Postdoctoral Science Foundation (No. 2023M732180); CL was partially supported by the National Natural Science Foundation of China (No. 12271339).}}

\author[,1]{Rui-lian Du\thanks{E-mail address: drl158@shu.edu.cn}}
\affil[1]{\,Department of Mathematics, Shanghai University, and Newtouch Center for Mathematics of Shanghai University, Shanghai 200444, China}

\author[,1]{Changpin Li\thanks{Corresponding author; E-mail address: lcp@shu.edu.cn}}


\author[,2]{Zhi-zhong Sun\thanks{E-mail address: zzsun@seu.edu.cn}}

\affil[2]{\,School of Mathematics, Southeast University, Nanjing 211189, China}

\maketitle
\begin{abstract}
A novel H3N3-2$_\sigma$ interpolation approximation for the Caputo fractional derivative of order $\alpha\in(1,2)$ is derived in this paper, which improves the popular L2C formula with (3-$\alpha$)-order accuracy. By an interpolation technique, the second-order accuracy of the truncation error is skillfully estimated. Based on this formula, a finite difference scheme with second-order accuracy both in time and in space is constructed for the initial-boundary value problem of the time fractional hyperbolic equation. It is well known that the coefficients' properties of discrete fractional derivatives are fundamental to the numerical stability of time fractional differential models. We prove the related properties of the coefficients of the H3N3-2$_\sigma$ approximate formula. With these properties, the numerical stability and convergence of the difference scheme are derived immediately by the energy method in the sense of $H^1$-norm. Considering the weak regularity of the solution to the problem at the starting time, a finite difference scheme on the graded meshes based on H3N3-2$_\sigma$ formula is also presented. The numerical simulations are performed to show the effectiveness of the derived finite difference schemes, in which the fast algorithms are employed to speed up the numerical computation.

{\bf Keywords.} Time fractional hyperbolic equation, Caputo derivative, H3N3-2$_\sigma$ approximation, $H^1$-analysis, finite difference scheme
\vskip5mm

{\bf AMS subject classifications.} 65M06, 65M12, 65M15
\end{abstract}

\section{Introduction}
\label{intro}
In the past few decades, it has been shown that differential equations with fractional derivatives can be applied to accurate modeling many physical processes. Therefore, much attention has been paid to the numerical analyses of such problems, and many interesting works have emerged \cite{dengwh,mengxy,wangdl,zhoutao}. A series of mature results have been obtained for fractional differential equations with Caputo time fractional derivatives, see \cite{alikhanov,Shenjie,sunzz2,chijiang,du2020,xucj,licai2019,Mclean,yan2023,sunzz1,Wanglilian,licp,Ainsworth,Dimitrov}
and lots of references cited therein. Typically, fractional hyperbolic (wave) equations with Caputo derivative have become widely used models for describing power law attenuation behavior of waves in complex inhomogeneous media.

In the current paper, we focus on the following initial-boundary value problem of the  fractional hyperbolic (or fractional diffusion-wave) equation
\cite{luchko}
\begin{align}
& _C\mathrm{D}_{0,t}^\alpha u(x,t)=u_{xx}(x,t) +f(x,t),\quad & (x,t)\in (0,L)\times(0,T],\label{eq}
\\[0.01in]
& u(x,0)=\varphi(x),\quad u_t(x,0)=\psi(x),\quad  &x\in(0,L),\label{cz}
\\[0.01in]
& u(0,t)=0,\quad u(L,t)=0,\quad & t\in (0,T],\label{bz}
\end{align}
in which $f,\varphi,\psi$ are given suitably smooth functions with consistency conditions $\varphi(x)=\psi(x)=0$ at the end points $x=0, L.$ The Caputo derivative $_C\mathrm{D}_{0,t}^\gamma\, p(t)$ is defined by
\begin{equation}\label{caputo}
_C\mathrm{D}_{0,t}^\gamma\, p(t)=\frac{1}{\Gamma(n-\gamma)}\int_0^t\frac{p^{(n)}(s)}
{(t-s)^{\gamma-n+1}}\mathrm{d}s,\quad \gamma\in(n-1,n),\quad 0<t<T.
\end{equation}
In \eqref{eq}, we set $\alpha\in(1,2)$ (i.e., when $n=2$ in \eqref{caputo}), then $p(t)\in AC^2[0,T]$ is often presumed, which is just a sufficient condition such that $_C\mathrm{D}_{0,t}^\alpha\, p(t)$ exists for $\alpha\in(1,2).$ Eqs. \eqref{eq}-\eqref{bz} can be applied to depicting evolution processes intermediate between standard diffusion and wave propagation \cite{luchko}. For example, it governs the transmission
of mechanical waves in viscoelastic media \cite{Wanghong}, the image processing \cite{ZhangW}, the mechanical response \cite{Nigmatullin} and so on.

Note that \eqref{eq} with $\alpha=2$ reduces to the standard second-order hyperbolic PDE \cite{pap-e} that models the undamped motion of the perfectly elastic material. In this case, the $u_{tt}$ term presents the inertial force of elastic material of the unit density, the Laplacian term accounts for the impact of the internal force, while $f$ represents the external loading. However, many experiments reported in the literature showed that the materials do not behave purely elastically but also demonstrate certain internal dissipation mechanisms, and so exhibit both stored and dissipative energy components with nonlocal memory effect. Hence, the conventional model given by the $u_{tt}$ term does not properly describe the damping effect in the current context. Instead, a fractional time derivative term was adopted to improve the mathematical model \cite{spanos-p}.

In general, one of the key points of constructing numerical methods for solving fractional differential equations is to discretetize fractional operators. Up to now, there have been quite a number of numerical methods for approximating the Caputo derivative of order in (0,1) (i.e., when $n=1$ in \eqref{caputo}) based on the idea of piecewise interpolations. Specifically, when $n=1,$ we rewrite \eqref{caputo} at a generic grid point $t=t_k$ as
\[_C\mathrm{D}_{0,t}^\gamma\, p(t)|_{t=t_k}=\frac{1}{\Gamma(1-\gamma)}\sum_{j=1}^k\int_{t_{j-1}}^{t_j}\frac{p'(s)}
{(t-s)^{\gamma}}\mathrm{d}s.\]
As can be seen from the above expression, the original integral interval is divided into several cells, and then we can obtain different numerical methods by adopting different interpolate formulae on each cell. A widely used method with (2-$\gamma$)-th order accuracy, called L1 formula \cite{jinbt,Kopteva,liao-li-zhang,sunzz1,xucj}, was presented by using the linear interpolation on overall intervals. In \cite{sunzz2}, the authors proposed the L1-2 formula with the order 3-$\gamma$ by using the quadratics interpolation on all intervals except the linear interpolation on the first interval. Alikhanov \cite{alikhanov} established the L2-1$_\sigma$ formula with (3-$\gamma$)-th order accuracy by substituting the final point $t=t_k$ for $t=t_{k+\sigma} (\sigma=1-\gamma/2\in(1/2,1)),$ and applied the quadratics interpolation on all the subintervals while the linear interpolation on the last interval $[t_k, t_{k+\sigma}].$ Later, the authors in \cite{xucj1} built the L2 formula with (3-$\gamma$)-th order accuracy by adopting the quadratic interpolation polynomial on all the subintervals. In practical applications, the ideal result is to obtain a high-accuracy numerical scheme with complete theoretical analyses. As can be seen from the above results, the accuracies of the L1-2 formula, L2-1$_\sigma$ formula and L2 formula are higher than that of L1 formula, nevertheless, the stability and convergence of L1-2-based difference schemes were not proved yet, and that of L2-based difference schemes are more complicated and not easy to generalize, L2-1$_\sigma$-based numerical methods is pithy and efficient, so it has been widely used.

To the best of our knowledge, there are still challenges for numerical methods of the time fractional hyperbolic equations (when $n=2$ in \eqref{caputo}) compared to that for the sub-diffusion equations (when $n=1$ in \eqref{caputo}). At present, some commonly used techniques are the method of order reduction. That is, by introducing an intermediate variable $v=u_t,$ the original governing equation \eqref{eq} is transformed into a lower order coupled system:
\begin{align}
& _C\mathrm{D}_{0,t}^{\alpha-1} v(x,t)=u_{xx}(x,t) +f(x,t),\quad & (x,t)\in (0,L)\times(0,T],\label{eqv}
\\[0.01in]
& v(x,t)=u_t(x,t),\quad  &(x,t)\in (0,L)\times(0,T].\notag
\end{align}
We note that the time-fractional derivative on the auxiliary function
$v$ in Eq. \eqref{eqv} is of order $\alpha-1$ which belongs to $(0, 1).$ Thus the time fractional hyperbolic equation
can be solved following the standard framework of the L1 method on the uniform temporal
meshes \cite{sunzz1}, L1 method on the graded meshes \cite{sun-zhao,shen-martin-sun} and L2-1$_\sigma$ method on the uniform meshes \cite{sunh,sunh1}. On the flip side, with the help of the properties of fractional derivatives, authors in \cite{lyu-p} proposed a new order reduction method by introducing a novel auxiliary function $\mathbf{v}=\,_C\mathrm{D}_{0,t}^{\frac{\alpha}{2}} \mathbf{u},$ where $\mathbf{u}=u-t\psi,$ the original equation \eqref{eq} is then converted to the following coupled system:
\begin{align*}
& _C\mathrm{D}_{0,t}^{\frac{\alpha}{2}} \mathbf{v}(x,t)=u_{xx}(x,t) +f(x,t)+t\psi,\quad & (x,t)\in (0,L)\times(0,T],
\\[0.01in]
& \mathbf{v}(x,t)=\,_C\mathrm{D}_{0,t}^{\frac{\alpha}{2}} \mathbf{u}(x,t),\quad  &(x,t)\in (0,L)\times(0,T],
\end{align*}
where $u=\mathbf{u}+t\psi.$ Based on the widespread of L1 formula and L2-1$_\sigma$ formula, the authors designed the temporal optimal convergence orders with (2-$\alpha/2)$-th order for the L1 algorithm and 2nd order for the Alikhanov algorithm respectively. However, there are two shortcomings for these methods: (1) Coupling system is not of
structure consistency in the viewpoint of time derivative order; (2) The increase in the number of equations makes the theoretical analysis of their numerical schemes more complicated and so is not easy to generalize.

For the above reasons, it is desirable to develop high-order numerical schemes without changing the structure of the time fractional hyperbolic equation \eqref{eq}. That is, discretize $((1,2)\ni)\alpha$-th order Caputo derivatives directly. Similarly, we rewrite the Caputo derivative at the point $t=t_k$ in Eq. \eqref{eq} as a sum of integrals, that is
\begin{equation}\label{caputo2}
_C\mathrm{D}_{0,t}^\alpha\, p(t)|_{t=t_k}=\frac{1}{\Gamma(2-\alpha)}\sum_{j=1}^k\int_{t_{j-1}}^{t_j}\frac{p''(s)}
{(t-s)^{\alpha-1}}\mathrm{d}s,\quad 1<\alpha<2.
\end{equation}
Up to now, however, there is very few discussion on this subject. Lynch et al. \cite{lynch} approximated the $p''(s)$ on each interval by the second central difference quotient to get L2 formula, and used a four-point discretization of the second derivative in the integrand to obtain a variant form of L2 formula, called L2C formula, but without theoretical analysis yet. Recently, Shen et al. \cite{shenjy} proposed an H2N2 interpolation for Caputo derivative defined in \eqref{caputo2} by adopting the Hermite quadratic interpolation polynomial on the first interval and quadratic Newton interpolation polynomials on the rest of intervals, the stability and convergence of the numerical scheme are analysed further. It is remarkable that the numerical accuracies of the mentioned numerical schemes are lower than 2. This motivates us to develop a high-order numerical formula for $((1,2)\ni)\alpha$-th order Caputo derivative. For any fixed $T,$ take a positive integer $N,$ and denote $\tau=T/N,$ $t_k=k\tau,~0\leq k\leq N.$ Inspired by the Alikhanov formula, for the first time, we consider $\alpha\,(1<\alpha<2)$-order Caputo derivative at a new super-convergence point $t=t_{k+\sigma},$ here $\sigma=1-\alpha/2 \in(0,1/2),$ we rewrite the integral in Caputo derivative as a sum of integrals, that is
\begin{align}\label{caputo-m2}
_C\mathrm{D}_{0,t}^\alpha\, p(t)|_{t=t_{k+\sigma}}
=&\ds\frac{1}{\Gamma(2-\alpha)}\Big[\int_{t_0}^{t_\frac{1}{2}}
\frac{p''(s)}{(t_{k+\sigma}-s)^{\alpha-1}}\mathrm{d}s+
\sum_{l=1}^{k-1}\int_{t_{l-\frac{1}{2}}}^{t_{l+\frac{1}{2}}}
\frac{p''(s)}{(t_{k+\sigma}-s)^{\alpha-1}}\mathrm{d}s\notag\\
&+\int_{t_{k-\frac{1}{2}}}^{t_{k+\sigma}}\frac{p''(s)}{(t_{k+\sigma}-s)
^{\alpha-1}}\mathrm{d}s\Big].
\end{align}
We adopt cubic Hermite
interpolation polynomial (H3) on first interval $[t_0, t_{\frac{1}{2}}],$ cubic Newton interpolation
polynomial (N3) on the interval $[t_{l-\frac{1}{2}},t_{l+\frac{1}{2}}]\,(1\leq l\leq k-1),$ and square Newton
interpolation polynomial (N$2_\sigma)$ on final interval $[t_{k-\frac{1}{2}}, t_{k+\sigma}]$ to obtain a novel formula, called H3N3-2$_\sigma$ formula. 

This paper aims to derive a novel H3N3-2$_\sigma$ approximation formula for Caputo derivative of order $\alpha\in(1,2)$ with second-order accuracy. Then, the coefficients' properties of the novel formula are presented. We further prove the unconditional numerical stability and convergence of the finite difference scheme in the sense of $H^1$-norm via the discrete energy technique. Through these studies, we hope that the task of numerical stability analysis for fractional hyperbolic equations can be put on the agenda. In addition, the solutions of time fractional differential equations likely exhibit initial weak regularity \cite{jinbt,sakamoto,Martin}, the methods on nonuniform time meshes for time fractional differential equations have gained much attention, especially on the graded meshes \cite{Kopteva,liao-li-zhang}. This motivates us to consider an H3N3-2$_\sigma$-based difference scheme on the graded meshes for problem \eqref{eq}-\eqref{bz}.

The structure of the rest paper is outlined as follows. In Section \ref{appro}, an H3N3-2$_\sigma$ numerical approximate formula for the Caputo derivative defined in \eqref{caputo-m2} is derived and a sharp error estimate for the formula is obtained. An H3N3-2$_\sigma$-based difference scheme for problem \eqref{eq}-\eqref{bz} is constructed in Section \ref{scheme}, and the numerical stability and convergence of the difference scheme in the sense of $H^1$-norm are also strictly proved. In Section \ref{sec-regularity}, a difference scheme on the graded meshes is showed for problem \eqref{eq}-\eqref{bz} when the solution has initial weak regularity. The numerical examples are provided to confirm the theoretical findings and the effectiveness of the finite difference schemes in Section \ref{example}. Finally, the brief conclusions and remarks are given in the last section.

\section{H3N3-2$_\sigma$ approximation}\label{appro}
In the section, we will derive an H3N3-2$_\sigma$ approximation to Caputo derivative of order $\alpha\in(1,2)$ defined in \eqref{caputo-m2}.

Let $\Omega_\tau=\{t_l\,|\,l=0,1,2,\cdots,N\},$ where $t_0=0,\,t_N=T.$ For a given function $p\in C^1[0,T],$ denote $p^k=p(t_k).$ And introduce the following notations
\[\delta_tp^{k-\frac{1}{2}}=\frac{p^k-p^{k-1}}{\tau},\quad \delta_t^2p^0=\frac{2}{\tau}[\delta_tp^{\frac{1}{2}}-p'(t_0)],\quad \delta_t^2p^k=\frac{\delta_tp^{k+\frac{1}{2}}-\delta_tp^{k-\frac{1}{2}}}{\tau}, \quad k\geq1. \]

Next, suppose $p\in C^{4}[0,T].$ Consider the approximation of function $p$ on the first interval $[0,t_{\frac{1}{2}}].$ Using four data $(t_0,p^0),$ $ (t_0,p'(t_0)), (t_1,p^1)$ and $(t_2,p^2),$ we can obtain a cubic Hermite interpolation polynomial ($\mathrm{H3}$)
\[H_{3,0}(t)=p(t_0)+p'(t_0)(t-t_0)+\frac{1}{2}\delta_t^2p^0(t-t_0)^2
+\frac{1}{4\tau}\big(\delta_t^2p^1-\delta_t^2p^0\big)(t-t_0)^2(t-t_1).\]
It immediately follows that
\begin{equation}\label{H3}
H''_{3,0}(t)=\delta_t^2p^1\frac{t-t_{\frac{1}{3}}}{t_1-t_{\frac{1}{3}}}
+\delta_t^2p^0\frac{t_1-t}{t_1-t_{\frac{1}{3}}}.
\end{equation}

Consider the approximation of the function $p$ defined on the interval $[t_{l-\frac{1}{2}},t_{l+\frac{1}{2}}]\,(1\leq l\leq k-1).$ Utilizing
the data $(t_{l-1},p^{l-1}), (t_{l},p^{l}),$ $(t_{l+1},p^{l+1}), (t_{l+2},p^{l+2})$ yields a cubic Newton interpolation polynomial ($\mathrm{N3}$) as follows,
\[N_{3,l}(t)=p^{l-1}+\delta_tp^{l-\frac{1}{2}}(t-t_{l-1})
+\frac{1}{2}\delta_t^2p^{l}(t-t_{l-1})(t-t_{l})
+\frac{1}{6\tau}(\delta_t^2p^{l+1}-\delta_t^2p^{l})
(t-t_{l-1})(t-t_{l})(t-t_{l+1}).\]
Taking the second order derivative on both hand sides gives
\begin{equation}\label{Newt}
N''_{3,l}(t)=\delta_t^2p^{l+1}\,\frac{t-t_{l}}{\tau}
+\delta_t^2p^{l}\,\frac{t_{l+1}-t}{\tau}.
\end{equation}

Let $\sigma=1-\alpha/2$ and $t_{k+\sigma}=t_k+\sigma\tau.$ Consider the approximation of the function $p$ defined on the interval $[t_{k-\frac{1}{2}},t_{k+\sigma}].$ Utilizing the data $(t_{k-1},p^{k-1}), (t_{k},p^{k}),$ $(t_{k+1},p^{k+1})$ yields a square Newton interpolation polynomial ($\mathrm{N2_\sigma}$) as follows,
\[N_{2,k}(t)=p^{k-1}+\delta_tp^{k-\frac{1}{2}}(t-t_{k-1})
+\frac{1}{2}\delta_t^2p^{k}(t-t_{k-1})(t-t_{k}),\]
which yields
\begin{equation}\label{super1}
N''_{2,k}(t)=\delta_t^2p^{k}.
\end{equation}

Now, we consider an H3N3-2$_\sigma$ approximation to Caputo derivative \eqref{caputo-m2}. Using \eqref{H3}-\eqref{super1}, we have
\begin{align}\label{appro1}
& _C\mathrm{D}_{0,t}^\alpha\, p(t)|_{t=t_{k+\sigma}}=\frac{1}{\Gamma(2-\alpha)}\int_{t_0}^{t_{k+\sigma}}
\frac{p''(s)}{(t_{k+\sigma}-s)^{\alpha-1}}\mathrm{d}s
\notag\\
\approx&\frac{1}{\Gamma(2-\alpha)}\Big[\int_{t_0}^{t_\frac{1}{2}}
\frac{H_{3,0}''(s)}{(t_{k+\sigma}-s)^{\alpha-1}}\mathrm{d}s
+\sum_{l=1}^{k-1}\int_{t_{l-\frac{1}{2}}}^{t_{l+\frac{1}{2}}}
\frac{N_{3,l}''(s)}{(t_{k+\sigma}-s)^{\alpha-1}}\mathrm{d}s+
\int_{t_{k-\frac{1}{2}}}^{t_{k+\sigma}}
\frac{N''_{2,k}(s)}{(t_{k+\sigma}-s)^{\alpha-1}}\mathrm{d}s\Big]
\notag\\
=&\frac{1}{\Gamma(2-\alpha)}\Bigg\{\int_{t_0}^{t_\frac{1}{2}}
\frac{t_1-s}{t_1-t_{\frac{1}{3}}}(t_{k+\sigma}-s)^{1-\alpha}
\mathrm{d}s\cdot\delta_t^2p^0
\notag\\
&+\Big[\int_{t_0}^{t_\frac{1}{2}}\frac{s-t_{\frac{1}{3}}}
{t_1-t_{\frac{1}{3}}}(t_{k+\sigma}-s)^{1-\alpha}\mathrm{d}s
+\int_{t_\frac{1}{2}}^{t_\frac{3}{2}}\frac{t_2-s}{\tau}
(t_{k+\sigma}-s)^{1-\alpha}\mathrm{d}s\Big]\cdot\delta_t^2p^1
\notag\\
&+\sum_{l=2}^{k-1}\Big[\int_{t_{l-\frac{3}{2}}}^
{t_{l-\frac{1}{2}}}\frac{s-t_{l-1}}
{\tau}(t_{k+\sigma}-s)^{1-\alpha}\mathrm{d}s+\int_{t_{l-\frac{1}{2}}}
^{t_{l+\frac{1}{2}}}
\frac{t_{l+1}-s}{\tau}(t_{k+\sigma}-s)^{1-\alpha}\mathrm{d}s\Big]
\cdot\delta_t^2p^l
\notag\\
&+\Big[\int_{t_{k-\frac{3}{2}}}^{t_{k-\frac{1}{2}}}
\frac{s-t_{k-1}}{\tau}(t_{k+\sigma}-s)^{1-\alpha}\mathrm{d}s
+\int_{t_{k-\frac{1}{2}}}^{t_{k+\sigma}}
(t_{k+\sigma}-s)^{1-\alpha}\mathrm{d}s\Big]\cdot\delta_t^2p^{k}\Bigg\}
\notag\\
=&\frac{1}{\Gamma(2-\alpha)}\Big[\int_{t_0}^{t_\frac{1}{2}}\frac{t_1-s}
{t_1-t_{\frac{1}{3}}}(t_{k+\sigma}-s)^{1-\alpha}\mathrm{d}s\cdot\delta_t^2p^0
+\sum_{l=1}^k(a_l^{(k,\alpha)}+b_l^{(k,\alpha)})\delta_t^2p^l\Big]
\notag\\
=&\frac{1}{\Gamma(2-\alpha)}\sum_{l=0}^kc_{k-l}^{(k,\alpha)}\delta_t^2p^l
=\frac{1}{\Gamma(2-\alpha)}\sum_{l=0}^kc_{l}^{(k,\alpha)}\delta_t^2p^{k-l}\notag\\
\equiv& \mathcal{D}p(t)|_{t=t_{k+\sigma}},\quad 1\leq k\leq N-1,
\end{align}
where
\begin{equation*} 
a_{l}^{(k,\alpha)}=
\begin{cases}
\displaystyle\int_{t_0}^{t_\frac{1}{2}}\frac{s-t_{\frac{1}{3}}}
{t_1-t_{\frac{1}{3}}}(t_{k+\sigma}-s)^{1-\alpha}\mathrm{d}s
=\frac{3}{2}\tau^{2-\alpha}\int_0^{\frac{1}{2}}(s-\frac{1}{3})
(k+\sigma-s)^{1-\alpha}\mathrm{d}s,\quad &l=1, \\[0.1in]
\displaystyle\int_{t_{l-\frac{3}{2}}}^{t_{l-\frac{1}{2}}}\frac{s-t_{l-1}}
{\tau}(t_{k+\sigma}-s)^{1-\alpha}\mathrm{d}s
=\tau^{2-\alpha}\int_0^1(s-\frac{1}{2})(k+\sigma-l
+\frac{3}{2}-s)^{1-\alpha}\mathrm{d}s,\quad &2\leq l\leq k,
\end{cases}
\end{equation*}
\begin{equation*} 
b_{l}^{(k,\alpha)}=
\begin{cases}
\displaystyle\int_{t_{l-\frac{1}{2}}}^{t_{l+\frac{1}{2}}}
\frac{t_{l+1}-s}{\tau}(t_{k+\sigma}-s)^{1-\alpha}\mathrm{d}s
=\tau^{2-\alpha}\int_0^1(\frac{3}{2}-s)(k+\sigma-l+
\frac{1}{2}-s)^{1-\alpha}\mathrm{d}s,\quad &1\leq l\leq k-1, \\
\displaystyle\int_{t_{k-\frac{1}{2}}}^{t_{k+\sigma}}
(t_{k+\sigma}-s)^{1-\alpha}\mathrm{d}s
=\tau^{2-\alpha}\int_0^{\sigma+\frac{1}{2}}s^{1-\alpha}\mathrm{d}s,\quad&l=k,
\end{cases}
\end{equation*}
and,
\begin{equation} \label{c-coe}
c_{l}^{(k,\alpha)}=
\begin{cases}
\ds a_{k-l}^{(k,\alpha)}+b_{k-l}^{(k,\alpha)},\quad 0\leq l\leq k-1, \\[0.1in]
\ds \int_{t_0}^{t_\frac{1}{2}}\frac{t_1-s}{t_1-t_{\frac{1}{3}}}
(t_{k+\sigma}-s)^{1-\alpha}\mathrm{d}s
=\frac{3}{2}\tau^{2-\alpha}\int_0^{\frac{1}{2}}(1-s)
(k+\sigma-s)^{1-\alpha}\mathrm{d}s,\quad l=k.
\end{cases}
\end{equation}
By tedious calculations, one gets
 \begin{equation*} 
a_{l}^{(k,\alpha)}=
\begin{cases}
\ds \frac{3}{2}\tau^{2-\alpha}\Bigg\{\frac{1}{(2-\alpha)
(3-\alpha)}\Big[(k+\sigma)^{3-\alpha}
-(k+\sigma-\frac{1}{2})^{3-\alpha}\Big]\\
\ds -\frac{1}{3(2-\alpha)}(k+\sigma)^{2-\alpha}
-\frac{1}{6(2-\alpha)}(k+\sigma-\frac{1}{2})^{2-\alpha}\Bigg\},\quad &l=1, \\[0.15in]
\ds \tau^{2-\alpha}\Bigg\{\frac{1}{(2-\alpha)(3-\alpha)}
\Big[(k+\sigma-l+\frac{3}{2})^{3-\alpha}
-(k+\sigma-l+\frac{1}{2})^{3-\alpha}\Big]\\
\ds -\frac{1}{2(2-\alpha)}(k+\sigma-l+\frac{3}{2})^{2-\alpha}
-\frac{1}{2(2-\alpha)}(k+\sigma-l+\frac{1}{2})^{2-\alpha}\Bigg\},
\quad & 2\leq l\leq k,
\end{cases}
\end{equation*}

\begin{equation*} 
b_{l}^{(k,\alpha)}=\begin{cases}
\ds \tau^{2-\alpha}\Bigg\{-\frac{1}{(2-\alpha)(3-\alpha)}
\Big[(k+\sigma-l+\frac{1}{2})^{3-\alpha}
-(k+\sigma-l-\frac{1}{2})^{3-\alpha}\Big]\\[0.15in]
\ds +\frac{3}{2(2-\alpha)}(k+\sigma-l+\frac{1}{2})^{2-\alpha}
-\frac{1}{2(2-\alpha)}(k+\sigma-l-\frac{1}{2})^{2-\alpha}\Bigg\},
\quad & 1\leq l\leq k-1, \\[0.15in]
\ds \frac{\tau^{2-\alpha}}{2-\alpha}(\sigma+\frac{1}{2})^{2-\alpha},\quad  &l=k,
\end{cases}\end{equation*}
and,
\begin{align*}
 c_k^{(k,\alpha)}=&\frac{3}{2}\tau^{2-\alpha}\Bigg
 \{-\frac{1}{(2-\alpha)(3-\alpha)}\Big[(k+\sigma)^{3-\alpha}
-(k+\sigma-\frac{1}{2})^{3-\alpha}\Big]\\
&+\frac{1}{2-\alpha}(k+\sigma)^{2-\alpha}
-\frac{1}{2(2-\alpha)}(k+\sigma-\frac{1}{2})^{2-\alpha}\Bigg\}.
\end{align*}
For the truncation error of H3N3-2$_\sigma$ formula $\mathcal{D}p(t)|_{t=t_{k+\sigma}},$ we have the estimation below.
\begin{theorem}\label{error}
Suppose $p\in C^{4}[t_0,t_{k+1}].$ Denote
\[r_{k}=\,_C\mathrm{D}_{0,t}^\alpha\, p(t)|_{t=t_{k+\sigma}}-\mathcal{D}p(t)|_{t=t_{k+\sigma}},
\quad k=1,2,3,\cdots,N-1.\]
Then, we have
\[|r_k|\leq\frac{1}{\Gamma(3-\alpha)}\max_{t_0\leq t\leq t_{k+1}}|p^{(4)}(t)|\big[\frac{5t_{k+\sigma}^{2-\alpha}}{6}\tau^2
-\frac{5}{8}(\sigma+\frac{1}{2})^{2-\alpha}\tau^{4-\alpha}\big].\]
\end{theorem}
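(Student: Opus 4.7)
The plan is to split the truncation error according to the three local interpolation rules used in the H3N3-2$_\sigma$ construction. Writing
\[r_k \;=\; \frac{1}{\Gamma(2-\alpha)}\Big[\,I_0 + \sum_{l=1}^{k-1} I_l + I_k\,\Big],\]
with
\[I_0 = \int_{t_0}^{t_{1/2}} \frac{p''(s)-H''_{3,0}(s)}{(t_{k+\sigma}-s)^{\alpha-1}}\zd s, \quad I_l = \int_{t_{l-1/2}}^{t_{l+1/2}} \frac{p''(s)-N''_{3,l}(s)}{(t_{k+\sigma}-s)^{\alpha-1}}\zd s,\]
\[I_k = \int_{t_{k-1/2}}^{t_{k+\sigma}} \frac{p''(s)-N''_{2,k}(s)}{(t_{k+\sigma}-s)^{\alpha-1}}\zd s,\]
I would bound each piece separately.

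For $I_0$ and each $I_l$ ($1\leq l\leq k-1$), the underlying interpolant is a cubic polynomial built from four data, and therefore reproduces cubics exactly. Applying the Peano kernel representation of the remainder then yields a pointwise estimate of the form $|p''(s)-H''_{3,0}(s)|,\ |p''(s)-N''_{3,l}(s)| \leq C\,\tau^2\max_{[t_0,t_{k+1}]}|p^{(4)}|$ on each local subinterval. Summing these local bounds and integrating against the singular weight telescopes into a quantity proportional to $\tau^2\int_0^{t_{k+\sigma}}(t_{k+\sigma}-s)^{1-\alpha}\zd s = \tau^2\,t_{k+\sigma}^{2-\alpha}/(2-\alpha)$; tracking the explicit numerical constants coming out of the individual Peano kernels produces the $\tfrac{5}{6}\,t_{k+\sigma}^{2-\alpha}\tau^2$ contribution after combining with the identity $(2-\alpha)\Gamma(2-\alpha)=\Gamma(3-\alpha)$.

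The crucial step is the analysis of $I_k$. Since $N_{2,k}$ is only quadratic, $N''_{2,k}(s)\equiv\delta_t^2 p^k$ and this approximation is only first-order accurate pointwise, which naively would give an error of size $\tau^{3-\alpha}$ rather than $\tau^{4-\alpha}$. I would Taylor expand around $t_k$,
\[p''(s)-\delta_t^2 p^k \;=\; p^{(3)}(t_k)(s-t_k) + \tfrac{1}{2}p^{(4)}(\mu_s)(s-t_k)^2 - \tfrac{\tau^2}{12}p^{(4)}(\eta),\]
and then invoke the superconvergence identity
\[\int_{t_{k-1/2}}^{t_{k+\sigma}}(s-t_k)(t_{k+\sigma}-s)^{1-\alpha}\zd s = 0,\]
which after the substitution $u=t_{k+\sigma}-s$ reduces to $\sigma(3-\alpha)=(\tfrac{1}{2}+\sigma)(2-\alpha)$ and holds precisely because of the distinguished choice $\sigma=1-\alpha/2$. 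This cancellation eliminates the $p^{(3)}(t_k)$ term, and the two remaining pieces are each bounded by $\max|p^{(4)}|$ times an explicit integral of magnitude $(\sigma+\tfrac{1}{2})^{2-\alpha}\tau^{4-\alpha}$; after consolidation these yield the negative correction $-\tfrac{5}{8}(\sigma+\tfrac{1}{2})^{2-\alpha}\tau^{4-\alpha}$ in the stated bound.

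The hard part, therefore, is not the decomposition strategy but the bookkeeping: tracking the exact constants through the Peano kernels of the H3 and N3 local rules so that the sharp factor $5/6$ emerges, and performing the two explicit integrations on the last interval carefully so that the $\tau^{4-\alpha}$-order remainders combine into exactly $-\tfrac{5}{8}(\sigma+\tfrac{1}{2})^{2-\alpha}$. Once the moment-cancellation identity has done its work and the three contributions are added, absorbing the factor $(2-\alpha)$ into the Gamma function yields the claimed estimate.
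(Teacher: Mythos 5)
Your decomposition and overall strategy coincide with the paper's: the same three-way split, pointwise $O(\tau^2)$ bounds on $p''-H''_{3,0}$ and $p''-N''_{3,l}$, and, on the last interval, a Taylor expansion about $t_k$ combined with the moment identity $\int_{t_{k-\frac{1}{2}}}^{t_{k+\sigma}}(s-t_k)(t_{k+\sigma}-s)^{1-\alpha}\,\mathrm{d}s=0$, which is exactly how the paper eliminates the $p'''(t_k)$ term. The one genuine error is your accounting of where the negative term $-\frac{5}{8}(\sigma+\frac{1}{2})^{2-\alpha}\tau^{4-\alpha}$ comes from. The last-interval piece $I_k$ is estimated in absolute value, so it can only add a \emph{positive} amount to the final bound; in the paper it contributes $+\frac{5}{24}\cdot\frac{(\sigma+\frac{1}{2})^{2-\alpha}}{2-\alpha}\tau^{4-\alpha}\max|p^{(4)}|$. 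The negative term arises instead from the first and middle intervals: their union is $[t_0,t_{k-\frac{1}{2}}]$, so the weight integral is $\int_{t_0}^{t_{k-\frac{1}{2}}}(t_{k+\sigma}-s)^{1-\alpha}\,\mathrm{d}s=\frac{1}{2-\alpha}\bigl[t_{k+\sigma}^{2-\alpha}-(\sigma+\frac{1}{2})^{2-\alpha}\tau^{2-\alpha}\bigr]$, not $\int_{t_0}^{t_{k+\sigma}}$ as you wrote. Multiplying by the common factor $\frac{5}{6}$ produces $-\frac{5}{6}(\sigma+\frac{1}{2})^{2-\alpha}\tau^{4-\alpha}$, and $-\frac{5}{6}+\frac{5}{24}=-\frac{5}{8}$. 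With the weight integrated up to $t_{k+\sigma}$ you would only obtain the weaker bound $\frac{5}{6}t_{k+\sigma}^{2-\alpha}\tau^2+\frac{5}{24}(\sigma+\frac{1}{2})^{2-\alpha}\tau^{4-\alpha}$, and the stated estimate would not follow.

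A second, smaller gap: you assert that Peano-kernel constants for the H3 and N3 pieces come out at $\frac{5}{6}$, but you never verify this, and it is not a routine application of the cubic remainder formula. The paper obtains $\frac{5}{6}$ for the middle intervals by viewing $N''_{3,l}$ as the linear interpolant of the difference quotients $\delta_t^2p^{l},\delta_t^2p^{l+1}$ at the nodes $t_l,t_{l+1}$, comparing it with the exact linear interpolant $D_{2,l}p$ of $p''$ at those nodes (error $\frac{3}{4}\tau^2$ on $[t_{l-\frac{1}{2}},t_{l+\frac{1}{2}}]$, which extends outside $[t_l,t_{l+1}]$ and hence is larger than the in-node bound), and then adding the $\frac{\tau^2}{12}$ errors of the difference quotients; the first interval needs a separate computation at the nodes $t_{\frac{1}{3}},t_1$ giving $\frac{299}{648}<\frac{5}{6}$. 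Since the factor $\frac{5}{6}$ in the theorem is exactly this constant, it cannot be waved through.
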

\begin{proof}
Denote
\[D_{2,1}p(t)=p''(t_1)\frac{t-t_{\frac{1}{3}}}{t_1-t_{\frac{1}{3}}}
+p''(t_{\frac{1}{3}})\frac{t_1-t}{t_1-t_{\frac{1}{3}}},\quad t\in[t_{0},t_\frac{1}{2}].\]
By Taylor's expansion, we obtain
\begin{equation}\label{D21}
|p''(t)-D_{2,1}p(t)|\leq\frac{1}{3}\tau^2\max_{t_{0}\leq t\leq t_1}|p^{(4)}(t)|,\quad t\in[t_{0},t_\frac{1}{2}].
\end{equation}
On the other hand,
\[D_{2,1}p(t)-H''_{3,0}(t)=(p''(t_1)-\delta_t^2p^1)\frac{t-t_{\frac{1}{3}}}
{t_1-t_{\frac{1}{3}}}+(p''(t_{\frac{1}{3}})-\delta_t^2p^0)\frac{t_1-t}
{t_1-t_{\frac{1}{3}}},\quad t\in[t_{0},t_\frac{1}{2}].\]
Noticing
\[|p''(t_1)-\delta_t^2p^1|\leq\frac{\tau^2}{12}\max_{t_0\leq t\leq t_2}|p^{(4)}(t)|,\quad|p''(t_{\frac{1}{3}})-\delta_t^2p^0|\leq\frac{29\tau^2}{972}
\max_{t_0\leq t\leq t_1}|p^{(4)}(t)|,\]
one has
\begin{equation}\label{D21-1}
|D_{2,1}p(t)-H''_{3,0}(t)|\leq|p''(t_1)-\delta_t^2p^1|+\frac{3}{2}|p''(t_{\frac{1}{3}})
-\delta_t^2p^0|\leq\frac{83}{648}\tau^2\max_{t_0\leq t\leq t_2}|p^{(4)}(t)|.
\end{equation}
Combination of \eqref{D21} and \eqref{D21-1} leads to
\begin{align}\label{D21-2}
|p''(t)-H''_{3,0}(t)|\leq|p''(t)-D_{2,1}p(t)|+|D_{2,1}p(t)-H''_{3,0}(t)|\leq
\frac{299}{648}\tau^2\max_{t_0\leq t\leq t_2}|p^{(4)}(t)|.
\end{align}

Let
\[D_{2,l}p(t)=p''(t_{l+1})\frac{t-t_{l}}{\tau}+p''(t_{l})\frac{t_{l+1}-t}{\tau},\quad t\in[t_{l-\frac{1}{2}},t_{l+\frac{1}{2}}],\quad 1\leq l\leq k-1.\]
By Taylor's expansion, we have
\begin{equation}\label{D2k}
|p''(t)-D_{2,l}p(t)|\leq\frac{3}{4}\tau^2\max_{t_{l-\frac{1}{2}}\leq t\leq t_{l+1}}|p^{(4)}(t)|,\quad t\in[t_{l-\frac{1}{2}},t_{l+\frac{1}{2}}],\quad 1\leq l\leq k-1.
\end{equation}
On the other side,
\[D_{2,l}p(t)-N''_{3,l}(t)=(p''(t_{l+1})-\delta_t^2p^{l+1})\frac{t-t_{l}}{\tau}
+(p''(t_{l})-\delta_t^2p^{l})\frac{t_{l+1}-t}{\tau},\quad t\in[t_{l-\frac{1}{2}},t_{l+\frac{1}{2}}],\quad 1\leq l\leq k-1.\]
Note that for any fixed $j=l,l+1,$
\[|p''(t_j)-\delta_t^2p^j|\leq\frac{\tau^2}{12}\max_{t_{j-1}\leq t\leq t_{j+1}}|p^{(4)}(t)|.\]
Thus we obtain
\begin{equation}\label{D22-1}
|D_{2,l}p(t)-N''_{3,l}(t)|\leq\frac{\tau^2}{12}\max_{t_{l-1}\leq t\leq t_{l+2}}|p^{(4)}(t)|.
\end{equation}
Combining \eqref{D2k} and \eqref{D22-1} arrives at
\begin{align}\label{D22-2}
&|p''(t)-N''_{3,l}(t)|\leq|p''(t)-D_{2,l}p(t)|+|D_{2,l}p(t)-N''_{3,l}(t)|
\leq\frac{5}{6}\tau^2\max_{t_{l-1}\leq t\leq t_{l+2}}|p^{(4)}(t)|.
\end{align}
By the Taylor expansion again, one has
\[p''(s)=p''(t_k)+(s-t_k)p'''(t_k)+\frac{1}{2}(s-t_k)^2p^{(4)}(\eta_k),\quad \eta_k\in(t_{k-\frac{1}{2}},t_{k+\sigma}).\]
Thus we have
\begin{align*}
\mathrm{I}=&\int_{t_{k-\frac{1}{2}}}^{t_{k+\sigma}}\frac{p''(s)-N_{2,k}''(s)}
{(t_{k+\sigma}-s)^{\alpha-1}}\mathrm{d}s\notag
\\
=&\int_{t_{k-\frac{1}{2}}}^{t_{k+\sigma}}\frac{p''(t_k)-\delta_t^2p^k+(s-t_k)p'''(t_k)
+\frac{1}{2}(s-t_k)^2p^{(4)}(\eta_k)}{(t_{k+\sigma}-s)^{\alpha-1}}\mathrm{d}s\notag
\\
=&\int_{t_{k-\frac{1}{2}}}^{t_{k+\sigma}}\frac{(s-t_k)p'''(t_k)
-\frac{\tau^2}{12}p^{(4)}(\hat{\eta}_k)
+\frac{1}{2}(s-t_k)^2p^{(4)}(\eta_k)}{(t_{k+\sigma}-s)^{\alpha-1}}\mathrm{d}s,\quad \hat{\eta}_k\in(t_{k-1},t_{k+1}).
\end{align*}
Using the variable substitution, one gets
\begin{align*}
&\int_{t_{k-\frac{1}{2}}}^{t_{k+\sigma}}\frac{s-t_k}
{(t_{k+\sigma}-s)^{\alpha-1}}\mathrm{d}s
=\tau^{3-\alpha}\int_0^{\sigma+\frac{1}{2}}\frac{\sigma-s}{s^{\alpha-1}}\mathrm{d}s\\
=&\frac{1}{(2-\alpha)(3-\alpha)}\tau^{3-\alpha}(\sigma+\frac{1}{2})^{2-\alpha}\Big[\sigma-(1-\frac\alpha2)\Big]=0,
\end{align*}
where $\sigma=1-\alpha/2$ is used. Therefore,
\begin{align}\label{I-final}
|\mathrm{I}|=&\Big|\int_{t_{k-\frac{1}{2}}}^{t_{k+\sigma}}\frac{
-\frac{\tau^2}{12}p^{(4)}(\hat{\eta}_k)
+\frac{1}{2}(s-t_k)^2p^{(4)}(\eta_k)}{(t_{k+\sigma}-s)^{\alpha-1}}\mathrm{d}s\Big|\notag\\
\leq&\max_{t_{k-1}\leq s\leq t_{k+1}}|p^{(4)}(s)|\int_{t_{k-\frac{1}{2}}}^{t_{k+\sigma}}
\frac{\frac{\tau^2}{12}+\frac{1}{2}(s-t_k)^2}{(t_{k+\sigma}-s)^{\alpha-1}}\mathrm{d}s
\leq\frac{5(\sigma+\frac{1}{2})^{2-\alpha}}{24(2-\alpha)}\max_{t_{k-1}\leq s\leq t_{k+1}}|p^{(4)}(s)|\tau^{4-\alpha}.
\end{align}
Consequently, it follows from \eqref{D21-2}, \eqref{D22-2} and \eqref{I-final} that
\begin{align}\label{r_k}
 |r_k|=& \frac{1}{\Gamma(2-\alpha)}\Big|\int_{t_0}^{t_\frac{1}{2}}\frac{p''(s)-H''_{3,0}(s)}
{(t_{k+\sigma}-s)^{\alpha-1}}\mathrm{d}s+\sum_{l=1}^{k-1}
\int_{t_{l-\frac{1}{2}}}^{t_{l+\frac{1}{2}}}\frac{p''(s)-N''_{3,l}(s)}
{(t_{k+\sigma}-s)^{\alpha-1}}\mathrm{d}s+\int_{t_{k-\frac{1}{2}}}^{t_{k+\sigma}}\frac{p''(s)-N''_{2,k}(s)}
{(t_{k+\sigma}-s)^{\alpha-1}}\mathrm{d}s\Big|\notag\\
\leq&\frac{1}{\Gamma(2-\alpha)}\Big[\frac{299}{648}\max_{t_0\leq t\leq t_2}|p^{(4)}(t)|\tau^2\int_{t_0}^{t_\frac{1}{2}}
(t_{k+\sigma}-s)^{1-\alpha}\mathrm{d}s\notag\\
&+\frac{5}{6}\tau^2\sum_{l=1}^{k-1}
\int_{t_{l-\frac{1}{2}}}^{t_{l+\frac{1}{2}}}\max_{t_{l-1}\leq t\leq t_{l+2}}|p^{(4)}(t)|
(t_{k+\sigma}-s)^{1-\alpha}\mathrm{d}s+\frac{5(\sigma+\frac12)^{2-\alpha}}{24\Gamma(2-\alpha)}\max_{t_{k-1}\leq t\leq t_{k+1}}|p^{(4)}(t)|\tau^{4-\alpha}\Big]\notag\\
\leq&\frac{1}{\Gamma(2-\alpha)}\Big[\frac{5}{6}\tau^2\max_{t_{0}\leq t\leq t_{k+1}}|p^{(4)}(t)|
\int_{t_0}^{t_{k-\frac{1}{2}}}(t_{k+\sigma}-s)^{1-\alpha}\mathrm{d}s+
\frac{5(\sigma+\frac12)^{2-\alpha}}{24\Gamma(2-\alpha)}\max_{t_{0}\leq t\leq t_{k+1}}|p^{(4)}(t)|\tau^{4-\alpha}\Big]\notag\\
=&\frac{5}{6\Gamma(3-\alpha)}\max_{t_0\leq t\leq t_{k+1}}|p^{(4)}(t)|\tau^2\left[t_{k+\sigma}^{2-\alpha}-(\sigma+\frac12)^{2-\alpha}\tau^{2-\alpha}\right]+
\frac{5(\sigma+\frac12)^{2-\alpha}}{24\Gamma(3-\alpha)}\max_{t_{0}\leq t\leq t_{k+1}}|p^{(4)}(t)|\tau^{4-\alpha}\notag\\
=&\frac{1}{\Gamma(3-\alpha)}\max_{t_0\leq t\leq t_{k+1}}|p^{(4)}(t)|\big[\frac{5t_{k+\sigma}^{2-\alpha}}{6}\tau^2
-\frac{5}{8}(\sigma+\frac{1}{2})^{2-\alpha}\tau^{4-\alpha}\big],
\end{align}
when $\sigma=1-\alpha/2.$ This completes the proof.
\end{proof}
\begin{remark} Here, we take $\sigma=1-\alpha/2$ mainly to make the following equation holds
\[\int_{t_{k-\frac{1}{2}}}^{t_{k+\sigma}}\frac{s-t_k}
{(t_{k+\sigma}-s)^{\alpha-1}}\mathrm{d}s=\frac{1}{(2-\alpha)(3-\alpha)}
\tau^{3-\alpha}(\sigma+\frac{1}{2})^{2-\alpha}\Big[\sigma-(1-\frac\alpha2)\Big]=0.\]
Otherwise, the above equation has an estimate $O(\tau^{3-\alpha}),$ which leads to the truncation error $r_k=O(\tau^{3-\alpha}),$ so that the numerical accuracy does not reach the overall second-order accuracy.
\end{remark}
\begin{lemma}\label{mono}
For $\{c_l^{(k,\alpha)}\,|\,0\leq l\leq k, k\geq1\}$ defined in \eqref{c-coe}, it holds that
\begin{align}
& c_0^{(k,\alpha)}>c_1^{(k,\alpha)}>\cdots>
c_{k-1}^{(k,\alpha)}>c_k^{(k,\alpha)}>
\frac{3(k+\sigma)^{1-\alpha}}{8}\tau^{2-\alpha}>0,
\label{coeff}\\[0.1in]
& 4\sigma c_0^{(k,\alpha)}-(1+2\sigma)c_1^{(k,\alpha)}>0,\label{positive-1}\\[0.1in]
&\sum_{m=1}^kc_{m-1}^{(m,\alpha)}<\frac{(8\sigma+21)T^{2-\alpha}}
{16(1+2\sigma)\sigma},\label{sum}
\end{align}
\begin{align}\label{sum1}
 \sum_{m=1}^k\big(c_m^{(m,\alpha)}\big)^2
\leq\ds\frac{9C}{16(1+2\sigma)^2}
\ds\begin{cases}
\ds \frac{(1+\sigma)^{2}T^{3-2\alpha}}{3-2\alpha}\tau+(1+\sigma)^{4-2\alpha},\quad & \alpha\in(1,1.5),\\
\ds (1+\sigma)T,\quad & \alpha=1.5,\\
\ds \frac{2-3\sigma}{2\alpha-3}(1+\sigma)^{4-2\alpha}\tau^{4-2\alpha},\quad &\alpha\in(1.5,2),
\end{cases}\end{align}
where $C$ is a positive constant, independent of $\tau$ and $k.$
\end{lemma}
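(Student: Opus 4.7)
I plan to work from the integral representations of $a_l^{(k,\alpha)}, b_l^{(k,\alpha)}, c_l^{(k,\alpha)}$ in \eqref{c-coe} for the qualitative sign claims, and to fall back on the closed-form expressions only when explicit numerical constants must be extracted.

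\textbf{Monotonicity and positivity \eqref{coeff}.} The lower bound on $c_k^{(k,\alpha)}$ will be immediate from the integral form: since $1-\alpha<0$ and $k+\sigma-s\le k+\sigma$ on $[0,1/2]$,
\[
c_k^{(k,\alpha)} = \tfrac{3}{2}\tau^{2-\alpha}\!\int_0^{1/2}(1-s)(k+\sigma-s)^{1-\alpha}\,ds \;\ge\; \tfrac{3}{2}\tau^{2-\alpha}(k+\sigma)^{1-\alpha}\!\int_0^{1/2}(1-s)\,ds \;=\; \tfrac{9}{16}\tau^{2-\alpha}(k+\sigma)^{1-\alpha},
\]
which is strictly larger than the claimed $\tfrac{3}{8}\tau^{2-\alpha}(k+\sigma)^{1-\alpha}$. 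For interior monotonicity $c_l>c_{l+1}$ with $1\le l\le k-2$, I will write $c_l^{(k,\alpha)}=\tau^{2-\alpha}F(\sigma+l)$ where
\[
F(\rho) = \int_0^1(s-\tfrac12)(\rho+\tfrac32-s)^{1-\alpha}\,ds + \int_0^1(\tfrac32-s)(\rho+\tfrac12-s)^{1-\alpha}\,ds,
\]
differentiate in $\rho$, and change variable $u=1-s$ in each integral to recombine $F'(\rho)/(1-\alpha)$ into
\[
\int_0^1\Big[\tfrac12\bigl((\rho-\tfrac12+u)^{-\alpha}+(\rho+\tfrac12+u)^{-\alpha}\bigr) + u\bigl((\rho-\tfrac12+u)^{-\alpha}-(\rho+\tfrac12+u)^{-\alpha}\bigr)\Big]\,du,
\]
which is manifestly positive since both bracketed quantities are positive for $\rho\ge\sigma+1>1/2$. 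Hence $F'(\rho)<0$ (as $1-\alpha<0$), giving $c_l>c_{l+1}$ on the interior. The three boundary transitions $c_0>c_1$ (involving the special last-interval $b_k^{(k,\alpha)}$), $c_{k-2}>c_{k-1}$, and $c_{k-1}>c_k$ (both involving the H3 Hermite weight on $[0,1/2]$) will be handled by direct asymptotic analysis of the closed forms: expansion in $(k+\sigma)^{-1}$ yields leading coefficients $\tfrac{15}{16}$ for $c_{k-1}$ and $\tfrac{9}{16}$ for $c_k$, for example, certifying strict inequality for large $k$, with small $k$ checked directly.

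\textbf{The three remaining claims.} For \eqref{positive-1}, substituting the closed forms for $c_0=a_k+b_k$ and $c_1=a_{k-1}+b_{k-1}$ into $4\sigma c_0-(1+2\sigma)c_1$ reveals a dominant positive Alikhanov-type contribution $\tfrac{4\sigma}{2-\alpha}\tau^{2-\alpha}(\sigma+\tfrac12)^{2-\alpha}$ from $b_k^{(k,\alpha)}$; the residual terms collapse under $\sigma=1-\alpha/2$ to a single-variable algebraic inequality in $\alpha\in(1,2)$ verifiable by elementary calculus. For \eqref{sum}, using $c_{m-1}^{(m,\alpha)}=a_1^{(m,\alpha)}+b_1^{(m,\alpha)}$, the $m$-sum telescopes in the $(3-\alpha)$- and $(2-\alpha)$-th powers of $m+\sigma$; the collapsed boundary is bounded by $T^{2-\alpha}$ and careful coefficient accounting produces the explicit constant $(8\sigma+21)/[16(1+2\sigma)\sigma]$. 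For \eqref{sum1}, an upper estimate $c_m^{(m,\alpha)}\le C\tau^{2-\alpha}(m+\sigma)^{1-\alpha}$ (companion to the lower estimate on $c_k$ derived above) yields $(c_m^{(m,\alpha)})^2\le C'\tau^{4-2\alpha}(m+\sigma)^{2(1-\alpha)}$, and the integral test applied to $\sum_{m=1}^k(m+\sigma)^{2(1-\alpha)}$ splits into three regimes according as $2(1-\alpha)$ is $>-1$, $=-1$, or $<-1$, each matching one case of the stated bound after using $k\tau=T$.

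\textbf{Main obstacle.} The most delicate step will be the coefficient bookkeeping for \eqref{sum}: the Hermite weight $\tfrac{3}{2}(s-\tfrac13)$ appearing in $a_1^{(m,\alpha)}$ does not telescope with the clean structure of the interior $a_l^{(m,\alpha)}$, so the explicit constant $(8\sigma+21)/[16(1+2\sigma)\sigma]$ will have to be tracked through multiple cancellations. The three boundary monotonicity comparisons are also tedious, since the weight mismatch between $a_1$ and the interior $a_l$ ($l\ge 2$) breaks the clean $F'$-argument and forces a separate direct asymptotic analysis for each transition.
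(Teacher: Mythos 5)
Your lower bound on $c_k^{(k,\alpha)}$ and your unified interior-monotonicity argument are sound: writing $c_l^{(k,\alpha)}=\tau^{2-\alpha}F(\sigma+l)$ for $1\le l\le k-2$ and showing $F'<0$ via the substitution $u=1-s$ is correct and in fact tidier than the paper, which instead proves separately that $a_l^{(k,\alpha)}$ and $b_l^{(k,\alpha)}$ increase in $l$ (symmetrizing with $s-\tfrac12=\pm t$ to handle the sign-changing weight in $a_l$). Your treatment of \eqref{sum1} --- the companion upper bound $c_m^{(m,\alpha)}\le C\tau^{2-\alpha}(m+\sigma)^{1-\alpha}$ followed by an integral test split at $2(1-\alpha)=-1$ --- is essentially the paper's argument.

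The genuine gaps are in the three boundary transitions and in \eqref{sum}. First, ``asymptotic expansion in $(k+\sigma)^{-1}$, with small $k$ checked directly'' is not a proof: the threshold beyond which the leading coefficients $\tfrac{15}{16}$ and $\tfrac{9}{16}$ dominate depends on $\alpha$, and below that threshold you face, for each small $k$, a continuum of values $\alpha\in(1,2)$, so ``checking directly'' still requires an analytic argument. Worse, for the transition $c_0^{(k,\alpha)}>c_1^{(k,\alpha)}$ the expansion parameter is simply wrong: $c_0^{(k,\alpha)}=a_k^{(k,\alpha)}+b_k^{(k,\alpha)}$ and (for $k\ge3$) $c_1^{(k,\alpha)}=a_{k-1}^{(k,\alpha)}+b_{k-1}^{(k,\alpha)}$ involve only the $k$-free arguments $\sigma+\tfrac32-s$, $\sigma+\tfrac12$, $\sigma+\tfrac52-s$, so both quantities are independent of $k$; what is actually needed is a one-parameter inequality in $\alpha$, which the paper obtains by proving $b_k^{(k,\alpha)}-b_{k-1}^{(k,\alpha)}>0$ via $(1+x)^{2-\alpha}\le 1+(2-\alpha)x$. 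Likewise the paper settles $c_{k-1}^{(k,\alpha)}>c_k^{(k,\alpha)}$ for all $k\ge2$ at once by the uniform comparison $(k+\sigma-s-\tfrac12)^{1-\alpha}>(k+\sigma-\tfrac{s}{2})^{1-\alpha}$, not by asymptotics. Second, the sum in \eqref{sum} does not telescope: the $(3-\alpha)$- and $(2-\alpha)$-powers appearing in $a_1^{(m,\alpha)}$ and $b_1^{(m,\alpha)}$ are shifted by half-integers and carry mismatched prefactors ($\tfrac32$ versus $1$), so consecutive values of $m$ do not cancel. The real mechanism is a cancellation \emph{inside} each term: Bernoulli's inequality gives $a_1^{(m,\alpha)}+b_1^{(m,\alpha)}\le \tfrac{4\sigma+13}{8(1+2\sigma)}\tau^{2-\alpha}(m+\sigma-\tfrac12)^{1-\alpha}$, one power of $m$ lower than either summand alone, after which an integral test produces the $T^{2-\alpha}/\sigma$ bound. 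You correctly identify this as the main obstacle, but the telescoping route you propose for it would fail. Finally, \eqref{positive-1} also requires the cases $k=1,2$ to be treated separately, since there $c_1^{(k,\alpha)}$ involves the Hermite first-interval weight rather than the interior Newton weight.
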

\begin{proof} We firstly prove \eqref{coeff}. For $k=1,$ noticing $\sigma=1-\alpha/2,$ one has
\begin{align*}
&c_0^{(1,\alpha)}-c_1^{(1,\alpha)}=a_1^{(1,\alpha)}+b_1^{(1,\alpha)}-c_1^{(1,\alpha)}
\\
=&\frac{3\tau^{2-\alpha}}{2(2-\alpha)}\Big\{\frac{1}{3-\alpha}
\big[(1+\sigma)^{3-\alpha}-(\frac{1}{2}+\sigma)^{3-\alpha}\big]-\frac{1}{3}(1+\sigma)^{2-\alpha}
-\frac{1}{6}(\frac{1}{2}+\sigma)^{2-\alpha}
\Big\}
\\
&+\frac{\tau^{2-\alpha}}{2-\alpha}(\frac{1}{2}+\sigma)^{2-\alpha}+
\frac{3\tau^{2-\alpha}}{2(2-\alpha)}\Big\{\frac{1}{3-\alpha}
\big[(1+\sigma)^{3-\alpha}-(\frac{1}{2}+\sigma)^{3-\alpha}\big]-(1+\sigma)^{2-\alpha}
+\frac{1}{2}(\frac{1}{2}+\sigma)^{2-\alpha}
\Big\}
\\
=&\frac{3\tau^{2-\alpha}}{(2-\alpha)(3-\alpha)}
\big[(1+\sigma)^{3-\alpha}-(\frac{1}{2}+\sigma)^{3-\alpha}\big]
-\frac{2\tau^{2-\alpha}}{2-\alpha}(1+\sigma)^{2-\alpha}
+\frac{3\tau^{2-\alpha}}{2(2-\alpha)}(\frac{1}{2}+\sigma)^{2-\alpha}
\\
=&\frac{\tau^{2-\alpha}}{(2-\alpha)(3-\alpha)}(1+\sigma)^{2-\alpha}
\big[3\sigma+2\alpha-3\big]+\frac{2\tau^{2-\alpha}}{(2-\alpha)(3-\alpha)}(\frac{1}{2}+\sigma)^{2-\alpha}
\big[1-\frac{\alpha}{2}-\sigma\big]
\\
=&\frac{(1-\sigma)(1+\sigma)^{2-\alpha}}{(2-\alpha)(3-\alpha)}\tau^{2-\alpha}>0.
\end{align*}

For $2\leq l\leq k\,(k\geq 2),$ one has
\begin{align*}
a_l^{(k,\alpha)}
=&\tau^{2-\alpha}\int_0^1(s-\frac{1}{2})
(k+\sigma-l+\frac{3}{2}-s)^{1-\alpha}\mathrm{d}s\notag\\
=&\tau^{2-\alpha}\Big[\int_0^\frac{1}{2}(s-\frac{1}{2})(k+\sigma-l+\frac{3}{2}-s)^{1-\alpha}\mathrm{d}s
+\int_\frac{1}{2}^1(s-\frac{1}{2})(k+\sigma-l+\frac{3}{2}-s)^{1-\alpha}\mathrm{d}s\Big]\\
=&\tau^{2-\alpha}\int_0^{\frac{1}{2}}s\Big[(k+\sigma-l+1-s)^{1-\alpha}
-(k+\sigma-l+1+s)^{1-\alpha}\Big]\mathrm{d}s
=\tau^{2-\alpha}\int_0^\frac{1}{2}sE(l,s)\mathrm{d}s,
\end{align*}
where we used variable substitution $s-\frac{1}{2}=\pm t$ in the second equality and $E(l,s)=(k+\sigma-l+1-s)^{1-\alpha}-(k+\sigma-l+1+s)^{1-\alpha}.$ \,It is easy to verify that $\frac{\partial E(l,s)}{\partial l}>0.$ Therefore, $E(l,s)$ rigorously increases with respect to $l,$ so does $a_l^{(k,\alpha)}.$

For $1\leq l\leq k-1\,(k\geq2),$ we get
\begin{align*}
b_l^{(k,\alpha)}=\tau^{2-\alpha}\int_0^1(s+\frac{1}{2})
(k+\sigma-l-\frac{1}{2}+s)^{1-\alpha}\mathrm{d}s.
\end{align*}
It is easy to know that $b_l^{(k,\alpha)}$ rigorously increases with respect to $l.$

Besides, for $k\geq2,$ we have
\begin{align*}
&a_1^{(k,\alpha)}-a_{2}^{(k,\alpha)}
\\
=&\tau^{2-\alpha}\Big[\int_0^\frac{1}{2}(\frac{3s}{2}
-\frac{1}{2})(k+\sigma-s)^{1-\alpha}\mathrm{d}s
-\int_0^1(s-\frac{1}{2})(k+\sigma-\frac{1}{2}-s)^{1-\alpha}\mathrm{d}s\Big]
\\
=&\tau^{2-\alpha}\Big[\int_\frac{1}{3}^\frac{1}{2}(\frac{3s}{2}
-\frac{1}{2})(k+\sigma-s)^{1-\alpha}\mathrm{d}s-\int_0^\frac{1}{3}(\frac{1}{2}-\frac{3s}{2})
(k+\sigma-s)^{1-\alpha}\mathrm{d}s
\\
&+\int_0^\frac{1}{2}(\frac{1}{2}-s)(k+\sigma-\frac{1}{2}-s)^{1-\alpha}\mathrm{d}s
-\int_\frac{1}{2}^1(s-\frac{1}{2})(k+\sigma-\frac{1}{2}-s)^{1-\alpha}\mathrm{d}s\Big]
\\
=&\tau^{2-\alpha}\Big[\int_0^\frac{1}{4}\frac{2s}{3}
(k+\sigma-\frac{1}{3}-\frac{2}{3}s)^{1-\alpha}\mathrm{d}s
-\int_0^\frac{1}{2}\frac{2s}{3}(k+\sigma-\frac{1}{3}
+\frac{2s}{3})^{1-\alpha}\mathrm{d}s
\\
&+\int_0^\frac{1}{2}s(k+\sigma-1+s)^{1-\alpha}\mathrm{d}s
-\int_0^\frac{1}{2}s(k+\sigma-1-s)^{1-\alpha}\mathrm{d}s\Big]
\\
<&\tau^{2-\alpha}\Big[\int_0^\frac{1}{4}\frac{2s}{3}
(k+\sigma-\frac{1}{3}-\frac{2}{3}s)^{1-\alpha}\mathrm{d}s
-\int_0^\frac{1}{4}\frac{4s}{3}(k+\sigma-\frac{1}{3}
+\frac{2s}{3})^{1-\alpha}\mathrm{d}s\Big]
\\
=&\tau^{2-\alpha}\int_0^\frac{1}{4}\frac{2s}{3}\Big[(k+\sigma-\frac{1}{3}
-\frac{2}{3}s)^{1-\alpha}
-\big[2^{(1-\alpha)^{-1}}(k+\sigma-\frac{1}{3}
+\frac{2}{3}s)\big]^{1-\alpha}\Big]\mathrm{d}s
\\
<&\tau^{2-\alpha}\int_0^\frac{1}{4}\frac{2s}{3}\Big[(k+\sigma-1)^{1-\alpha}
-\big[\frac{1}{2}(k+\sigma)\big]^{1-\alpha}\Big]\mathrm{d}s<0,
\end{align*}
here, in the second equality, we used variable substitutions: (1) $\frac{3}{2}s-\frac{1}{2}=t;$ (2) $\frac{1}{2}-\frac{3}{2}s=t;$ (3) $\frac{1}{2}-s=t;$ (4) $s-\frac{1}{2}=t,$ respectively. And
\begin{align*}
&b_k^{(k,\alpha)}-b_{k-1}^{(k,\alpha)}
=\frac{\tau^{2-\alpha}}{2-\alpha}\Big[(\sigma+\frac{1}{2})^{2-\alpha}
-\frac{2\sigma}{1+2\sigma}(\sigma+\frac{3}{2})^{2-\alpha}\Big]
\\
=&\frac{\tau^{2-\alpha}}{2-\alpha}(\sigma+\frac{1}{2})^{2-\alpha}
\Big[1-\frac{2\sigma}{1+2\sigma}\big(\frac{\sigma+3/2}{\sigma+1/2}\big)^{2-\alpha}
\Big]
\\
=&\frac{\tau^{2-\alpha}}{2-\alpha}(\sigma+\frac{1}{2})^{2-\alpha}
\Big[1-\frac{2}{\frac{1}{\sigma}+2}\big(1+\frac{2}{2\sigma+1}\big)^{2-\alpha}
\Big]
\\
>&\frac{\tau^{2-\alpha}}{2-\alpha}(\sigma+\frac{1}{2})^{2-\alpha}
\Big[1-\frac{1}{2}(1+\frac{2}{2\sigma+1})^{2-\alpha}
\Big]
\\
>&\frac{\tau^{2-\alpha}}{2-\alpha}(\sigma+\frac{1}{2})^{2-\alpha}
\Big[1-\frac{1}{2}(1+\frac{2(2-\alpha)}{2\sigma+1})
\Big]>0,
\end{align*}
in which $\sigma=1-\alpha/2\in(0,1/2),$ thus $1/\sigma>2.$ So one has
\begin{align*}
&c_{k-1}^{(k,\alpha)}-c_{k}^{(k,\alpha)}=a_1^{(k,\alpha)}
+b_1^{(k,\alpha)}-c_k^{(k,\alpha)}
\\
=&\tau^{2-\alpha}\Big[\int_0^1(\frac{3}{2}-s)(k+\sigma-s-
\frac{1}{2})^{1-\alpha}\mathrm{d}s
-\int_0^1(1-\frac{3}{4}s)(k+\sigma-\frac{s}{2})^{1-\alpha}\mathrm{d}s\Big]
\\
>&\tau^{2-\alpha}\int_0^1(\frac{1}{2}-\frac{1}{4}s)
(k+\sigma-\frac{s}{2})^{1-\alpha}\mathrm{d}s
>0.
\end{align*}
Thus, we have shown that $\{c_l^{(k,\alpha)}|_{l=0}^k\}$ is rigorously decreasing with respect to the subscript $l.$ We also get that
\begin{align*}
c_k^{(k,\alpha)}
=\frac{3\tau^{2-\alpha}}{4}\int_0^1(1-\frac{s}{2})
(k+\sigma-\frac{s}{2})^{1-\alpha}\mathrm{d}s
>\frac{3(k+\sigma)^{1-\alpha}}{8}\tau^{2-\alpha}.
\end{align*}

Now we turn to prove \eqref{positive-1}. It is easy to know that $4\sigma c_0^{(1,\alpha)}-(1+2\sigma)c_1^{(1,\alpha)}>0.$  For $k\geq 2,$ we arrive at
\begin{align*}
&\frac{2-\alpha}{\tau^{2-\alpha}}\Big[4\sigma c_0^{(k,\alpha)}-(1+2\sigma)c_1^{(k,\alpha)}\Big]
=\frac{8\sigma+2}{1+2\sigma}(\sigma+\frac{3}{2})^{2-\alpha}
-2(\sigma+\frac{5}{2})^{2-\alpha}\\
=&\frac{8\sigma+2}{1+2\sigma}(\sigma+\frac{3}{2})^{2-\alpha}
-2(\sigma+\frac{3}{2})^{2-\alpha}\Big(1+\frac{2}{2\sigma+3}\Big)^{2-\alpha}
\\
\geq &\frac{8\sigma+2}{1+2\sigma}(\sigma+\frac{3}{2})^{2-\alpha}
-2(\sigma+\frac{3}{2})^{2-\alpha}\Big(1+\frac{2(2-\alpha)}{2\sigma+3}\Big)
=\frac{4\sigma(1-2\sigma)}{(1+2\sigma)(2\sigma+3)}(\sigma+\frac{3}{2})^{2-\alpha}>0.
\end{align*}

Next, we prove \eqref{sum}. For $k=1,$
\begin{align*}
\frac{2-\alpha}{\tau^{2-\alpha}}c_0^{(1,\alpha)}=\frac{2-\alpha}{\tau^{2-\alpha}}
\big(a_1^{(1,\alpha)}+b_1^{(1,\alpha)}\big)=\frac{2+\sigma}
{2(1+2\sigma)}(1+\sigma)^{2-\alpha}.
\end{align*}
Thereby,
\begin{align*}
\sum_{m=1}^kc_{m-1}^{(m,\alpha)}=c_{0}^{(1,\alpha)}
=a_1^{(1,\alpha)}+b_1^{(1,\alpha)}
=\frac{(2+\sigma)\tau^{2-\alpha}}{2(1+2\sigma)(2-\alpha)}(1+\sigma)^{2-\alpha}.
\end{align*}

For $k\geq2,$
\begin{align*}
&\frac{2-\alpha}{\tau^{2-\alpha}}\big(a_1^{(k,\alpha)}+b_1^{(k,\alpha)}\big)\\
=&\frac{(k+\sigma-\frac{1}{2})^{2-\alpha}}{1+2\sigma}
\Big[\frac{3k+\sigma-1}{2}\big(1+\frac{1}{2k+2\sigma-1}\big)^{2-\alpha}
-\frac{5k-5}{2}+(k-2)\big(1-\frac{2}{2k+2\sigma-1}\big)^{2-\alpha}\Big].
\end{align*}
Consequently, we have
\begin{align*}
&\sum_{m=1}^kc_{m-1}^{(m,\alpha)}=\sum_{m=1}^k
\big(a_1^{(m,\alpha)}+b_1^{(m,\alpha)}\big)\\
=&\frac{(2+\sigma)\tau^{2-\alpha}}{2(1+2\sigma)(2-\alpha)}(1+\sigma)^{2-\alpha}+
\sum_{m=2}^k\frac{(m+\sigma-\frac{1}{2})^{2-\alpha}\tau^{2-\alpha}}{(1+2\sigma)(2-\alpha)}
\Big[\frac{3m+\sigma-1}{2}\big(1+\frac{1}{2m+2\sigma-1}\big)^{2-\alpha}
\\
&-\frac{5m-5}{2}+(m-2)\big(1-\frac{2}{2m+2\sigma-1}\big)^{2-\alpha}\Big]
\\
=&\frac{(2+\sigma)\tau^{2-\alpha}}{2(1+2\sigma)(2-\alpha)}(1+\sigma)^{2-\alpha}+
\frac{\tau^{2-\alpha}}{(1+2\sigma)(2-\alpha)}\sum_{m=2}^k
(m+\sigma-\frac{1}{2})^{2-\alpha}
P^m,
\end{align*}
where $\ds P^m=\frac{3m+\sigma-1}{2}\big(1+\frac{1}{2m+2\sigma-1}\big)^{2-\alpha}
-\frac{5m-5}{2}+(m-2)\big(1-\frac{2}{2m+2\sigma-1}\big)^{2-\alpha}.$\\[0.1in]
With the help of the inequality $(1+x)^r\leq 1+rx,\quad r\in(0,1),~1>x>-1,$ we have
\begin{align*}
P^m\leq&\frac{3m+\sigma-1}{2}\big(1+\frac{2-\alpha}{2m+2\sigma-1}\big)
-\frac{5m-5}{2}+(m-2)
\big(1-\frac{2(2-\alpha)}{2m+2\sigma-1}\big)\\
=&\frac{\sigma}{2}+\frac{(2-\alpha)(\sigma+7-m)}{2(2m+2\sigma-1)}
=\frac{\sigma(4\sigma+13)}{2(2m+2\sigma-1)}=\frac{\sigma(4\sigma+13)}{4(m+\sigma-\frac{1}{2})}.
\end{align*}
Further, we get
\begin{align*}
&\frac{\tau^{2-\alpha}}
{(1+2\sigma)(2-\alpha)}\sum_{m=2}^k(m+\sigma-\frac{1}{2})^{2-\alpha}P^m
\leq\frac{(4\sigma+13)\tau^{2-\alpha}}
{8(1+2\sigma)}\sum_{m=2}^k(m+\sigma-\frac{1}{2})^{1-\alpha}\\
\leq&\frac{(4\sigma+13)\tau^{2-\alpha}}
{8(1+2\sigma)}\sum_{m=2}^{k}\int_{m-1}^m(s+\sigma-\frac{1}{2})^{1-\alpha}\mathrm{d}s
=\frac{(4\sigma+13)\tau^{2-\alpha}}
{8(1+2\sigma)}\int_{1}^k(s+\sigma-\frac{1}{2})^{1-\alpha}\mathrm{d}s
\\
<&\frac{(4\sigma+13)\tau^{2-\alpha}}
{16(1+2\sigma)\sigma}(k+\sigma-\frac{1}{2})^{2-\alpha}
<\frac{(4\sigma+13)T^{2-\alpha}}
{16(1+2\sigma)\sigma},
\end{align*}
where we have used the mean value theorem of integrals, that is \[\int_{m-1}^m(s+\sigma-\frac{1}{2})^{1-\alpha}\mathrm{d}s=(\xi+\sigma-\frac{1}{2})^{1-\alpha}\geq
(m+\sigma-\frac{1}{2})^{1-\alpha} ,~\xi\in(m-1,m).\]
Thus, one gets
\[\sum_{m=1}^kc_{m-1}^{(m,\alpha)}<\frac{(8\sigma+21)T^{2-\alpha}}
{16(1+2\sigma)\sigma}.\]

Finally, we are going to prove \eqref{sum1}. For $k\geq1,$
\begin{align*}
c_k^{(k,\alpha)}
=\frac{3\tau^{2-\alpha}}{2(1+2\sigma)(2-\alpha)}\Big[(k-1)
\big(k+\sigma-\frac{1}{2}\big)^{2-\alpha}-(k-1-\sigma)
\big(k+\sigma\big)^{2-\alpha}\Big].
\end{align*}
Hence,
\begin{align*}
\sum_{m=1}^k\big(c_m^{(m,\alpha)}\big)^2
=\frac{9\tau^{4-2\alpha}}{4(1+2\sigma)^2(2-\alpha)^2}\sum_{m=1}^k\Big[(m-1)
\big(m+\sigma-\frac{1}{2}\big)^{2-\alpha}-(m-1-\sigma)
\big(m+\sigma\big)^{2-\alpha}\Big]^2.
\end{align*}
Since the summation
\begin{align}\label{sum-217}
&\sum_{m=1}^k\Big[(m-1)
\big(m+\sigma-\frac{1}{2}\big)^{2-\alpha}-(m-1-\sigma)
\big(m+\sigma\big)^{2-\alpha}\Big]^2\notag\\
=&\sum_{m=1}^k(m-1)^2(m+\sigma)^{4-2\alpha}\Big[\big(1-\frac{1}{2(m+\sigma)}\big)
^{2-\alpha}-\big(1-\frac{\sigma}{m-1}\big)\Big]^2\notag\\
\leq&C\cdot\sum_{m=1}^k(m-1)^2(m+\sigma)^{4-2\alpha}\Big[1-\frac{2\sigma}{2(m+\sigma)}
-\big(1-\frac{\sigma}{m-1}\big)\Big]^2\notag\\
=&C\cdot\sigma^2(1+\sigma)^2\sum_{m=1}^k\frac{1}{(m+\sigma)^{2\alpha-2}}
\leq C\cdot\sigma^2(1+\sigma)^2\Big[\frac{1}{(1+\sigma)^{2\alpha-2}}+
\sum_{m=2}^k\int_{m-1}^m\frac{1}{(s+\sigma)^{2\alpha-2}}\mathrm{d}s\Big] \\
=&C\cdot\sigma^2(1+\sigma)^2\Big[\frac{1}{(1+\sigma)^{2\alpha-2}}+
\int_{1}^k\frac{1}{(s+\sigma)^{2\alpha-2}}\mathrm{d}s\Big]\notag\\
=&C\cdot\sigma^2(1+\sigma)^2\Big[\frac{3\sigma-2}{3-2\alpha}(1+\sigma)^{2-2\alpha}+
\frac{1}{3-2\alpha}(k+\sigma)^{3-2\alpha}\Big]\notag\\
<&C\cdot\begin{cases}
\ds \frac{\sigma^2(1+\sigma)^2}{3-2\alpha}(k+\sigma)^{3-2\alpha}
+\sigma^2(1+\sigma)^{4-2\alpha},\quad  \alpha\in(1,1.5),\notag\\
\ds \sigma^2(1+\sigma)^2\Big[\frac{1}{1+\sigma}+\ln\big(1+\frac{k-1}{1+\sigma}\big)\Big]
<k\sigma^2(1+\sigma),\quad  \alpha=1.5,\notag\\
\ds \frac{2-3\sigma}{2\alpha-3}\sigma^2(1+\sigma)^{4-2\alpha},\quad \alpha\in(1.5,2),\notag
\end{cases}
\end{align}
where $C$ is a positive constant, and where the mean value theorem of integrals in \eqref{sum-217} is used, that is \[\int_{m-1}^m\frac{1}{(s+\sigma)^{2\alpha-2}}\mathrm{d}s=\frac{1}{(\xi+\sigma)^{2\alpha-2}}\geq
\frac{1}{(m+\sigma)^{2\alpha-2}} ,~\xi\in(m-1,m).\]

One has
\begin{align*}
 \sum_{m=1}^k\big(c_m^{(m,\alpha)}\big)^2
\leq\ds\frac{9C}{16(1+2\sigma)^2}
\ds\begin{cases}
\ds \frac{(1+\sigma)^{2}T^{3-2\alpha}}{3-2\alpha}\tau+(1+\sigma)^{4-2\alpha},\quad & \alpha\in(1,1.5),\\
\ds (1+\sigma)T,\quad & \alpha=1.5,\\
\ds \frac{2-3\sigma}{2\alpha-3}(1+\sigma)^{4-2\alpha}\tau^{4-2\alpha},\quad &\alpha\in(1.5,2),
\end{cases}\end{align*}
in which $C$ is a positive constant, independent of $\tau$ and $k.$
Therefore, the proof is complete.
\end{proof}
\begin{lemma}\label{le3}
Suppose $p\in C^2[0,T],$ $\sigma\in(0,\frac{1}{2}),$ then
\begin{equation*}
p(t_{k+1})=\big(\frac{3}{2}-\sigma\big)\Big[(\frac{1}{2}+\sigma)p(t_{k+1})
+(\frac{1}{2}-\sigma)p(t_{k})\Big]
-\big(\frac{1}{2}-\sigma\big)\Big[(\frac{1}{2}+\sigma)p(t_{k})
+(\frac{1}{2}-\sigma)p(t_{k-1})\Big]+O(\tau^2).
\end{equation*}
\end{lemma}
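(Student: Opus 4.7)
The statement is really an algebraic identity modulo second-order Taylor remainders, so I would attack it by direct Taylor expansion around $t_k$. Since $p\in C^2[0,T]$, Taylor's theorem with Lagrange remainder gives
\[
p(t_{k+1}) = p(t_k) + \tau p'(t_k) + O(\tau^2),\qquad
p(t_{k-1}) = p(t_k) - \tau p'(t_k) + O(\tau^2),
\]
uniformly in $k$, with the $O(\tau^2)$ constants controlled by $\max_{[0,T]}|p''|$. The plan is to substitute these expansions into each bracketed expression on the right-hand side and then collect terms.

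Concretely, writing $A := (\tfrac12+\sigma)p(t_{k+1})+(\tfrac12-\sigma)p(t_k)$ and $B := (\tfrac12+\sigma)p(t_k)+(\tfrac12-\sigma)p(t_{k-1})$, I would first verify
\[
A = p(t_k) + (\tfrac12+\sigma)\tau p'(t_k) + O(\tau^2),\qquad
B = p(t_k) - (\tfrac12-\sigma)\tau p'(t_k) + O(\tau^2),
\]
using that $(\tfrac12+\sigma)+(\tfrac12-\sigma)=1$. Plugging these into $(\tfrac32-\sigma)A-(\tfrac12-\sigma)B$, the constant term becomes $[(\tfrac32-\sigma)-(\tfrac12-\sigma)]\,p(t_k) = p(t_k)$, and the coefficient of $\tau p'(t_k)$ becomes
\[
(\tfrac32-\sigma)(\tfrac12+\sigma) + (\tfrac12-\sigma)^2
= \bigl(\tfrac34+\sigma-\sigma^2\bigr) + \bigl(\tfrac14-\sigma+\sigma^2\bigr) = 1.
\]
Hence the right-hand side equals $p(t_k)+\tau p'(t_k)+O(\tau^2)$, which is exactly $p(t_{k+1})+O(\tau^2)$ by the first Taylor expansion above.

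A conceptual way to see why the coefficients work out this cleanly, and which I might mention as a remark, is that $A$ and $B$ are the values of the linear interpolants of $p$ at the off-grid points $t_k+(\tfrac12+\sigma)\tau$ and $t_{k-1}+(\tfrac12+\sigma)\tau$ respectively; these two points are exactly $\tau$ apart, and $t_{k+1}$ lies a fraction $\tfrac32-\sigma$ of that distance beyond the later one, so $(\tfrac32-\sigma)A-(\tfrac12-\sigma)B$ is precisely the linear extrapolation to $t_{k+1}$. Standard linear interpolation/extrapolation error with $C^2$ data then delivers the $O(\tau^2)$ remainder without any arithmetic. The only real "obstacle" is bookkeeping the remainders so that they remain $O(\tau^2)$ after being multiplied by the $\sigma$-dependent weights, which is immediate since $|\tfrac32-\sigma|$ and $|\tfrac12-\sigma|$ are bounded for $\sigma\in(0,\tfrac12)$.
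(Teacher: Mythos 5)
Your proposal is correct. Your primary argument --- Taylor expansion of $p(t_{k\pm1})$ about $t_k$ followed by verification that the constant term and the coefficient of $\tau p'(t_k)$ both collapse to $1$ --- is a valid, self-contained computation: the arithmetic $(\tfrac32-\sigma)(\tfrac12+\sigma)+(\tfrac12-\sigma)^2=1$ checks out, and $p\in C^2$ suffices to control all remainders uniformly by $\max_{[0,T]}|p''|$. The paper instead argues in two interpolation steps: it first writes $p(t_{k+1})=(\tfrac32-\sigma)p(t_{k+\sigma+\frac12})-(\tfrac12-\sigma)p(t_{k+\sigma-\frac12})+O(\tau^2)$ (linear extrapolation from two nodes a distance $\tau$ apart), and then replaces each of $p(t_{k+\sigma\pm\frac12})$ by the linear interpolant of its two neighbouring grid values, absorbing another $O(\tau^2)$. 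That is precisely the ``conceptual remark'' you append, so you have in effect produced both proofs; the paper's version is shorter and explains where the particular weights come from, while your Taylor computation is more mechanical but makes the source and uniformity of the $O(\tau^2)$ constant completely explicit. One small slip in your remark only: $t_{k+1}$ lies a fraction $\tfrac12-\sigma$ (not $\tfrac32-\sigma$) of the gap beyond the later node $t_{k+\sigma+\frac12}$; the weight $\tfrac32-\sigma$ is the barycentric coordinate of $t_{k+1}$ measured from the earlier node $t_{k+\sigma-\frac12}$. This wording issue does not affect your main argument.
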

\begin{proof}
By the linear interpolation, we have
\begin{align*}
p(t_{k+1})=&\big(\frac{3}{2}-\sigma\big)p(t_{k+\sigma+\frac{1}{2}})-
\big(\frac{1}{2}-\sigma\big)p(t_{k+\sigma-\frac{1}{2}})+O(\tau^2)\\
=&\big(\frac{3}{2}-\sigma\big)\Big[(\frac{1}{2}+\sigma)p(t_{k+1})
+(\frac{1}{2}-\sigma)p(t_{k})\Big]
-\big(\frac{1}{2}-\sigma\big)\Big[(\frac{1}{2}+\sigma)p(t_{k})
+(\frac{1}{2}-\sigma)p(t_{k-1})\Big]+O(\tau^2).
\end{align*}
The proof is ended.
\end{proof}

Although the above formula holds for other $\sigma$'s values, we take $\sigma\in(0,\frac 12)$ just for later convenience.
\section{Finite difference scheme}\label{scheme}
This section devotes to presenting a second-order difference scheme for solving problem \eqref{eq}-\eqref{bz}.

We firstly introduce some spatial notations. Given a positive constant $M,$ denote $h=L/M.$ Let $x_i=ih\,(0\leq i\leq M),$ where $x_0=0,\, x_M=L,$ $\Omega_h=\{x_i\,|\,0\leq i\leq M\}.$
Let
\[\mathcal{U}_h=\{u\,|\,u=(u_0,\cdots,u_M)\},\quad \mathring{\mathcal{U}}_h=\{u\,|\,u=(u_0,\cdots,u_M),\quad u_0=u_M=0\}.\]
For $u\in\mathcal{U}_h,$ introduce the following notations
\[\delta_xu_{i+\frac{1}{2}}=\frac{1}{h}(u_{i+1}-u_i),\quad \delta_x^2u_i=\frac 1{h^2}(u_{i+1}-2u_i+u_{i-1}).\]
For any $u,v\in\mathring{\mathcal{U}}_h,$ the inner products defined by
\[(u,v)=h\sum_{i=1}^{M-1}u_iv_i,\quad \langle u,v\rangle=h\sum_{i=0}^{M-1}
(\delta_xu_{i+\frac{1}{2}})(\delta_xv_{i+\frac{1}{2}}).\]
We define some useful norms as follows:
\begin{align*}
&\|u\|=\sqrt{(u,u)}\qquad \text{discrete $L^2$-norm (mean norm);}\\
&|u|_1=\sqrt{\langle u,u\rangle}\qquad \text{discrete $H^1$-norm ($L^2$-norm of difference quotient);}\\
&\|u\|_\infty=\max_{0\leq i\leq M}|u_i|\qquad \text{discrete infinite-norm (uniform-norm).}
\end{align*}
For $u\in\mathring{\mathcal{U}}_h,$ the following two main embedded inequalities are always true \cite{sunzz-pd}
\[\|u\|_\infty\leq \frac{L}{\sqrt{6}}|u|_1,\quad \|u\|\leq\frac{\sqrt{L}}{2}|u|_1.\]

\subsection{Derivation of the difference scheme}\label{sub-sec-scheme}
Suppose $u(x,t)\in C^{(4,4)}([0,L]\times[0,T]).$ Denote
\[U_i^k=u(x_i,t_k),\,\, 0\leq k\leq N, \quad f_i^{k+\sigma}=f(x_i,t_{k+\sigma}),\,\, 0\leq k\leq N-1; \quad \varphi_i=\varphi(x_i),
\quad \psi_i=\psi(x_i),\,\, 0\leq i\leq M.\]
Considering Eq. \eqref{eq} at the point $(x_i,t_{k+\sigma}),$ we get
\[_C\mathrm{D}_{0,t}^\alpha u(x_i,t_{k+\sigma})=u_{xx}(x_i,t_{k+\sigma}) +f(x_i,t_{k+\sigma}),\quad 1\leq i\leq M-1,~1\leq k\leq N-1.\]
Apply \eqref{appro1} to approximating the temporal fractional derivative.
For the spatial derivative, one obtains
\begin{equation}\label{spatial}\begin{array}{l}
\ds \hspace{0.15in} u_{xx}(x_i,t_{k+\sigma})
\\[0.1in]
=\ds u_{xx}(x_i,t_{k})+\sigma\tau\cdot\frac{u_{xx}(x_i,t_{k+1})
-u_{xx}(x_i,t_{k-1})}{2\tau}+O(\tau^2)\\[0.1in]
=\ds (\frac{1}{2}+\sigma)\cdot\frac{u_{xx}(x_i,t_{k})
+u_{xx}(x_i,t_{k+1})}{2}+(\frac{1}{2}-\sigma)\cdot
\frac{u_{xx}(x_i,t_{k-1})+u_{xx}(x_i,t_{k})}{2}+O(\tau^2)
\\[0.1in]
=\ds (\frac{1}{2}+\sigma)\delta_x^2U_i^{k+\frac{1}{2}}
+(\frac{1}{2}-\sigma)\delta_x^2U_i^{k-\frac{1}{2}}+O(\tau^2+h^2),
\quad 1\leq i\leq M-1,\quad 1\leq k\leq N-1.
\end{array}\end{equation}
Thus, we have
\begin{equation}\label{numerical}\begin{array}{l}
\ds \hspace{0.15in} \frac{1}{\Gamma(2-\alpha)}\sum_{l=0}^kc_{l}^{(k,\alpha)}\delta_t^2U_i^{k-l}
\\[0.1in]
=\ds (\frac{1}{2}+\sigma)\delta_x^2U_i^{k+\frac{1}{2}}+(\frac{1}{2}-\sigma)
\delta_x^2U_i^{k-\frac{1}{2}}+f_i^{k+\sigma}+R_i^k,\quad 1\leq i\leq M-1,~1\leq k\leq N-1,
\end{array}\end{equation}
where
\begin{align*}\delta_t^2U_i^l
\begin{split}
=\left\{
\begin{array}{ll}
\ds \frac{1}{\tau^2}\big(U_i^{l+1}-2U_i^l+U_i^{l-1}\big),&1\leq l\leq N-1,
\\[0.1in]
\ds\frac{2}{\tau}\big[\frac{1}{\tau}(U_i^1-U_i^0)-\psi_i\big],&l=0.
\end{array}
\right.\end{split}
\end{align*}
Obviously, there exists a positive constant $c_1$ such that
\begin{equation}\label{trunction}
|R_i^k|\leq c_1(\tau^2+h^2),\quad 1\leq i\leq M-1,~ 1\leq k\leq N-1.
\end{equation}

For the computation of the approximate value at $t=t_1$, we consider
\begin{equation}\label{initial-approximate}\begin{array}{l}
\ds \hspace{0.15in} _C\mathrm{D}_{0,t}^\alpha\, p(t)|_{t=t_{1-\frac{\alpha}{3}}}=\frac{1}{\Gamma(2-\alpha)}
\int_{t_0}^{t_{1-\frac{\alpha}{3}}}\frac{p''(s)}
{(t_{1-\frac{\alpha}{3}}-s)^{\alpha-1}}
\mathrm{d}s
\\[0.15in]
\approx\ds \frac{1}{\Gamma(2-\alpha)}\int_{t_0}^{t_{1-\frac{\alpha}{3}}}
\frac{\delta_t^2p^0}{(t_{1-\frac{\alpha}{3}}-s)^{\alpha-1}}\mathrm{d}s
\\[0.15in]
=\ds\frac{1}{\Gamma(3-\alpha)}t_{1-\frac{\alpha}{3}}^{2-\alpha}\cdot\frac{2}{\tau}
(\delta_tp^{\frac{1}{2}}-p'(t_0))\equiv \widehat{\mathcal{D}}p(t)|_{t=t_{1-\frac{\alpha}{3}}}.
\end{array}\end{equation}
Noticing the fact that
\[\int_{t_0}^{t_{1-\frac{\alpha}{3}}}(s-t_{\frac{1}{3}})
(t_{1-\frac{\alpha}{3}}-s)^{1-\alpha}\mathrm{d}s=0,\]
one gets
\begin{align*}
&_C\mathrm{D}_{0,t}^\alpha\, p(t)|_{t=t_{1-\frac{\alpha}{3}}}-\widehat{\mathcal{D}}
p(t)|_{t=t_{1-\frac{\alpha}{3}}}\notag
\\
=&\frac{1}{\Gamma(2-\alpha)}\int_{t_0}^{t_{1-\frac{\alpha}{3}}}
\frac{p''(s)-\delta_t^2p^0}
{(t_{1-\frac{\alpha}{3}}-s)^{\alpha-1}}\mathrm{d}s\notag
\\
=&\frac{1}{\Gamma(2-\alpha)}\int_{t_0}^{t_{1-\frac{\alpha}{3}}}
\big[p'''(t_{\frac{1}{3}})
(s-t_{\frac{1}{3}})+\frac{1}{2}p^{(4)}(\tilde{\eta})(s-t_{\frac{1}{3}})^2
+p''(t_{\frac{1}{3}})-\delta_t^2p^0\big](t_{1-\frac{\alpha}{3}}-s)^{1-\alpha}
\mathrm{d}s\notag
\\
=&\frac{1}{\Gamma(2-\alpha)}\int_{t_0}^{t_{1-\frac{\alpha}{3}}}
\big[\frac{1}{2}p^{(4)}(\tilde{\eta})(s-t_{\frac{1}{3}})^2+
p''(t_{\frac{1}{3}})-\delta_t^2p^0\big]
(t_{1-\frac{\alpha}{3}}-s)^{1-\alpha}\mathrm{d}s\notag
\\
=&O(\tau^{4-\alpha}),\quad \tilde{\eta}\in(t_0,t_{1-\frac{\alpha}{3}}).
\end{align*}
Considering Eq. \eqref{eq} at the point $(x_i,t_{1-\frac{\alpha}{3}})$ gives
\begin{equation}\label{scheme-eq1}
\frac{1}{\Gamma(3-\alpha)}t_{1-\frac{\alpha}{3}}^{2-\alpha}\cdot\frac{2}{\tau}
(\delta_tU_i^{\frac{1}{2}}-\psi_i)=\delta_x^2\big[(1-\frac{\alpha}{3})U_i^1
+\frac{\alpha}{3}U_i^0\big]+f(x_i,t_{1-\frac{\alpha}{3}})+R_i^0,\quad 1\leq i\leq M-1,
\end{equation}
in which there is a positive constant $c_2$ such that
\begin{equation}\label{error1}
|R_i^0|\leq c_2(\tau^{4-\alpha}+h^2),\quad 1\leq i\leq M-1.
\end{equation}

Noticing the initial-boundary value conditions \eqref{cz} and \eqref{bz}, we have
\begin{equation}\label{czs}\begin{array}{l}
\ds U_i^0=\varphi_i,\quad  1\leq i\leq M-1,
\\[0.1in]
\ds U_0^k=0,\quad U_M^k=0,\quad  0\leq k\leq N.
\end{array}\end{equation}
Omitting small terms in \eqref{numerical} and \eqref{scheme-eq1}, and replacing the grid function $U_i^k$ by
its numerical approximation $u_i^k,$ we construct a second-order difference scheme for solving problem \eqref{eq}-\eqref{bz}
as follows:
\begin{align}
&\frac{1}{\Gamma(3-\alpha)}t_{1-\frac{\alpha}{3}}^{2-\alpha}\cdot\delta_t^2u_i^0
=\delta_x^2\big[(1-\frac{\alpha}{3})u_i^1+\frac{\alpha}{3}u_i^0\big]
+f_i^{1-\frac{\alpha}{3}},& 1\leq i\leq M-1,\label{scheme-eq2}
\\
&\frac{1}{\Gamma(2-\alpha)}\sum_{l=0}^kc_{l}^{(k,\alpha)}\delta_t^2u_i^{k-l}
\notag
\\
=&(\frac{1}{2}+\sigma)\delta_x^2 u_i^{k+\frac{1}{2}}+(\frac{1}{2}-\sigma)\delta_x^2 u_i^{k-\frac{1}{2}}+f_i^{k+\sigma},\,& 1\leq i\leq M-1,~1\leq k\leq N-1,\label{scheme-eq}
\\
&u_i^0=\varphi_i,& 1\leq i\leq M-1,\label{scheme-cz}
\\
&u_0^k=0,\quad u_M^k=0, &0\leq k\leq N.\label{scheme-bz}
\end{align}

Denote
\begin{equation}\label{uhat}
\widehat{u}_i^{k+1}=(\frac{3}{2}-\sigma)\big[(\frac{1}{2}+\sigma)u_i^{k+1}
+(\frac{1}{2}-\sigma)u_i^{k}\big]
-(\frac{1}{2}-\sigma)\big[(\frac{1}{2}+\sigma)u_i^{k}
+(\frac{1}{2}-\sigma)u_i^{k-1}\big].
\end{equation}

Noticing Lemma \ref{le3}, we may regard $\widehat{u}_i^{k+1}$ as an approximation of $U_i^{k+1},$ if $u_i^{k+1}, u_i^k$ and $u_i^{k-1}$ are the approximations of $U_i^{k+1}, U_i^k$ and $U_i^{k-1},$ respectively.
\subsection{Numerical stability and convergence of the difference scheme}\label{stability}
Firstly, we introduce one main lemma.
\begin{lemma}\label{positive}
For any grid functions $u^0,u^1,\cdots,u^{k+1}\in\mathring{\mathcal{U}}_h,$ we have the following inequality
\begin{equation}\label{positive3}\begin{array}{l}
\ds\hspace{0.15in} \sum_{l=0}^{k-1}c_l^{(k,\alpha)}(\delta_tu^{k-l+\frac{1}{2}}
-\delta_tu^{k-l-\frac{1}{2}},
(\frac{1}{2}+\sigma)\delta_tu^{k+\frac{1}{2}}+(\frac{1}{2}-\sigma)
\delta_tu^{k-\frac{1}{2}})
\\[0.1in]
\geq \ds \frac{1}{2}\sum_{l=0}^{k-1}c_l^{(k,\alpha)}\big(\|\delta_tu^{k-l+\frac{1}{2}}\|^2
-\|\delta_tu^{k-l-\frac{1}{2}}\|^2\big).
\end{array}\end{equation}
\end{lemma}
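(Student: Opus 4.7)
The strategy is to apply the polarization identity $2(a-b,c)=\|a\|^2-\|b\|^2+\|b-c\|^2-\|a-c\|^2$ termwise to the left-hand side, cancel the right-hand side exactly, and then control the residual via Abel summation together with the coefficient properties established in Lemma \ref{mono}.

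Set $w^{j+\frac{1}{2}}:=\delta_tu^{j+\frac{1}{2}}$, $v_\star:=(\tfrac{1}{2}+\sigma)w^{k+\frac{1}{2}}+(\tfrac{1}{2}-\sigma)w^{k-\frac{1}{2}}$, and $g_j:=\|w^{j+\frac{1}{2}}-v_\star\|^2$ for $j=0,1,\dots,k$. Re-indexing by $j=k-l$, each summand of the LHS becomes $c_{k-j}^{(k,\alpha)}(w^{j+\frac{1}{2}}-w^{j-\frac{1}{2}},v_\star)$ and, after polarization, produces exactly $\tfrac{1}{2}c_{k-j}^{(k,\alpha)}(\|w^{j+\frac{1}{2}}\|^2-\|w^{j-\frac{1}{2}}\|^2)+\tfrac{1}{2}c_{k-j}^{(k,\alpha)}(g_{j-1}-g_j)$. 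Summing and subtracting the RHS leaves
\[
2(\text{LHS}-\text{RHS})=\sum_{j=1}^{k}c_{k-j}^{(k,\alpha)}(g_{j-1}-g_j)=c_{k-1}^{(k,\alpha)}g_0+\sum_{j=1}^{k-1}\bigl(c_{k-j-1}^{(k,\alpha)}-c_{k-j}^{(k,\alpha)}\bigr)g_j-c_0^{(k,\alpha)}g_k,
\]
where the second equality is Abel summation. By the strict monotonicity \eqref{coeff}, every term on the right is non-negative except the last.

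A direct computation from the definition of $v_\star$ yields $g_{k-1}=(\tfrac{1}{2}+\sigma)^2\|w^{k+\frac{1}{2}}-w^{k-\frac{1}{2}}\|^2$ and $g_k=(\tfrac{1}{2}-\sigma)^2\|w^{k+\frac{1}{2}}-w^{k-\frac{1}{2}}\|^2$. Thus for $k\ge 2$ it suffices to dominate $c_0^{(k,\alpha)}g_k$ by the single adjacent Abel contribution $(c_0^{(k,\alpha)}-c_1^{(k,\alpha)})g_{k-1}$, which using $(\tfrac{1}{2}+\sigma)^2-(\tfrac{1}{2}-\sigma)^2=2\sigma$ is equivalent to the scalar inequality $8\sigma c_0^{(k,\alpha)}\ge(1+2\sigma)^2 c_1^{(k,\alpha)}$. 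Doubling \eqref{positive-1} gives $8\sigma c_0^{(k,\alpha)}>2(1+2\sigma)c_1^{(k,\alpha)}$, and since $\sigma=1-\alpha/2\in(0,\tfrac{1}{2})$ the factorization $2(1+2\sigma)-(1+2\sigma)^2=(1+2\sigma)(1-2\sigma)>0$ yields $2(1+2\sigma)>(1+2\sigma)^2$; chaining these two strict inequalities closes the estimate. For the edge case $k=1$ the middle Abel sum is empty and the problem reduces directly to $g_0\ge g_1$, which is immediate from $(\tfrac{1}{2}+\sigma)^2>(\tfrac{1}{2}-\sigma)^2$.

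The main obstacle is the boundary comparison: the lone negative term $-c_0^{(k,\alpha)}g_k$ must be absorbed into the adjacent positive Abel contribution, and this is precisely why the non-monotonic coefficient inequality \eqref{positive-1} is isolated in Lemma \ref{mono}. The argument relies essentially on $\sigma<\tfrac{1}{2}$ (equivalently $\alpha>1$); in the sub-diffusion range $\sigma>\tfrac{1}{2}$ the step $2(1+2\sigma)>(1+2\sigma)^2$ reverses, which reflects the genuinely different structural difficulty between the fractional hyperbolic regime $\alpha\in(1,2)$ and the sub-diffusion regime $\alpha\in(0,1)$.
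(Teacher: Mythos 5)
Your proof is correct, and it takes a genuinely different route from the paper's. The paper follows Alikhanov's two-step template: it first asserts two separate inequalities \eqref{positive2} and \eqref{positive1} --- one testing against $\delta_tu^{k+\frac{1}{2}}$ with a positive quadratic correction $\frac{1}{2c_0^{(k,\alpha)}}\big\|\sum_{l}c_l^{(k,\alpha)}(\delta_tu^{k-l+\frac{1}{2}}-\delta_tu^{k-l-\frac{1}{2}})\big\|^2$, one testing against $\delta_tu^{k-\frac{1}{2}}$ with the negative correction $-\frac{1}{2(c_0^{(k,\alpha)}-c_1^{(k,\alpha)})}\big\|\cdot\big\|^2$ (both stated without proof, with a pointer to Alikhanov) --- and then forms the $(\frac{1}{2}+\sigma)$, $(\frac{1}{2}-\sigma)$ combination, where \eqref{positive-1} is precisely the condition $(\frac{1}{2}+\sigma)(c_0^{(k,\alpha)}-c_1^{(k,\alpha)})\geq(\frac{1}{2}-\sigma)c_0^{(k,\alpha)}$ that makes the net correction nonnegative. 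You instead polarize the combined inner product directly against the fixed vector $v_\star$ and Abel-sum the telescoping residual, which localizes all the difficulty in the single boundary comparison $(c_0^{(k,\alpha)}-c_1^{(k,\alpha)})g_{k-1}\geq c_0^{(k,\alpha)}g_k$, i.e. $8\sigma c_0^{(k,\alpha)}\geq(1+2\sigma)^2c_1^{(k,\alpha)}$; this is nominally stronger than \eqref{positive-1} but, as you observe, follows from it because $(1+2\sigma)^2<2(1+2\sigma)$ for $\sigma\in(0,\frac{1}{2})$. Your computations all check out: the polarization identity, the Abel rearrangement, the values $g_{k-1}=(\frac{1}{2}+\sigma)^2\|\delta_tu^{k+\frac{1}{2}}-\delta_tu^{k-\frac{1}{2}}\|^2$ and $g_k=(\frac{1}{2}-\sigma)^2\|\delta_tu^{k+\frac{1}{2}}-\delta_tu^{k-\frac{1}{2}}\|^2$, the use of \eqref{coeff} to discard the interior Abel terms, and the $k=1$ edge case. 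What your route buys is a fully self-contained elementary argument (the paper's Steps 1 and 2 are themselves nontrivial lemmas left unproved here) that makes transparent exactly where \eqref{coeff}, \eqref{positive-1} and the restriction $\sigma<\frac{1}{2}$ enter; what the paper's route buys is reuse of the standard Alikhanov machinery, whose intermediate inequalities retain an extra quadratic term that can be exploited in other estimates.
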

\begin{proof}
Borrowing the idea in \cite{alikhanov}, we can prove this lemma in three steps as follows.

\textbf{Step 1.} \begin{equation}\label{positive2}\begin{array}{l}
\ds \sum_{l=0}^{k-1}c_l^{(k,\alpha)}(\delta_tu^{k-l+\frac{1}{2}}
-\delta_tu^{k-l-\frac{1}{2}},\,\delta_tu^{k+\frac{1}{2}})\geq\\
\ds\frac{1}{2}\sum_{l=0}^{k-1}c_l^{(k,\alpha)}\big(\|\delta_tu^{k-l+\frac{1}{2}}\|^2
-\|\delta_tu^{k-l-\frac{1}{2}}\|^2\big)+\frac{1}{2c_0^{(k,\alpha)}}
\Big\|\sum_{l=0}^{k-1}c_l^{(k,\alpha)}
(\delta_tu^{k-l+\frac{1}{2}}-\delta_tu^{k-l-\frac{1}{2}})\Big\|^2.
\end{array}\end{equation}

\textbf{Step 2.} \begin{equation}\label{positive1}\begin{array}{l}
\ds\sum_{l=0}^{k-1}c_l^{(k,\alpha)}(\delta_tu^{k-l+\frac{1}{2}}
-\delta_tu^{k-l-\frac{1}{2}},\,
\delta_tu^{k-\frac{1}{2}})
\geq\frac{1}{2}\sum_{l=0}^{k-1}c_l^{(k,\alpha)}
\big(\|\delta_tu^{k-l+\frac{1}{2}}\|^2
-\|\delta_tu^{k-l-\frac{1}{2}}\|^2\big)\\
\ds-\frac{1}{2(c_0^{(k,\alpha)}-c_1^{(k,\alpha)})}
\Big\|\sum_{l=0}^{k-1}c_l^{(k,\alpha)}
(\delta_tu^{k-l+\frac{1}{2}}-\delta_tu^{k-l-\frac{1}{2}})\Big\|^2.
\end{array}\end{equation}

\textbf{Step 3.} Multiplying \eqref{positive2} by $(\frac{1}{2}+\sigma)$ and \eqref{positive1} by $(\frac{1}{2}-\sigma),$ adding the results, and noticing \eqref{positive-1}, we obtain \eqref{positive3}. The proof is ended.
\end{proof}

Now, we present the stability of the difference scheme.
\begin{theorem}\label{the1} The finite difference scheme \eqref{scheme-eq2}-\eqref{scheme-bz} is unconditionally stable.
Namely, the solution $\{u_{i}^k\,|\,0\leq i\leq M,~1\leq k\leq N\}$ can be bounded by
\begin{equation}\label{Stab-main1}\begin{array}{l}
\ds\hspace{0.15in}\frac{1}{2}(1-\frac{\alpha}{3})|u^1|_1^2
+\frac{1}{\Gamma(3-\alpha)}t_{1-\frac{\alpha}{3}}^{2-\alpha}
\|\delta_tu^\frac{1}{2}\|^2\\[0.1in]
\leq\ds\frac{2}{\Gamma(3-\alpha)}t_{1-\frac{\alpha}{3}}^{2-\alpha}\|\psi\|^2
+\big[1+\frac{\alpha^2}{3(3-\alpha)}\big]|u^0|_1^2
+\frac{\Gamma(3-\alpha)\tau^\alpha}{2(1-\alpha/3)^{2-\alpha}}
\|f^{1-\frac{\alpha}{3}}\|^2,
\end{array}\end{equation}
and,
\begin{equation}\label{stab-main2}
\frac{3T^{1-\alpha}\tau}{16\Gamma(2-\alpha)}
\sum_{l=1}^k\|\delta_tu^{l+\frac{1}{2}}\|^2+\Big|(\frac{1}{2}
+\sigma)u^{k+1}+(\frac{1}{2}-\sigma)u^{k}\Big|_1^2\leq c_3G_k,
\quad 1\leq k\leq N-1,
\end{equation}
where $c_3$ is a positive constant and
\[G_k=\|\psi\|^2+|u^0|_1^2+\tau\|\delta_t^2u^0\|^2+\tau^\alpha
\|f^{1-\frac{\alpha}{3}}\|^2
+\tau\sum_{m=1}^k\|f^{m+\sigma}\|^2,\quad 1\leq k\leq N-1.\]

Furthermore, we have
\begin{equation}\label{u-hat1}\begin{array}{l}
|\widehat{u}^{k+1}|_1^2\leq\ds 2(\frac{3}{2}-\sigma)^2\Big|(\frac{1}{2}+\sigma)u^{k+1}
+(\frac{1}{2}-\sigma)u^{k}\Big|_1^2
+2(\frac{1}{2}-\sigma)^2\Big|(\frac{1}{2}+\sigma)u^{k}+(\frac{1}{2}
-\sigma)u^{k-1}\Big|_1^2\\[0.2in]
\qquad\quad\;\leq\ds\frac{c_3}{2}(9G_k+G_{k-1})\leq 5c_3G_k,\quad 1\leq k\leq N-1,
\end{array}\end{equation}
in which $\widehat{u}^{k+1}$ can be referred to \eqref{uhat}.
\end{theorem}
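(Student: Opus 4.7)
Overall, the three bounds are obtained by discrete energy estimates. For \eqref{Stab-main1} I would test the initial-step scheme \eqref{scheme-eq2} against a multiple of $u^1-u^0$. For \eqref{stab-main2} I would test the main scheme \eqref{scheme-eq} against $w^k:=(\tfrac12+\sigma)\delta_tu^{k+\frac12}+(\tfrac12-\sigma)\delta_tu^{k-\frac12}$ and invoke Lemma~\ref{positive}. The last bound \eqref{u-hat1} is a direct consequence of the definition \eqref{uhat} together with \eqref{stab-main2}.

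To derive \eqref{Stab-main1} I substitute $\delta_t^2u^0=\tfrac{2}{\tau}(\delta_tu^{\frac12}-\psi)$ into \eqref{scheme-eq2} and take the inner product with $2(u^1-u^0)=2\tau\delta_tu^{\frac12}$. Discrete summation by parts together with the algebraic identity
\begin{equation*}
2\bigl\langle(1-\tfrac\alpha3)u^1+\tfrac\alpha3 u^0,\,u^1-u^0\bigr\rangle=|u^1|_1^2-|u^0|_1^2+\tfrac{3-2\alpha}{3}|u^1-u^0|_1^2
\end{equation*}
produces an energy identity. Young's inequality applied to $(\psi,\delta_tu^{\frac12})$ and to $(f^{1-\alpha/3},u^1-u^0)$, with parameters chosen to retain the coefficient $\tfrac{t_{1-\alpha/3}^{2-\alpha}}{\Gamma(3-\alpha)}$ on $\|\delta_tu^{\frac12}\|^2$, produces the stated constants $\tfrac{2t_{1-\alpha/3}^{2-\alpha}}{\Gamma(3-\alpha)}$ on $\|\psi\|^2$ and $\tfrac{\Gamma(3-\alpha)\tau^\alpha}{2(1-\alpha/3)^{2-\alpha}}$ on $\|f^{1-\alpha/3}\|^2$. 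When $\alpha\ge\tfrac32$ the signed term $\tfrac{3-2\alpha}{3}|u^1-u^0|_1^2$ is negative; expanding $|u^1-u^0|_1^2=|u^1|_1^2-2\langle u^1,u^0\rangle+|u^0|_1^2$ and applying a weighted Cauchy--Schwarz bound $|\langle u^1,u^0\rangle|\le\tfrac{\eta}{2}|u^1|_1^2+\tfrac{1}{2\eta}|u^0|_1^2$ with the specific choice $\eta=\tfrac{3(3-\alpha)}{2(2\alpha-3)}$ will reduce the left-hand coefficient of $|u^1|_1^2$ to $\tfrac12(1-\tfrac\alpha3)$ and generate the coefficient $1+\tfrac{\alpha^2}{3(3-\alpha)}$ on $|u^0|_1^2$; for $\alpha<\tfrac32$ the signed term is nonnegative and is simply dropped.

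For \eqref{stab-main2} I set $V^k:=(\tfrac12+\sigma)u^{k+1}+(\tfrac12-\sigma)u^k$ so that $w^k=\delta_tV^{k-\frac12}$ and the spatial combination in \eqref{scheme-eq} equals $\delta_x^2\tfrac{V^k+V^{k-1}}{2}$. Taking the inner product of \eqref{scheme-eq} with $w^k$ and using discrete summation by parts together with $\langle A+B,A-B\rangle=|A|_1^2-|B|_1^2$ collapses the spatial part to $-\tfrac{1}{2\tau}(|V^k|_1^2-|V^{k-1}|_1^2)$. I split the fractional sum at $l=k$ to isolate the $\psi$ contribution through $\delta_t^2u^0=\tfrac{2}{\tau}(\delta_tu^{\frac12}-\psi)$ and apply Lemma~\ref{positive} to the remaining terms $l=0,\dots,k-1$; multiplying by $\tau$ and summing over $k=1,\dots,K$ telescopes the spatial energy to $\tfrac12(|V^K|_1^2-|V^0|_1^2)$. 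The boundary inner products $(\delta_tu^{\frac12}-\psi,w^k)$ and $(f^{k+\sigma},w^k)$ are estimated by Cauchy--Schwarz using $\|w^k\|\le\|\delta_tu^{k+\frac12}\|+\|\delta_tu^{k-\frac12}\|$ and $\|\delta_tu^{\frac12}-\psi\|=\tfrac\tau2\|\delta_t^2u^0\|$. The accumulated $\|\delta_tu^{m+\frac12}\|^2$ contributions are absorbed on the left using the lower bound $c_0^{(k,\alpha)}>c_k^{(k,\alpha)}>\tfrac{3}{8}T^{1-\alpha}\tau$, which follows from the monotonicity \eqref{coeff} combined with the explicit bound on $c_k^{(k,\alpha)}$, producing precisely $\tfrac{3T^{1-\alpha}\tau}{16\Gamma(2-\alpha)}\sum_{l=1}^k\|\delta_tu^{l+\frac12}\|^2$ on the left; the sum estimates \eqref{sum} and \eqref{sum1} of Lemma~\ref{mono} then bound the residual data terms and yield the $c_3G_k$ bound on the right.

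The main technical obstacle lies in this summation-over-$k$ step. Lemma~\ref{positive} provides, at each fixed $k$, a telescoping lower bound with weights $c_l^{(k,\alpha)}$ that themselves depend on $k$, so interchanging the order of summation leaves residual cross sums of the form $\sum_{k>m}\bigl(c_{k-m-1}^{(k,\alpha)}-c_{k-m}^{(k,\alpha)}\bigr)\|\delta_tu^{m+\frac12}\|^2$ whose sign (nonpositive by \eqref{coeff}) and cumulative magnitude must be carefully tracked; this is where the estimates in Lemma~\ref{mono} enter essentially and where a discrete Gronwall-type argument may be needed. Once \eqref{stab-main2} is in hand, \eqref{u-hat1} follows directly from \eqref{uhat}: rewriting $\widehat u^{k+1}=(\tfrac32-\sigma)V^k-(\tfrac12-\sigma)V^{k-1}$ and applying $|a-b|_1^2\le2|a|_1^2+2|b|_1^2$ together with $2(\tfrac32-\sigma)^2\le\tfrac92$ and $2(\tfrac12-\sigma)^2\le\tfrac12$ (valid for $\sigma\in(0,\tfrac12)$) yields the first inequality, and using \eqref{stab-main2} at indices $k$ and $k-1$ together with $G_{k-1}\le G_k$ yields the second.
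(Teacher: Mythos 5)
Your overall architecture matches the paper's: test \eqref{scheme-eq2} against (a multiple of) $u^1-u^0$ for \eqref{Stab-main1}, test \eqref{scheme-eq} against $w^k=(\tfrac12+\sigma)\delta_tu^{k+\frac12}+(\tfrac12-\sigma)\delta_tu^{k-\frac12}$ and invoke Lemma~\ref{positive} for \eqref{stab-main2}, and read \eqref{u-hat1} off \eqref{uhat}. Your treatment of the spatial energy (writing $w^k=\delta_tV^{k-\frac12}$ and collapsing to $\tfrac{1}{2\tau}(|V^k|_1^2-|V^{k-1}|_1^2)$) and your derivation of \eqref{u-hat1} are exactly what the paper does, and your route to \eqref{Stab-main1} via the identity $2\langle(1-\tfrac\alpha3)u^1+\tfrac\alpha3 u^0,u^1-u^0\rangle=|u^1|_1^2-|u^0|_1^2+\tfrac{3-2\alpha}{3}|u^1-u^0|_1^2$ is a legitimate (if slightly different) alternative to the paper's direct weighted Cauchy--Schwarz on the cross terms, though with the factor $2(u^1-u^0)$ normalization you will not recover the stated $\|f^{1-\alpha/3}\|^2$ coefficient while also retaining $\tfrac{t_{1-\alpha/3}^{2-\alpha}}{\Gamma(3-\alpha)}\|\delta_tu^{1/2}\|^2$; testing against $u^1-u^0$ fixes this bookkeeping.

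The genuine gap is in the summation-over-$k$ step for \eqref{stab-main2}, which you explicitly leave open (``a discrete Gronwall-type argument may be needed''). There are in fact no residual cross sums to track and no Gronwall argument is needed: the resolution is the observation that the interior coefficients are translation invariant in time, i.e.\ $c_l^{(k,\alpha)}=a_{k-l}^{(k,\alpha)}+b_{k-l}^{(k,\alpha)}$ depends on $k$ and $l$ only through $l$ when $0\le l\le k-2$ (both $a_j^{(k,\alpha)}$ for $j\ge2$ and $b_j^{(k,\alpha)}$ for $j\le k-1$ are functions of $k-j$ alone), so that $c_l^{(k,\alpha)}-c_l^{(k-1,\alpha)}=0$ for $0\le l\le k-2$. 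Consequently the energy $F^k=\tfrac{1}{\Gamma(2-\alpha)}\sum_{l=0}^{k-1}c_l^{(k,\alpha)}\|\delta_tu^{k-l+\frac12}\|^2+|V^k|_1^2$ satisfies the exact recursion $F^k\le F^{k-1}+\tfrac{1}{\Gamma(2-\alpha)}c_{k-1}^{(k,\alpha)}\|\delta_tu^{\frac12}\|^2+(\text{data terms})$, and only the single new term $c_{k-1}^{(k,\alpha)}\|\delta_tu^{1/2}\|^2$ accumulates, which is exactly what \eqref{sum} is designed to control (with \eqref{sum1} handling the $c_m^{(m,\alpha)}\delta_t^2u^0$ contributions). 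Note also that the residual you wrote down, $\sum_{k>m}\bigl(c_{k-m-1}^{(k,\alpha)}-c_{k-m}^{(k,\alpha)}\bigr)\|\delta_tu^{m+\frac12}\|^2$, is not the one that arises (the actual candidate residual compares the same index $l$ at consecutive $k$, namely $c_l^{(k,\alpha)}-c_l^{(k-1,\alpha)}$), and its sign under \eqref{coeff} would be nonnegative rather than nonpositive. Without the identity $c_l^{(k,\alpha)}=c_l^{(k-1,\alpha)}$ for $l\le k-2$, your argument does not close.
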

\begin{proof}
\textbf{Step 1.} Taking the inner product of \eqref{scheme-eq2} with $\delta_tu^\frac{1}{2},$ and with the help of the summation by parts, we obtain
\begin{align*}
&\frac{2}{\Gamma(3-\alpha)}t_{1-\frac{\alpha}{3}}^{2-\alpha}
(\delta_tu^{\frac{1}{2}}-\psi,\delta_tu^\frac{1}{2})\\
=&-(1-\frac{\alpha}{3})(\delta_xu^1,\,\delta_xu^1-\delta_xu^0)
-\frac{\alpha}{3}(\delta_xu^0,\,\delta_xu^1-\delta_xu^0)
+\tau(f^{1-\frac{\alpha}{3}},\delta_tu^\frac{1}{2}).
\end{align*}
By the Cauchy-Schwarz inequality, it yields that
\begin{align*}
&(1-\frac{\alpha}{3})|u^1|_1^2+\frac{2}{\Gamma(3-\alpha)}
t_{1-\frac{\alpha}{3}}^{2-\alpha}
\|\delta_tu^\frac{1}{2}\|^2\\
\leq&\frac{1}{\Gamma(3-\alpha)}t_{1-\frac{\alpha}{3}}^{2-\alpha}\big(2\|\psi\|^2
+\frac{1}{2}\|\delta_tu^\frac{1}{2}\|^2\big)+(1-\frac{\alpha}{3})
\big(\frac{1}{4}|u^1|_1^2+|u^0|_1^2\big)
+\frac{\alpha}{3}\Big(\frac{3-\alpha}{4\alpha}|u^1|_1^2
+\frac{\alpha}{3-\alpha}|u^0|_1^2\Big)
\\
&+\frac{\alpha}{3}|u^0|_1^2+\Big(\frac{\Gamma(3-\alpha)\tau^2}
{2t_{1-\alpha/3}^{2-\alpha}}\|f^{1-\frac{\alpha}{3}}\|^2
+\frac{1}{2\Gamma(3-\alpha)}t_{1-\frac{\alpha}{3}}^{2-\alpha}
\|\delta_tu^\frac{1}{2}\|^2\Big),
\end{align*}
which results in
\begin{equation}\label{stab-main1}\begin{array}{l}
\ds\hspace{0.15in}\frac{1}{2}(1-\frac{\alpha}{3})|u^1|_1^2
+\frac{1}{\Gamma(3-\alpha)}t_{1-\frac{\alpha}{3}}^{2-\alpha}
\|\delta_tu^\frac{1}{2}\|^2
\\[0.1in]
\leq\ds\frac{2}{\Gamma(3-\alpha)}t_{1-\frac{\alpha}{3}}^{2-\alpha}\|\psi\|^2
+\big[1+\frac{\alpha^2}{3(3-\alpha)}\big]|u^0|_1^2
+\frac{\Gamma(3-\alpha)\tau^\alpha}{2(1-\alpha/3)^{2-\alpha}}
\|f^{1-\frac{\alpha}{3}}\|^2.
\end{array}\end{equation}
\textbf{Step 2.} Taking the inner product of \eqref{scheme-eq} with $(\frac{1}{2}+\sigma)\delta_tu^{k+\frac{1}{2}}+(\frac{1}{2}-\sigma)
\delta_tu^{k-\frac{1}{2}},$
 we have
 \begin{align*}
 &\frac{1}{\Gamma(2-\alpha)\tau}\sum_{l=0}^{k-1}c_{l}^{(k,\alpha)}
 \Big(\delta_tu^{k-l+\frac{1}{2}}
 -\delta_tu^{k-l-\frac{1}{2}},\,(\frac{1}{2}+\sigma)\delta_tu^{k+\frac{1}{2}}
 +(\frac{1}{2}-\sigma)\delta_tu^{k-\frac{1}{2}}\Big)
 \\
 &+\frac{1}{\Gamma(2-\alpha)}\Big(c_{k}^{(k,\alpha)}\delta_t^2u^0,
 \,(\frac{1}{2}+\sigma)\delta_tu^{k+\frac{1}{2}}+(\frac{1}{2}
 -\sigma)\delta_tu^{k-\frac{1}{2}}\Big)
 \\
 =&\Big((\frac{1}{2}+\sigma)\delta_x^2u^{k+\frac{1}{2}}+(\frac{1}{2}-\sigma)
 \delta_x^2u^{k-\frac{1}{2}},\,
 (\frac{1}{2}+\sigma)\delta_tu^{k+\frac{1}{2}}+(\frac{1}{2}+\sigma)
 \delta_tu^{k-\frac{1}{2}}\Big)\\
 &+(f^{k+\sigma},\,(\frac{1}{2}+\sigma)\delta_tu^{k+\frac{1}{2}}
 +(\frac{1}{2}-\sigma)\delta_tu^{k-\frac{1}{2}}),
\quad 1\leq k\leq N-1.
\end{align*}
By Lemma \ref{positive} and the summation by parts, we get
\begin{align*}
 &\frac{1}{2\Gamma(2-\alpha)\tau}\sum_{l=0}^{k-1}c_{l}^{(k,\alpha)}
 \Big(\|\delta_tu^{k-l+\frac{1}{2}}\|^2
 -\|\delta_tu^{k-l-\frac{1}{2}}\|^2\Big)
 \\
 &+\frac{1}{2\tau}\Big[\big|(\frac{1}{2}+\sigma)u^{k+1}
 +(\frac{1}{2}-\sigma)u^{k}\big|_1^2
 -\big|(\frac{1}{2}+\sigma)u^{k}+(\frac{1}{2}-\sigma)u^{k-1}\big|_1^2\Big]
 \\
 \leq&-\frac{1}{\Gamma(2-\alpha)}\Big(c_{k}^{(k,\alpha)}\delta_t^2u^0,
 \,(\frac{1}{2}+\sigma)
 \delta_tu^{k+\frac{1}{2}}+(\frac{1}{2}-\sigma)\delta_tu^{k-\frac{1}{2}}\Big)
 \\
 &+(f^{k+\sigma},\,(\frac{1}{2}+\sigma)\delta_tu^{k+\frac{1}{2}}
 +(\frac{1}{2}-\sigma)\delta_tu^{k-\frac{1}{2}}),
 \quad 1\leq k\leq N-1.
 \end{align*}
 Further, we have
 \begin{align*}
 &\ds \frac{1}{\Gamma(2-\alpha)}\sum_{l=0}^{k-1}c_{l}^{(k,\alpha)}
 \|\delta_tu^{k-l+\frac{1}{2}}\|^2
 +\big|(\frac{1}{2}+\sigma)u^{k+1}+(\frac{1}{2}-\sigma)u^{k}\big|_1^2\\
 \leq&\frac{1}{\Gamma(2-\alpha)}\Big[\sum_{l=0}^{k-2}c_{l}^{(k-1,\alpha)}
 \|\delta_tu^{k-l-\frac{1}{2}}\|^2+c_{k-1}^{(k,\alpha)}\|\delta_tu^{\frac{1}{2}}\|^2
 +\sum_{l=0}^{k-2}\big(c_{l}^{(k,\alpha)}-c_{l}^{(k-1,\alpha)}\big)
 \|\delta_tu^{k-l-\frac{1}{2}}\|^2\Big]\\
 &+\big|(\frac{1}{2}+\sigma)u^{k}+(\frac{1}{2}-\sigma)u^{k-1}\big|_1^2
 -\frac{2\tau}{\Gamma(2-\alpha)}\Big(c_{k}^{(k,\alpha)}\delta_t^2u^0,
 \,(\frac{1}{2}+\sigma)
 \delta_tu^{k+\frac{1}{2}}+(\frac{1}{2}-\sigma)\delta_tu^{k-\frac{1}{2}}\Big)\\
 &\ds+2\tau(f^{k+\sigma},\,(\frac{1}{2}+\sigma)\delta_tu^{k+\frac{1}{2}}
 +(\frac{1}{2}-\sigma)\delta_tu^{k-\frac{1}{2}}),
 \quad 1\leq k\leq N-1.
 \end{align*}
Through $\{c_l^{(k,\alpha)}\,|_{l=0}^{k-2}\}$ defined in \eqref{c-coe}, one gets $c_{l}^{(k,\alpha)}-c_{l}^{(k-1,\alpha)}=0\,\,(0\leq l\leq k-2).$

Denote
 \[F^k=\frac{1}{\Gamma(2-\alpha)}\sum_{l=0}^{k-1}c_{l}^{(k,\alpha)}
 \|\delta_tu^{k-l+\frac{1}{2}}\|^2
 +\big|(\frac{1}{2}+\sigma)u^{k+1}+(\frac{1}{2}-\sigma)u^{k}\big|_1^2,\quad 1\leq k\leq N-1.\]
 Then we have
 \begin{equation}\label{recur}\begin{array}{l}
 F^k\leq\ds F^{k-1}+\frac{1}{\Gamma(2-\alpha)}c_{k-1}^{(k,\alpha)}\|\delta_tu^{\frac{1}{2}}\|^2
 \\[0.15in]
 \ds -\frac{2\tau}{\Gamma(2-\alpha)}\Big(c_{k}^{(k,\alpha)}
 \delta_t^2u^0,\,(\frac{1}{2}+\sigma)
 \delta_tu^{k+\frac{1}{2}}+(\frac{1}{2}-\sigma)
 \delta_tu^{k-\frac{1}{2}}\Big)\\[0,15in]
 \ds +2\tau(f^{k+\sigma},\,(\frac{1}{2}+\sigma)
 \delta_tu^{k+\frac{1}{2}}
 +(\frac{1}{2}-\sigma)\delta_tu^{k-\frac{1}{2}}),
 \quad 1\leq k\leq N-1.
 \end{array}\end{equation}
Replacing $k$ by $m$ in \eqref{recur} and summing up for $m$ from $1$ to $k$ yield that
\begin{equation}\label{recur1}\begin{array}{l}
 F^k\leq\ds F^{0}+\frac{1}{\Gamma(2-\alpha)}\sum_{m=1}^kc_{m-1}^{(m,\alpha)}
 \|\delta_tu^{\frac{1}{2}}\|^2
 \\
 \ds\qquad\quad-\frac{2\tau}{\Gamma(2-\alpha)}\sum_{m=1}^k
 \Big(c_{m}^{(m,\alpha)}\delta_t^2u^0,\,(\frac{1}{2}+\sigma)
 \delta_tu^{m+\frac{1}{2}}+(\frac{1}{2}-\sigma)\delta_tu^{m-\frac{1}{2}}\Big)\\
 \ds\qquad\quad+2\tau\sum_{m=1}^k(f^{m+\sigma},\,(\frac{1}{2}+\sigma)
 \delta_tu^{m+\frac{1}{2}}+(\frac{1}{2}-\sigma)\delta_tu^{m-\frac{1}{2}}),
 \quad 1\leq k\leq N-1,
 \end{array}\end{equation}
 where
 \begin{equation*}
 F^0=\big|(\frac{1}{2}+\sigma)u^1+(\frac{1}{2}-\sigma)u^0\big|_1^2\leq 2(\frac{1}{2}+\sigma)^2|u^1|_1^2+2(\frac{1}{2}-\sigma)^2|u^0|_1^2.
 \end{equation*}
 It is known that $F^0$ is bounded (see \eqref{stab-main1}).
 By Lemma \ref{mono}, it implies that
 \[F^k>\frac{3T^{1-\alpha}\tau}{8\Gamma(2-\alpha)}\sum_{l=1}^k
 \|\delta_tu^{l+\frac{1}{2}}\|^2
 +\big|(\frac{1}{2}+\sigma)u^{k+1}+(\frac{1}{2}-\sigma)u^{k}\big|_1^2,\quad 1\leq k\leq N-1.\]
 Using the Cauchy-Schwarz inequality, we obtain
 \begin{align*}
 &\frac{3T^{1-\alpha}\tau}{8\Gamma(2-\alpha)}\sum_{l=1}^k\|
 \delta_tu^{l+\frac{1}{2}}\|^2
 +\big|(\frac{1}{2}+\sigma)u^{k+1}+(\frac{1}{2}-\sigma)u^{k}\big|_1^2\notag
 \\
 \leq&\big|(\frac{1}{2}+\sigma)u^1+(\frac{1}{2}-\sigma)u^0\big|_1^2
 +\frac{1}{\Gamma(2-\alpha)}\sum_{m=1}^kc_{m-1}^{(m,\alpha)}\|
 \delta_tu^{\frac{1}{2}}\|^2
 \notag\\
 &+\frac{2\tau}{\Gamma(2-\alpha)}\Big[\frac{64}{3T^{1-\alpha}}
 \sum_{m=1}^k(c_m^{(m,\alpha)})^2\|\delta_t^2u^0\|^2
 +\frac{3T^{1-\alpha}}{64}\sum_{m=1}^k\|\delta_tu^{m+\frac{1}{2}}\|^2\Big]\notag\\
 &+2\tau\sum_{m=1}^k\Big[\frac{64\Gamma(2-\alpha)}{3T^{1-\alpha}}\|f^{m+\sigma}\|^2
 +\frac{3T^{1-\alpha}}{64\Gamma(2-\alpha)}\|\delta_tu^{m+\frac{1}{2}}\|^2\Big]
 +\frac{3T^{1-\alpha}\tau}{32\Gamma(2-\alpha)}\|\delta_tu^\frac{1}{2}\|^2,\quad 1\leq k\leq N-1.
 \end{align*}
 It follows that
 \begin{equation*}
 \begin{array}{l}
 \ds \hspace{0.15in}\frac{3T^{1-\alpha}\tau}{16\Gamma(2-\alpha)}\sum_{l=1}^k
 \|\delta_tu^{l+\frac{1}{2}}\|^2+\big|(\frac{1}{2}+\sigma)u^{k+1}
 +(\frac{1}{2}-\sigma)u^{k}\big|_1^2
\\[0.2in]
\leq\ds\big|(\frac{1}{2}+\sigma)u^1+(\frac{1}{2}-\sigma)u^0\big|_1^2
 +\frac{1}{\Gamma(2-\alpha)}\sum_{m=1}^kc_{m-1}^{(m,\alpha)}
 \|\delta_tu^{\frac{1}{2}}\|^2
 \\[0.2in]
 \ds\hspace{0.15in}+\frac{128\tau}{3\Gamma(2-\alpha)T^{1-\alpha}}
 \sum_{m=1}^k\big(c_m^{(m,\alpha)}\big)^2\|\delta_t^2u^0\|^2
 +\frac{128\Gamma(2-\alpha)\tau}{3T^{1-\alpha}}\sum_{m=1}^k\|f^{m+\sigma}\|^2
 +\frac{3T^{1-\alpha}\tau}{32\Gamma(2-\alpha)}\|\delta_tu^\frac{1}{2}\|^2.
 \end{array}\end{equation*}
 Utilizing \eqref{sum} and \eqref{sum1}, combining the above inequality with \eqref{stab-main1} yields \eqref{stab-main2}.
 It immediately follows from \eqref{stab-main2} that \eqref{u-hat1} holds true. All this ends the proof.
\end{proof}
\begin{remark}\label{L2norm}
(1) With the help of the Cauchy-Schwarz inequality, it follows from \eqref{Stab-main1} and \eqref{stab-main2} that
\begin{equation*}
\begin{array}{l}
\ds \quad\|u^{k+1}\|^2=\|\tau\sum_{l=1}^k\delta_tu^{l+\frac{1}{2}}+u^1\|^2\leq 2\|u^1\|^2+2\tau^2\|\sum_{l=1}^k\delta_tu^{l+\frac{1}{2}}\|^2\\
\leq\ds 2\|u^1\|^2+2T\cdot\tau\sum_{l=1}^k\|\delta_tu^{l+\frac{1}{2}}\|^2
\leq\frac{L^2}{3}|u^1|_1^2+2T\tau\sum_{l=1}^k
\|\delta_tu^{l+\frac{1}{2}}\|^2\leq c_4G_k,\quad 1\leq k\leq N-1,
\end{array}\end{equation*}
where $c_4$ is a positive constant. Although both $u^{k+1}$ and $\widehat{u}^{k+1}$ approximate $U^{k+1}$ with the second-order accuracy in time, we prefer to use the latter one since the numerical stability and error estimates are in the sense of $\widehat{u}^{k+1}$ except the first level.

(2) \eqref{u-hat1} can be reduced to
\begin{equation*}
\begin{array}{l}
|\widehat{u}^{k+1}|_1\leq\ds (\frac{3}{2}-\sigma)\Big|(\frac{1}{2}+\sigma)u^{k+1}
+(\frac{1}{2}-\sigma)u^{k}\Big|_1
+(\frac{1}{2}-\sigma)\Big|(\frac{1}{2}+\sigma)u^{k}
+(\frac{1}{2}-\sigma)u^{k-1}\Big|_1\\[0.2in]
\qquad\quad\;\,\leq \ds(2-2\sigma)\sqrt{c_3G_k},\quad 1\leq k\leq N-1.
\end{array}\end{equation*}
\end{remark}

Next, we analyze the convergence of the difference scheme.
\begin{theorem}\label{convergence}
Suppose that problem \eqref{eq}-\eqref{bz} has a unique smooth solution $u\in C^{(4,4)}([0,L]\times[0,T])$ and $\{u_i^k\,|\, 0\leq i\leq M,\, 0\leq k\leq N\}$ is the solution of  difference scheme \eqref{scheme-eq2}-\eqref{scheme-bz}.
Let
\begin{equation*}
e_i^k=U_i^k-u_i^k,\quad 0\leq i\leq M,\quad 0\leq k\leq N,\quad\widehat{e}_i^k=
\begin{cases}
\ds U_i^k-\widehat{u}_i^k,\quad &0\leq i\leq M,\quad 2\leq k\leq N, \vspace{1ex}\\
\displaystyle U_i^k-u_i^k,\quad &0\leq i\leq M,\quad k=0,\,1.
\end{cases}
 \end{equation*}
Then, there exists a positive constant $c_5$ such that
\[\|\widehat{e}^{k}\|_\infty\leq c_5(\tau^2+h^2),\quad 0\leq k\leq N.\]
\end{theorem}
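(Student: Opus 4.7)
The plan is to recast the grid error $e_i^k := U_i^k - u_i^k$ as the solution of a system with exactly the algebraic form of the scheme \eqref{scheme-eq2}-\eqref{scheme-bz}, invoke the stability Theorem \ref{the1} to obtain an $H^1$-bound, and then pass to $L^\infty$ via the discrete embedding $\|v\|_\infty \leq \frac{L}{\sqrt{6}}|v|_1$.

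First I would derive the error equations. Subtracting \eqref{scheme-eq2} from \eqref{scheme-eq1} and \eqref{scheme-eq} from \eqref{numerical}, and using that $U$ and $u$ share the initial value $\varphi$ and the initial velocity $\psi$, the error $e$ satisfies exactly scheme \eqref{scheme-eq2}-\eqref{scheme-bz} with $\varphi \equiv 0$, $\psi \equiv 0$, $f_i^{1-\alpha/3}$ replaced by $R_i^0$, and $f_i^{k+\sigma}$ replaced by $R_i^k$. In particular $e^0 = 0$, $e^k_0 = e^k_M = 0$, and $\delta_t^2 e^0 = \tfrac{2}{\tau}\delta_t e^{1/2}$, since the $\psi$-term cancels in the definition of $\delta_t^2 e^0$.

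Next I would bound the data terms. By \eqref{trunction} and \eqref{error1}, $\|R^m\| \leq C(\tau^2+h^2)$ for $1 \leq m \leq N-1$, while $\|R^0\| \leq C(\tau^{4-\alpha}+h^2) \leq C(\tau^2+h^2)$ for $\tau \leq 1$ since $4-\alpha > 2$. Applying \eqref{Stab-main1} to the error yields $|e^1|_1 \leq C(\tau^2+h^2)$ and $\|\delta_t e^{1/2}\|^2 \leq C\tau^{2\alpha-2}\|R^0\|^2$; from $\delta_t^2 e^0 = 2\delta_t e^{1/2}/\tau$ the latter gives $\tau\|\delta_t^2 e^0\|^2 \leq C\tau^{2\alpha-3}\|R^0\|^2$, which, together with the superconvergent form $\|R^0\| \leq C(\tau^{4-\alpha}+h^2)$, is again of order $(\tau^2+h^2)^2$. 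Consequently the data quantity $G_k^{(e)} := \tau\|\delta_t^2 e^0\|^2 + \tau^\alpha\|R^0\|^2 + \tau\sum_{m=1}^k\|R^m\|^2$ satisfies $G_k^{(e)} \leq C(\tau^2+h^2)^2$, and \eqref{u-hat1} applied to $e$ delivers $|\widetilde{e}^{k+1}|_1 \leq C(\tau^2+h^2)$, where $\widetilde{e}^{k+1}$ denotes the combination \eqref{uhat} with $u$ replaced by $e$.

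Finally I would bridge $\widetilde{e}^{k+1}$ and the theorem's $\widehat{e}^{k+1} = U^{k+1} - \widehat{u}^{k+1}$ via Lemma \ref{le3}: the lemma gives $U^{k+1} = \widehat{U}^{k+1} + O(\tau^2)$, where $\widehat{U}^{k+1}$ applies \eqref{uhat} to $U$, and by linearity $\widetilde{e}^{k+1} = \widehat{U}^{k+1} - \widehat{u}^{k+1}$, so $\widehat{e}^{k+1} = \widetilde{e}^{k+1} + O(\tau^2)$ and hence $|\widehat{e}^{k+1}|_1 \leq C(\tau^2+h^2)$ for $k \geq 1$. The boundary cases $k = 0$ (with $\widehat{e}^0 = 0$) and $k = 1$ (with $\widehat{e}^1 = e^1$) are covered directly by \eqref{Stab-main1}, and the embedding then converts the $H^1$-bound into the claimed $L^\infty$-estimate. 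The most delicate step will be the control of $\tau\|\delta_t^2 e^0\|^2$: a crude estimate produces negative powers of $\tau$, and the argument only survives because the starting truncation $R^0$ is one order better than the generic error and carries a $\tau^{4-\alpha}$ factor that exactly offsets the $\tau^{2\alpha-3}$ coefficient arising from $\delta_t^2 e^0 = 2\delta_t e^{1/2}/\tau$.
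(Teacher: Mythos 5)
Your proposal is correct and follows essentially the same route as the paper: form the error system, bound $|e^1|_1$ and $\|\delta_t e^{1/2}\|$ from the first-level equation, feed $R^0$, $R^k$ and $\tau\|\delta_t^2 e^0\|^2$ into the stability estimate of Theorem \ref{the1} to control $\big|(\tfrac12+\sigma)e^{k+1}+(\tfrac12-\sigma)e^{k}\big|_1$, reassemble $\widehat{e}^{k+1}$ via Lemma \ref{le3} at the cost of an extra $O(\tau^2)$, and finish with the discrete embedding. You even isolate the same delicate point the paper glosses over, namely that the $\tau^{2\alpha-3}$ factor in $\tau\|\delta_t^2 e^0\|^2$ must be absorbed by the improved $O(\tau^{4-\alpha})$ accuracy of $R^0$.
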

\begin{proof}
Subtracting \eqref{scheme-eq2}-\eqref{scheme-bz} from \eqref{numerical}, \eqref{scheme-eq1} and \eqref{czs}, respectively, we get the error system as follows:
\begin{align}
&\frac{2}{\Gamma(3-\alpha)\tau}t_{1-\frac{\alpha}{3}}^{2-\alpha}
\cdot\delta_te_i^\frac{1}{2}
=\delta_x^2\big[(1-\frac{\alpha}{3})e_i^1+\frac{\alpha}{3}e_i^0\big]
+R_i^0,\quad &1\leq i\leq M-1,\label{error-eq1}
\\
& \frac{1}{\Gamma(2-\alpha)}\Big[\sum_{l=0}^{k-1}c_{l}^{(k,\alpha)}
\delta_t^2e_i^{k-l}
+\frac{2c_k^{(k,\alpha)}}{\tau}\delta_te_i^\frac{1}{2}\Big]\notag
\\
=&(\frac{1}{2}+\sigma)\delta_x^2 e_i^{k+\frac{1}{2}}+(\frac{1}{2}-\sigma)\delta_x^2 e_i^{k-\frac{1}{2}}+R_i^k,\,
&1\leq i\leq M-1,~1\leq k\leq N-1,\label{error-eq}
\\
&e_i^0=0,\quad& 1\leq i\leq M-1,\label{error-cz}
\\
&e_0^k=0,\quad e_M^k=0,\quad& 0\leq k\leq N.\label{error-bz}
\end{align}
Noticing \eqref{error-cz}, taking the inner product of \eqref{error-eq1} with $\delta_te^\frac{1}{2},$ and applying the summation by parts and the Cauchy-Schwarz inequality, we get
\begin{align*}
&(1-\frac{\alpha}{3})|e^1|_1^2+\frac{2}{\Gamma(3-\alpha)}
t_{1-\frac{\alpha}{3}}^{2-\alpha}\|\delta_te^{\frac{1}{2}}\|^2\\
=&\tau(R^{0},\delta_te^\frac{1}{2})\leq
\frac{1}{\Gamma(3-\alpha)}t_{1-\frac{\alpha}{3}}^{2-\alpha}
\|\delta_te^{\frac{1}{2}}\|^2
+\frac{\Gamma(3-\alpha)\tau^2}{4t_{1-\frac{\alpha}{3}}^{2-\alpha}}\|R^0\|^2.
\end{align*}
The above equation leads to
\begin{align}\label{error-first}
(1-\frac{\alpha}{3})|e^1|_1^2+\frac{1}{\Gamma(3-\alpha)}
t_{1-\frac{\alpha}{3}}^{2-\alpha}\|\delta_te^{\frac{1}{2}}\|^2
\leq\frac{\Gamma(3-\alpha)\tau^2}{4t_{1-\frac{\alpha}{3}}^{2-\alpha}}\|R^0\|^2.
\end{align}
Consequently,
\[|e^1|_1=O(\tau^{4-\alpha}+h^2),\quad \tau^{1-\alpha}\|\delta_te^{\frac{1}{2}}\|=O(\tau^{4-\alpha}+h^2).\]

For $k\geq1,$ noticing \eqref{trunction}, \eqref{error1} and \eqref{error-first}, and using Theorem \ref{the1}, there exists a constant $c_6$ such that
\[\big|(\frac{1}{2}+\sigma)e^{k+1}+(\frac{1}{2}-\sigma)e^{k}\big|_1^2\leq c_3\big(\tau\|\delta_t^2e^0\|^2+\tau^\alpha\|R^0\|^2+\tau\sum_{m=1}^k\|R^m\|^2\big)
\leq c_6^2(\tau^2+h^2)^2,\quad 1\leq k\leq N-1.\]
By the embedding theorem, we get
\[\big\|(\frac{1}{2}+\sigma)e^{k+1}+(\frac{1}{2}-\sigma)e^{k}\big\|_\infty\leq \frac{\sqrt{L}}{2}\big|(\frac{1}{2}+\sigma)e^{k+1}+(\frac{1}{2}-\sigma)e^{k}\big|_1
\leq \frac{\sqrt{L}}{2}c_6(\tau^2+h^2),\quad 1\leq k\leq N-1.\]
Applying Lemma \ref{le3}, one gets
\begin{align*}
\|\widehat{e}^{k+1}\|_\infty\leq& \big(\frac{3}{2}-\sigma\big)\big\|(\frac{1}{2}+\sigma)e^{k+1}
+(\frac{1}{2}-\sigma)e^{k}\big\|_\infty
+\big(\frac{1}{2}-\sigma\big)\big\|(\frac{1}{2}+\sigma)e^{k}
+(\frac{1}{2}-\sigma)e^{k-1}\big\|_\infty
+c_7\tau^2\\
\leq &\sqrt{L}c_5(\tau^2+h^2)+c_7\tau^2,\quad 1\leq k\leq N-1,
\end{align*}
where $c_7$ is a constant. This completes the proof.
\end{proof}
\section{Finite difference scheme on the graded meshes}\label{sec-regularity}
Consider the problem \eqref{eq}-\eqref{bz}, if its solution $u\in C^{(2,2)}([0, L]\times
[0, T])$ and $f\in C^{(0,0)}([0, L]\times[0, T])$, then we have $\lim\limits_{t\to 0+}{}_C\mathrm{D}_{0,t}^\alpha u(x,t)=0$. Letting $t\to 0+$ in \eqref{eq}, the functions  $\varphi(x)$ and $f(x,0)$ must satisfy the following
necessary condition
\begin{equation}
\left\{\begin{array}{ll} -\varphi''(x)=f(x,0), & x\in
(0,L),\\
\varphi(0)=\mu(0), & \varphi(L)=\nu(0).\end{array}\right.\label{sun11}
\end{equation}
If $f(x,0)$ is known, then $\varphi(x)$ is uniquely determined by \eqref{sun11}. Conversely, if $\varphi(x)$ is known, then $\varphi(x)$ determines $f(x,0)$ uniquely. It is easy to know that $\varphi(x)$ and $f(x,0)$ are not independent of each other.

If the solution $u\in C^{(2,3)}([0, L]\times[0, T]),$  $f\in C^{(0,1)}([0, L]\times[0, T])$ and \eqref{sun11} is valid, then we can obtain $u_{tt}(x,0)=\lim\limits_{t\to 0+} u_{tt}(x,t)=0.$ Taking the derivative of \eqref{eq} with respect to $t,$ and letting $t\to 0+$ in the obtained equality, the functions $\psi(x)$ and $f_t(x,0)$ must satisfy the following necessary condition
\begin{equation}
\left\{\begin{array}{ll} -\psi''(x)=f_t(x,0), & x\in
(0,L),\\
\psi(0)=\mu_t(0), & \psi(L)=\nu_t(0).\end{array}\right.\label{sun12}
\end{equation}
If $f_t(x,0)$ is known, then $\psi(x)$ is uniquely determined by \eqref{sun12}. Conversely, if $\psi(x)$ is known, then $\psi(x)$ determines $f_t(x,0)$ uniquely. It is easy to know that $\psi(x)$ and $f_t(x,0)$ are not independent of each other.

Summarizing above results, if the smooth solution of the problem \eqref{eq}-\eqref{bz} exists, then the values of the source term $f(x,t)$ with its derivative $f_t(x,t)$ at $t=0$ have some close relations \eqref{sun11} and \eqref{sun12} with the initial values $\varphi(x)$ and $\psi(x).$ We call \eqref{sun11} and \eqref{sun12} compatibility conditions. These compatibility conditions are very similar to those of the Neumann boundary value problems of Poisson's equation.

In the above sections, we have presumed that $u(x,t)$ has suitable time regularity at the starting time $t=t_0$ (in this article, $t_0=0)$, for example, $u(x,t)\in C^{4}[t_0,T]$ for any fixed $x\in\Omega.$ This does not always hold true, that is, sometimes the solution to problem \eqref{eq}-\eqref{bz} likely behaves weak regularity at the starting time for some source term $f(x,t)$ and/or the definite conditions \eqref{cz} and \eqref{bz}, see the following regularity description of the solution to problem \eqref{eq}-\eqref{bz}.

\paragraph{Regularity of the solution} Let $\{(\lambda_i, \omega_i): i=1,2,\cdots\}$ be the eigenvalues and eigenfunctions for the Sturm-Liouville boundary value problem (see \cite{sakamoto})
\[\mathcal{L}\omega_i:=-\Delta \omega_i=\lambda_i\omega_i~\text{on}~\Omega,\quad \omega_i=0~\text{on}~\partial\Omega,\]
where the eigenfunctions are normalised by requiring $\|\omega_i\|=1$ for all $i.$ The fractional power $\mathcal{L}^\gamma$ of the operator $\mathcal{L}$ is defined for each $\gamma>0$ with domain
\[D(\mathcal{L}^\gamma):=\{g\in L_2(\Omega): \sum_{i=1}^\infty\lambda_i^{2\gamma}|(g,\omega_i)|^2<\infty\}\]
and norm $\|g\|_{\mathcal{L}^\gamma}=(\sum_{i=1}^\infty\lambda_i^{2\gamma}
|(g,\omega_i)|^2)^{1/2}.$

\begin{lemma} \rm{\cite{shen-martin-sun}}
Assume that $\varphi,\,\psi\in D(\mathcal{L}^3),$ and that source term $f\in D(\mathcal{L}^3)$ for each $t\in[0,T],$ and $f_t(\cdot,t),~f_{tt}(\cdot,t)\in D(\mathcal{L}^1)$ for each $t\in[0,T]$ with
\[\|f(\cdot,t)\|_{\mathcal{L}^3}+\|f_t(\cdot,t)\|_{\mathcal{L}^1}+
\|f_{tt}(\cdot,t)\|_{\mathcal{L}^1}\leq C\]
for some constant $C$ independent of $t.$ Then there is a unique solution $u$ of \eqref{eq}-\eqref{bz} and there exists a constant $C$ such that for all $(x,t)\in \Omega\times(0,T)$ one has
\begin{equation}\label{regularity}
\Big|\frac{\partial^ku}{\partial x^k}(x,t)\Big|\leq C,\quad \text{for}\,~ k=0,1,2,3,4; \quad
\Big|\frac{\partial^lu}{\partial t^l}(x,t)\Big|\leq C(1+t^{\alpha-l}),\quad \text{for}\,~ l=0,1,2,3.
\end{equation}
\end{lemma}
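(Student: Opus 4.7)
The approach is spectral. Since $\{\omega_i\}_{i\ge 1}$ is an orthonormal basis of $L^2(\Omega)$ diagonalising the operator $\mathcal{L}=-\partial_x^2$ with eigenvalues $\lambda_i>0$, I would seek the solution in the separated form $u(x,t)=\sum_{i\ge 1}u_i(t)\omega_i(x)$ and expand $\varphi,\psi,f$ in the same basis. Substituting into \eqref{eq}--\eqref{bz} decouples the PDE into a family of scalar Caputo fractional ODEs
\[{}_C\mathrm{D}_{0,t}^{\alpha}u_i(t)+\lambda_i u_i(t)=f_i(t),\qquad u_i(0)=\varphi_i,\quad u_i'(0)=\psi_i,\]
whose unique solution admits the Mittag-Leffler representation
\[u_i(t)=\varphi_i E_{\alpha,1}(-\lambda_i t^\alpha)+\psi_i\,t\,E_{\alpha,2}(-\lambda_i t^\alpha)+\int_0^t(t-s)^{\alpha-1}E_{\alpha,\alpha}(-\lambda_i(t-s)^\alpha)f_i(s)\,\mathrm{d}s.\]

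Next I would invoke the classical uniform bound $|E_{\alpha,\beta}(-z)|\le C_{\alpha,\beta}/(1+z)$ for $z\ge 0$ and $\alpha\in(1,2)$. Using the sine-basis estimate $|\omega_i^{(k)}(x)|\le C\lambda_i^{k/2}$ on $\Omega=(0,L)$ with Dirichlet data, the spatial bound $|\partial_x^k u(x,t)|\le C$ reduces to absolute convergence of $\sum_i\lambda_i^{k/2}|u_i(t)|$. A Cauchy-Schwarz split with eigenvalue weight $2s$ converges whenever $\sum_i\lambda_i^{k-2s}<\infty$, which in one dimension requires $2s>k+1$; for $k=4$ this forces $s>5/2$, so that $s=3$ suffices. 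The matching moment bound $\sum_i\lambda_i^{6}(|\varphi_i|^2+|\psi_i|^2+\sup_{t}|f_i(t)|^2)<\infty$ is precisely the hypothesis $\varphi,\psi,f(\cdot,t)\in D(\mathcal{L}^3)$.

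For the temporal derivatives I would differentiate the representation directly, using $\partial_t E_{\alpha,1}(-\lambda_i t^\alpha)=-\lambda_i t^{\alpha-1}E_{\alpha,\alpha}(-\lambda_i t^\alpha)$ and its iterates, to produce expressions of the form $\lambda_i t^{\alpha-l}$ times bounded Mittag-Leffler factors. The key combination $\lambda_i t^{\alpha-l}(1+\lambda_i t^\alpha)^{-1}\le\min\{t^{\alpha-l},\lambda_i^{-1}t^{-l}\}$ supplies exactly the singular factor $t^{\alpha-l}$ while keeping the eigenvalue sum convergent. For the source convolution, $l$-fold integration by parts transfers up to two derivatives onto $f_i$, which explains why the hypothesis demands only $f_t,f_{tt}\in D(\mathcal{L}^1)$: the integrable kernel $(t-s)^{\alpha-1}$ effectively supplies two spare factors of $\lambda_i^{-1}$, and one redistributes the losses between kernel and data.

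The main obstacle is the delicate bookkeeping of weights: each application of $\partial_t$ pays either a factor $t^{-1}$ near the origin or a factor $\lambda_i$ at high frequencies, and the two regularity assumptions $D(\mathcal{L}^3)$ for $(\varphi,\psi,f)$ and $D(\mathcal{L}^1)$ for $(f_t,f_{tt})$ are calibrated exactly to balance this trade-off so that the series $\sum_i u_i^{(l)}(t)\omega_i^{(k)}(x)$ converges uniformly on $\Omega\times(0,T)$ with the claimed bound. The sharpness of the $t^{\alpha-l}$ behaviour at the origin is intrinsic to the leading term $\varphi_i E_{\alpha,1}(-\lambda_i t^\alpha)=\varphi_i+O(t^\alpha)$ and cannot be improved without imposing additional compatibility conditions at $t=0$ of the type discussed in \eqref{sun11}--\eqref{sun12}.
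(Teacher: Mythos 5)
The paper does not actually prove this lemma: it is quoted from \cite{shen-martin-sun} (and ultimately rests on the Sakamoto--Yamamoto theory), so there is no internal proof to compare against. Your spectral outline is precisely the argument used in that literature --- eigenfunction expansion, the Mittag--Leffler representation of each modal fractional ODE, the uniform bound $|E_{\alpha,\beta}(-z)|\le C/(1+z)$ for $z\ge 0$, and the calibration of the $D(\mathcal{L}^3)$ and $D(\mathcal{L}^1)$ hypotheses against the number of spatial and temporal derivatives taken. At the level of a sketch the structure is sound and matches the source.

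Two points of bookkeeping deserve correction. First, the displayed inequality $\lambda_i t^{\alpha-l}(1+\lambda_i t^{\alpha})^{-1}\le\min\{t^{\alpha-l},\lambda_i^{-1}t^{-l}\}$ is false as written: replacing the denominator by $1$ gives $\lambda_i t^{\alpha-l}$ and replacing it by $\lambda_i t^{\alpha}$ gives $t^{-l}$, so the correct bound is $\min\{\lambda_i t^{\alpha-l},\,t^{-l}\}$, which is your right-hand side multiplied by $\lambda_i$ (test $\lambda_i t^{\alpha}=1$). This does not derail the argument --- one simply pays that extra power of $\lambda_i$ on the data, and $\varphi,\psi\in D(\mathcal{L}^3)$ leaves ample room after the $\lambda_i^{k/2}$ spent on $\|\omega_i^{(k)}\|_\infty$ --- but the weight count as displayed would not close. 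Second, for $l=3$ the convolution term cannot place all three derivatives on $f_i$, since only $f_{tt}$ is controlled; the third derivative must fall on the kernel, producing the still-integrable weight $\tau^{\alpha-2}$ (equivalently $t^{\alpha-1}E_{\alpha,\alpha}\mapsto t^{\alpha-2}E_{\alpha,\alpha-1}$ on the boundary terms). This is presumably what you intend by the kernel ``supplying spare factors,'' but it is exactly the step at which the hypothesis $f_{tt}(\cdot,t)\in D(\mathcal{L}^1)$, rather than any assumption on $f_{ttt}$, is used, and it should be made explicit.
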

In the following, we deal with this case by using the difference scheme on the graded meshes.

Here we use a time-graded mesh $t_k=(k/N)^rT$ for $0\leq k\leq N.$ Denote $t_{k-\frac{1}{2}}=\frac{1}{2}(t_k+t_{k-1}),$ $\tau_k=t_k-t_{k-1}$ for $1\leq k\leq N;$ $t_{k+\sigma_k}=t_k+\sigma_k\tau_{k+1},$ $\rho_k=\frac{\tau_k}{\tau_{k+1}},$ $\sigma_k=(1-\frac{\alpha}{2})\rho_k,$ $\overline{\tau}_k=t_{k+1}-t_{k-1}=\tau_{k+1}+\tau_k$ for $1\leq k\leq N-1.$

For a given function $p\in C^1[0,T],$ denote $p^k=p(t_k)$ and introduce the following notations:
\[\widehat{\delta}_tp^{k-\frac{1}{2}}=\frac{p^k-p^{k-1}}{\tau_k},\quad \widehat{\delta}_t^2p^0=\frac{2}{\tau_1}
(\widehat{\delta}_tp^{\frac{1}{2}}-p'(t_0)),\quad \widehat{\delta}_t^2p^k=\frac{\widehat{\delta}_tp^{k+\frac{1}{2}}
-\widehat{\delta}_tp^{k-\frac{1}{2}}}{\overline{\tau}_k},\quad k\geq1.\]
Similar to Section \ref{appro}, on the first interval $[0,t_\frac{1}{2}],$
we have
\[\widehat{H}_{3,0}(t)=p(t_0)+p'(t_0)(t-t_0)+\frac{1}{2}
\widehat{\delta}_t^2p^0(t-t_0)^2
+\frac{1}{\overline{\tau}_1}(\widehat{\delta}_t^2p^1-\frac{1}{2}
\widehat{\delta}_t^2p^0)
(t-t_0)^2(t-t_1).\]
It is easy to know that
\begin{equation}\label{H3-graded}
\widehat{H}''_{3,0}(t)=\widehat{\delta}_t^2p^1\frac{6t-2t_1}{t_2}
+\widehat{\delta}_t^2p^0\frac{2t_{\frac{3}{2}}-3t}{t_2}.
\end{equation}
On the interval $[t_{l-\frac{1}{2}},t_{l+\frac{1}{2}}]\,(l \geq1),$ one has
\[\widehat{N}_{3,l}(t)=p^{l-1}+\widehat{\delta}_tp^{l-\frac{1}{2}}(t-t_{l-1})
+\widehat{\delta}_t^2p^{l}(t-t_{l-1})(t-t_{l})+\frac{\widehat{\delta}_t^2p^{l+1}
-\widehat{\delta}_t^2p^{l}}{t_{l+2}-t_{l-1}}(t-t_{l-1})(t-t_{l})(t-t_{l+1}).\]
Further we can get
\begin{equation}\label{N3-graded}
\widehat{N}''_{3,l}(t)=\widehat{\delta}_t^2p^{l+1}\frac{6t-2(t_{l-1}+t_{l}+t_{l+1})}
{t_{l+2}-t_{l-1}}+\widehat{\delta}_t^2p^{l}\frac{2(t_{l+2}+t_{l}+t_{l+1})-6t}
{t_{l+2}-t_{l-1}}.
\end{equation}
On the interval $[t_{k-\frac{1}{2}},t_{k+\sigma_k}],$ we obtain
\[\widehat{N}_{2,k}(t)=p^{k-1}+\widehat{\delta}_tp^{k-\frac{1}{2}}(t-t_{k-1})
+\widehat{\delta}_t^2p^{k}(t-t_{k-1})(t-t_{k}).\]
Thus we have
\begin{equation}\label{super}
\widehat{N}''_{2,k}(t)=2\widehat{\delta}_t^2p^{k}.
\end{equation}
Consequently, we can get an H3N3-2$_{\sigma_k}$ approximation on the graded meshes as follows:
\begin{equation}\label{appro-graded}\begin{array}{l}
\ds_C\mathrm{D}_{0,t}^\alpha\, p(t)|_{t=t_{k+\sigma_k}}
\\
\approx\ds\frac{1}{\Gamma(2-\alpha)}\Big[\int_{t_0}^{t_\frac{1}{2}}
\frac{\widehat{H}_{3,0}''(s)}
{(t_{k+\sigma_k}-s)^{\alpha-1}}\mathrm{d}s+\sum_{l=1}^{k-1}
\int_{t_{l-\frac{1}{2}}}^{t_{l+\frac{1}{2}}}
\frac{\widehat{N}_{3,l}''(s)}{(t_{k+\sigma_k}-s)^{\alpha-1}}\mathrm{d}s
\\
\quad+\ds\int_{t_{k-\frac{1}{2}}}^{t_{k+\sigma_k}}
\frac{\widehat{N}_{2,k}''(s)}{(t_{k+\sigma_k}-s)^{\alpha-1}}\mathrm{d}s\Big]
\\
=\ds\frac{1}{\Gamma(2-\alpha)}\sum_{l=0}^k\widetilde{c}_{l}^{(k,\alpha)}
\widehat{\delta}_t^2p^{k-l}
\equiv \mathfrak{D}p(t)|_{t=t_{k+\sigma_k}},\quad 1\leq k\leq N-1,
\end{array}\end{equation}
where
\begin{equation} \label{c-coe-non}
\widetilde{c}_{l}^{(k,\alpha)}=
\begin{cases}
\ds \widetilde{a}_{k-l}^{(k,\alpha)}+\widetilde{b}_{k-l}^{(k,\alpha)},\quad &0\leq l\leq k-1,
\vspace{1ex}\\[0.1in]
\displaystyle\int_{t_0}^{t_\frac{1}{2}}\frac{2t_\frac{3}{2}-3s}{t_2}
(t_{k+\sigma_k}-s)^{1-\alpha}\mathrm{d}s,&l=k.
\end{cases}
 \end{equation}
 Here
\begin{equation*} 
\widetilde{a}_{l}^{(k,\alpha)}=
\begin{cases}
\displaystyle\int_{t_0}^{t_\frac{1}{2}}\frac{6s-2t_{1}}{t_2}
(t_{k+\sigma_k}-s)^{1-\alpha}\mathrm{d}s,\quad &l=1,\vspace{1ex}\\
\displaystyle\int_{t_{l-\frac{3}{2}}}^{t_{l-\frac{1}{2}}}
\frac{6s-2(t_l+t_{l-1}+t_{l-2})}{t_{l+1}-t_{l-2}}
(t_{k+\sigma_k}-s)^{1-\alpha}\mathrm{d}s,\quad&2\leq l\leq k,
\end{cases}
 \end{equation*}
 and,
 \begin{equation*} 
\widetilde{b}_{l}^{(k,\alpha)}=
\begin{cases}
\displaystyle\int_{t_{l-\frac{1}{2}}}^{t_{l+\frac{1}{2}}}\frac{2(t_{l+2}
+t_{l+1}+t_l)-6s}{t_{l+2}-t_{l-1}}(t_{k+\sigma_k}-s)^{1-\alpha}\mathrm{d}s,\quad &1\leq l\leq k-1, \vspace{1ex}\\
\displaystyle\int_{t_{k-\frac{1}{2}}}^{t_{k+\sigma_k}}
2(t_{k+\sigma_k}-s)^{1-\alpha}\mathrm{d}s,\quad&l=k.
\end{cases}
 \end{equation*}
\begin{remark}
In this part, similar to the processing technique of Theorem \ref{error}, in order to improve the overall accuracy, $\sigma_k=(1-\alpha/2)\rho_k$ is selected such that the following equality holds
\begin{align*}
\int_{t_{k-\frac{1}{2}}}^{t_{k+\sigma_k}}\frac{s-t_k}
{(t_{k+\sigma_k}-s)^{\alpha-1}}\mathrm{d}s
=(\sigma_k\tau_{k+1}+\frac{\tau_k}{2})^{2-\alpha}\tau_{k+1}
\Big[\frac{\sigma_k}{2-\alpha}
-\frac{\sigma_k+\frac{\tau_k}{2\tau_{k+1}}}{3-\alpha}\Big]=0.
\end{align*}
\end{remark}

Let's consider the discretization of initial weak regular problem \eqref{eq}-\eqref{bz} at the point $t=t_{k+\sigma_k}.$ Approximate the temporal derivative by \eqref{appro-graded}. For the spatial derivative, by Taylor's expansion, we have
\begin{equation}\label{spatial-1}\begin{array}{l}
\ds \hspace{0.15in} u_{xx}(x_i,t_{k+\sigma_k})
\\[0.1in]
=\ds \frac{\frac{\tau_k}{2}+\sigma_k\tau_{k+1}}{\frac{1}{2}
(\tau_k+\tau_{k+1})}u_{xx}(x_i,t_{k+\frac{1}{2}})+ \frac{(\frac{1}{2}-\sigma_k)\tau_{k+1}}{\frac{1}{2}(\tau_k
+\tau_{k+1})}u_{xx}(x_i,t_{k-\frac{1}{2}})+O(\tau_k^2+\tau_{k+1}^2)
\\[0.2in]
=\ds \frac{\frac{\rho_k}{2}+\sigma_k}{\frac{1}{2}(\rho_{k}+1)}
\delta_x^2U_i^{k+\frac{1}{2}}+ \frac{\frac{1}{2}-\sigma_k}{\frac{1}{2}(\rho_{k}+1)}
\delta_x^2U_i^{k-\frac{1}{2}}
+O(\tau_k^2+\tau_{k+1}^2+h^2),\\[0.2in]
\ds\qquad\qquad\qquad\qquad\qquad\qquad\qquad\qquad
 1\leq i\leq M-1,\,\, 1\leq k\leq N-1.
\end{array}\end{equation}
Similar to the process of Section \ref{scheme} for solving the problem with smooth solutions, we can obtain the following difference scheme for solving the problem with initial weak regularity:
\begin{equation}\label{initial-graded}\begin{array}{l}
\ds\frac{1}{\Gamma(3-\alpha)}t_{1-\frac{\alpha}{3}}^{2-\alpha}
\widehat{\delta}_t^2u_i^0
=\delta_x^2\big[(1-\frac{\alpha}{3})u_i^1+\frac{\alpha}{3}u_i^0\big]
+f(x_i,t_{1-\frac{\alpha}{3}}),\; 1\leq i\leq M-1,
\\[0.2in]
\ds \frac{1}{\Gamma(2-\alpha)}\sum_{l=0}^k\widetilde{c}_{l}^{(k,\alpha)}
\widehat{\delta}_t^2u_i^{k-l}
\\[0.2in]
\ds=\frac{\frac{\rho_k}{2}+\sigma_k}{\frac{1}{2}(\rho_{k}+1)}
\delta_x^2u_i^{k+\frac{1}{2}}+ \frac{\frac{1}{2}-\sigma_k}
{\frac{1}{2}(\rho_{k}+1)}
\delta_x^2u_i^{k-\frac{1}{2}}+f_i^{k+\sigma_k},\;
1\leq i\leq M-1,~1\leq k\leq N-1,
\\[0.2in]
\ds u_i^0=\varphi_i,\; 1\leq i\leq M-1,
\\[0.2in]
\ds u_0^k=0,\quad u_M^k=0,\; 0\leq k\leq N.
\end{array}\end{equation}
\section{Numerical examples}\label{example}
In the section, we will display the illustrative numerical experiments to test the numerical algorithms. It is obvious from \eqref{scheme-eq2}-\eqref{scheme-bz} and \eqref{initial-graded} that the numerical solution of the current layer depends on the numerical information of all the preceding layers, which can be quite time-consuming in the actual calculations. In order to overcome this shortcoming, we use the sum-of-exponentials technique to speed up the evaluation of the H3N3-2$_\sigma$ formulae \eqref{appro1} and \eqref{appro-graded}. A key point is to effectively approximate the kernel function $(t-s)^{1-\alpha}$ by sum-of-exponentials on the interval $[\delta,T]\,(\delta$ sufficiently small).
\begin{lemma}\label{soe}\rm{\cite{jiangsd}}
For the given $\gamma\in(0,1)$ and tolerance error $\epsilon,$ cut-off time restriction $\delta$ and final time $T,$ there are one positive integer $N_{exp}^{(\gamma)},$ positive points $\{s_l^{(\gamma)}\,|\,l=1,2,\cdots,N_{exp}^{(\gamma)}\},$ and corresponding positive weights $\{w_l^{(\gamma)}\,|\,l=1,2,\cdots,N_{exp}^{(\gamma)}\}$ such that
\[|t^{-\gamma}-\sum_{l=1}^{N_{exp}^{(\gamma)}}w_l^{(\gamma)}
e^{-s_l^{(\gamma)}t}|\leq\epsilon,\quad \forall\, t\in[\delta,T],\]
where
\[N_{exp}^{(\gamma)}=O\Big((\log\frac{1}{\epsilon})(\log\log\frac{1}{\epsilon}
+\log\frac{T}{\delta})+(\log\frac{1}{\delta})(\log\log\frac{1}{\epsilon}
+\log\frac{1}{\delta})\Big).\]
\end{lemma}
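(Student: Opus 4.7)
The plan is to realize $t^{-\gamma}$ as a Laplace-type integral whose discretization naturally produces a positive exponential sum, and then to discretize it by a carefully designed composite quadrature whose node count matches the claimed bound. The starting point is the gamma identity
\begin{equation*}
t^{-\gamma} \;=\; \frac{1}{\Gamma(\gamma)}\int_0^\infty s^{\gamma-1}\,e^{-st}\,\mathrm{d}s,
\end{equation*}
followed by the substitution $s=e^u$, which yields
\begin{equation*}
t^{-\gamma} \;=\; \frac{1}{\Gamma(\gamma)}\int_{-\infty}^\infty \exp\bigl(\gamma u - t\,e^u\bigr)\,\mathrm{d}u.
\end{equation*}
In this form the integrand is entire in $u$, decays only single-exponentially as $u\to-\infty$ (through the factor $e^{\gamma u}$), and decays doubly exponentially as $u\to+\infty$ (through $\exp(-te^u)$ with $t\ge\delta$), which is precisely the kind of integrand to which Gaussian quadrature is extremely well suited.

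Next I would truncate the real line to a window $[-a,b]$ chosen so that both tails contribute at most $\tfrac{\epsilon}{3}$ uniformly in $t\in[\delta,T]$. Using $t\le T$, the left tail is bounded by $T^{-\gamma}e^{-\gamma a}/\gamma$, forcing $a=O(\log(T/\delta)+\log(1/\epsilon))$; using $t\ge\delta$, the right tail is $\lesssim\exp(-\delta e^b)$, so $b=O(\log(1/\delta)+\log\log(1/\epsilon))$ suffices. I then partition $[-a,b]$ into $O(\log(T/\delta)+\log(1/\delta))$ dyadic subintervals, chosen so that within each piece the integrand's analytic continuation admits a Bernstein ellipse of moderate size, and apply a Gauss--Legendre rule with $p$ nodes on each piece. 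Because the integrand is analytic in a complex strip, classical contour-deformation bounds give quadrature error of order $\rho^{-p}$ on each subinterval for some fixed $\rho>1$, so taking $p=O(\log(1/\epsilon)+\log\log(1/\epsilon))$ per subinterval drives the aggregate error below $\epsilon$. Setting $s_l^{(\gamma)}=e^{u_l}$ and $w_l^{(\gamma)}=\omega_l\,e^{\gamma u_l}/\Gamma(\gamma)$, where $(u_l,\omega_l)$ are the nodes and weights of the composite rule, produces the required positive exponential sum, and multiplying the per-subinterval node count by the number of subintervals reproduces the bound $N_{exp}^{(\gamma)}=O\bigl((\log\tfrac{1}{\epsilon})(\log\log\tfrac{1}{\epsilon}+\log\tfrac{T}{\delta})+(\log\tfrac{1}{\delta})(\log\log\tfrac{1}{\epsilon}+\log\tfrac{1}{\delta})\bigr)$.

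The main obstacle is the sharp tracking in Step 3--4: one must choose the widths of the dyadic subintervals so that the Bernstein-ellipse parameter $\rho$ on each piece is bounded below \emph{independently} of the piece, even as the integrand transitions from the regime dominated by $e^{\gamma u}$ near $u=-a$ to the doubly-exponential regime near $u=b$. In particular, the rightmost subintervals where $te^u$ is large need geometrically shrinking widths to keep the analytic extension of $\exp(-te^u)$ well-controlled, and the logarithmic factor $\log(1/\delta)\cdot\log(1/\delta)$ in the claimed estimate comes precisely from balancing the number of such subintervals against the growing local frequencies. Once this balance is made quantitative, the positivity of the resulting $w_l^{(\gamma)},s_l^{(\gamma)}$ is automatic from the positivity of Gauss--Legendre weights and of the exponential substitution, and the lemma follows.
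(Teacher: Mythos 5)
First, a point of reference: the paper does not prove this lemma at all --- it is quoted verbatim from \cite{jiangsd} and used as a black box, so there is no in-paper argument to compare against. Your outline follows the same general route as that cited source (the Laplace representation $t^{-\gamma}=\frac{1}{\Gamma(\gamma)}\int_0^\infty s^{\gamma-1}e^{-ts}\,\mathrm{d}s$, truncation of the integration range, composite Gaussian quadrature, and positivity of the resulting weights inherited from positivity of the Gauss weights and of the density), and your tail estimates fixing $b=O(\log\frac1\delta+\log\log\frac1\epsilon)$ are correct.

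The genuine gap is in the node count. After the substitution $s=e^u$ the left truncation point must satisfy $a=\Theta(\gamma^{-1}\log\frac1\epsilon+\log\frac1T)$, because the integrand decays only like $e^{\gamma u}$ as $u\to-\infty$; so the window $[-a,b]$ has length $\Theta(\log\frac1\epsilon)$ coming from its left part alone, yet your partition into ``$O(\log\frac{T}{\delta}+\log\frac1\delta)$ dyadic subintervals'' does not account for this region at all. Covering it by pieces admitting a uniform Bernstein ellipse and putting $O(\log\frac1\epsilon)$ Gauss points on each produces an extra $(\log\frac1\epsilon)^2$-type term that is absent from the stated complexity, and the final bound in your last step is asserted rather than derived. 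The construction in \cite{jiangsd} avoids this precisely by \emph{not} subdividing near $s=0$: on the entire initial range $s\in(0,c/T]$ one has $ts\le c$ uniformly in $t\le T$, so $e^{-ts}$ is analytic and uniformly bounded on a fixed ellipse after rescaling, and a single Gauss--Jacobi rule with weight $s^{\gamma-1}$ and $O(\log\frac1\epsilon)$ nodes disposes of that whole piece; your exponential change of variables obscures this option because it stretches the neighborhood of $s=0$ into a $u$-interval of length $\Theta(\log\frac1\epsilon)$. Similarly, the term $(\log\frac1\delta)(\log\log\frac1\epsilon+\log\frac1\delta)$ is attributed to ``balancing'' the rightmost subintervals but never computed; closing the argument requires an explicit estimate of the number of Gauss points needed on the dyadic intervals $[2^j,2^{j+1}]$ with $1\lesssim 2^j\delta\lesssim\log\frac1\epsilon$, where the growth of $e^{-ts}$ on the Bernstein ellipse must be traded off against the smallness of the factor $e^{-\delta 2^j}$. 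Until those two pieces of accounting are supplied, the claimed bound on $N_{exp}^{(\gamma)}$ does not follow from your construction.
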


Now, we will derive a fast algorithm for computing the Caputo fractional derivative $_C\mathrm{D}_{0,t}^\alpha\, p(t)|_{t=t_{k+\sigma}}.$ Using Lemma \ref{soe},
we have
\begin{equation}\label{fast-appro}\begin{array}{l}
\ds\quad _C\mathrm{D}_{0,t}^\alpha\, p(t)|_{t=t_{k+\sigma}}
\\
=\ds \frac{1}{\Gamma(2-\alpha)}\Big[\int_{t_0}^{t_\frac{1}{2}}p''(s)(t_{k+\sigma}-s)^{1-\alpha}\mathrm{d}s
\quad+\sum_{l=1}^{k-1}\int_{t_{l-\frac{1}{2}}}^{t_{l+\frac{1}{2}}}p''(s)(t_{k+\sigma}-s)^{1-\alpha}\mathrm{d}s
\\
\ds \quad+\int_{t_{k-\frac{1}{2}}}^{t_{k+\sigma}}p''(s)(t_{k+\sigma}-s)^{1-\alpha}\mathrm{d}s\Big]
\\
\approx\ds\frac{1}{\Gamma(2-\alpha)}\Big[\sum_{m=1}^{N_{exp}^{(\alpha-1)}}w_m^{(\alpha-1)}
\int_{t_0}^{t_\frac{1}{2}}H''_{3,0}(s)e^{-s_m^{(\alpha-1)}(t_{k+\sigma}-s)}\mathrm{d}s
\\
\ds\quad+\sum_{m=1}^{N_{exp}^{(\alpha-1)}}w_m^{(\alpha-1)}\sum_{l=1}^{k-1}\int_{t_{l-\frac{1}{2}}}^{t_{l+\frac{1}{2}}}
N''_{3,l}(s)e^{-s_m^{(\alpha-1)}(t_{k+\sigma}-s)}\mathrm{d}s+\int_{t_{k-\frac{1}{2}}}^{t_{k+\sigma}}\frac{N''_{2,k}(s)}
{(t_{k+\sigma}-s)^{\alpha-1}}\mathrm{d}s\Big]
\\
=\ds\frac{1}{\Gamma(2-\alpha)}\Big[\sum_{m=1}^{N_{exp}^{(\alpha-1)}}w_m^{(\alpha-1)}F_m^k
+\frac{\tau^{2-\alpha}}{2-\alpha}(\sigma+\frac{1}{2})^{2-\alpha}\delta_t^2p^k\Big]\equiv \,^F\mathcal{D}p(t)|_{t=t_{k+\sigma}},\quad 1\leq k\leq N.
\end{array}\end{equation}
Here
\begin{align*}
&F_m^k=\int_{t_0}^{t_\frac{1}{2}}H''_{3,0}(s)e^{-s_m^{(\alpha-1)}(t_{k+\sigma}-s)}\mathrm{d}s
+\sum_{l=1}^{k-1}\int_{t_{l-\frac{1}{2}}}^{t_{l+\frac{1}{2}}}N''_{3,l}(s)e^{-s_m^{(\alpha-1)}(t_{k+\sigma}-s)}\mathrm{d}s,
\quad k=1,2,\cdots.
\end{align*}
The integral $F_m^k$ for $k\geq 2$ can be evaluated by using a recursive algorithm, that is,
\begin{align*}
F_m^k=&e^{-s_m^{(\alpha-1)}\tau}F_m^{k-1}+\int_{t_{k-\frac{3}{2}}}^{t_{k-\frac{1}{2}}}N''_{3,k-1}(s)
e^{-s_m^{(\alpha-1)}(t_{k+\sigma}-s)}\mathrm{d}s\notag
\\
=&e^{-s_m^{(\alpha-1)}\tau}F_m^{k-1}+\int_{t_{k-\frac{3}{2}}}^{t_{k-\frac{1}{2}}}\Big(\delta_t^2p^{k}
\frac{s-t_{k-1}}{\tau}+\delta_t^2p^{k-1}\frac{t_{k}-s}{\tau}\Big)e^{-s_m^{(\alpha-1)}
(t_{k+\sigma}-s)}\mathrm{d}s\notag
\\
=&e^{-s_m^{(\alpha-1)}\tau}F_m^{k-1}+A_m\delta_t^2 p^{k}+B_m\delta_t^2 p^{k-1},
\end{align*}
in which
\begin{align*}
&F_m^1=\int_{t_0}^{t_\frac{1}{2}}H''_{3,0}(s)e^{-s_m^{(\alpha-1)}(t_{1+\sigma}-s)}\mathrm{d}s\notag
\\
=&\Big\{\frac{1}{s_m^{(\alpha-1)}}\big[\frac{1}{4}e^{-(\sigma+\frac{1}{2})s_m^{(\alpha-1)}\tau}+\frac{1}{2}
e^{-(\sigma+1)s_m^{(\alpha-1)}\tau}\big]\notag\\
&-\frac{3}{2(s_m^{(\alpha-1)})^2\tau}\big[e^{-(\sigma+\frac{1}{2})s_m^{(\alpha-1)}
\tau}-e^{-(\sigma+1)s_m^{(\alpha-1)}\tau}\big]\Big\}\delta_t^2p^1\notag
\\
&+\Big\{\frac{3}{s_m^{(\alpha-1)}}\big[\frac{1}{4}e^{-(\sigma+\frac{1}{2})s_m^{(\alpha-1)}\tau}
-\frac{1}{2}e^{-(\sigma+1)s_m^{(\alpha-1)}\tau}\big]\\
&
+\frac{3}
{2(s_m^{(\alpha-1)})^2\tau}\big[e^{-(\sigma+\frac{1}{2}) s_m^{(\alpha-1)}\tau}-e^{-(\sigma+1)s_m^{(\alpha-1)}\tau}\big]
\Big\}\delta_t^2p^0,
\\
A_m=&\int_{t_{k-\frac{3}{2}}}^{t_{k-\frac{1}{2}}}\frac{s-t_{k-1}}{\tau}e^{-s_m^{(\alpha-1)}(t_{k+\sigma}-s)}
\mathrm{d}s\notag
\\
=&\frac{1}{2s_m^{(\alpha-1)}}\big(e^{-(\sigma+\frac{1}{2})s_m^{(\alpha-1)}\tau}
+e^{-(\sigma+\frac{3}{2})s_m^{(\alpha-1)}\tau}\big)
-\frac{1}{(s_m^{(\alpha-1)})^2\tau}
\big(e^{-(\sigma+\frac{1}{2})s_m^{(\alpha-1)}\tau}-e^{-(\sigma+\frac{3}{2})s_m^{(\alpha-1)}\tau}\big),
\\
B_m=&\int_{t_{k-\frac{3}{2}}}^{t_{k-\frac{1}{2}}}\frac{t_{k}-s}{\tau}e^{-s_m^{(\alpha-1)}(t_{k+\sigma}-s)}
\mathrm{d}s\notag
\\
=&\frac{1}{2s_m^{(\alpha-1)}}\big(e^{-(\sigma+\frac{1}{2})s_m^{(\alpha-1)}\tau}
-3e^{-(\sigma+\frac{3}{2})s_m^{(\alpha-1)}\tau}\big)
+\frac{1}{(s_m^{(\alpha-1)})^2\tau}
\big(e^{-(\sigma+\frac{1}{2})s_m^{(\alpha-1)}\tau}-e^{-(\sigma+\frac{3}{2})s_m^{(\alpha-1)}\tau}\big).
\end{align*}

Thus, the following fast algorithm is used to compute the difference scheme \eqref{scheme-eq2}-\eqref{scheme-bz}:
\begin{align}
&\frac{1}{\Gamma(3-\alpha)}t_{1-\frac{\alpha}{3}}^{2-\alpha}\cdot\delta_t^2u_i^0
=\delta_x^2\big[(1-\frac{\alpha}{3})u_i^1+\frac{\alpha}{3}u_i^0\big]
+f(x_i,t_{1-\frac{\alpha}{3}}),\quad 1\leq i\leq M-1,\notag\\
&\frac{1}{\Gamma(2-\alpha)}\Big[\sum_{m=1}^{N_{exp}^{(\alpha-1)}}\omega_m^{(\alpha-1)}
F_{m,i}^k+\frac{\tau^{2-\alpha}}{2-\alpha}(\sigma+\frac{1}{2})^{2-\alpha}\delta_t^2u_i^k\Big]
\notag\\
&=(\frac{1}{2}+\sigma)\delta_x^2u_i^{k+\frac{1}{2}}+(\frac{1}{2}-\sigma)\delta_x^2u_i^{k-\frac{1}{2}}+f_i^{k+\sigma},\quad 1\leq i\leq M-1,\quad 1\leq k\leq N-1,\notag\\ &F_{m,i}^1=\Big\{\frac{1}{s_m^{(\alpha-1)}}\big[\frac{1}{4}e^{-(\sigma+\frac{1}{2})s_m^{(\alpha-1)}\tau}+\frac{1}{2}
e^{-(\sigma+1)s_m^{(\alpha-1)}\tau}\big]
\notag\\
&-\frac{3}{2(s_m^{(\alpha-1)})^2\tau}
\big[e^{-(\sigma+\frac{1}{2})s_m^{(\alpha-1)}
\tau}-e^{-(\sigma+1)s_m^{(\alpha-1)}\tau}\big]\Big\}\delta_t^2u_i^1
\label{fscheme-eq1}\\
&+\Big\{\frac{3}{s_m^{(\alpha-1)}}\big[\frac{1}{4}e^{-(\sigma+\frac{1}{2})s_m^{(\alpha-1)}\tau}
-\frac{1}{2}e^{-(\sigma+1)s_m^{(\alpha-1)}\tau}\big]\notag\\
&+\frac{3}{2(s_m^{(\alpha-1)})^2\tau}\big[e^{-(\sigma+\frac{1}{2}) s_m^{(\alpha-1)}\tau}-e^{-(\sigma+1)s_m^{(\alpha-1)}\tau}\big]
\Big\}\delta_t^2u_i^0,\notag\\
&\qquad\qquad\qquad\qquad\qquad\qquad\quad 1\leq m\leq N_{exp}^{(\alpha-1)},~1\leq i\leq M-1,\notag\\
& F_{m,i}^k=e^{-s_m^{(\alpha-1)}\tau}F_{m,i}^{k-1}+A_m
\delta_t^2u_i^{k}+B_{m}\delta_t^2u_i^{k-1},
\notag\\
&\qquad\qquad\qquad\qquad\qquad\qquad\qquad\quad k\geq2,\quad 1\leq m\leq N_{exp}^{(\alpha-1)},\quad 1\leq i\leq M-1,\notag\\
& u_i^0=\varphi_i,\quad 1\leq i\leq M-1,
\notag\\
& u_0^k=0,\quad u_M^k=0,\quad 0\leq k\leq N.\notag
\end{align}

Similarly, a fast algorithm for the difference scheme on graded meshes \eqref{initial-graded} is built up below:
\begin{equation}\label{fscheme-eq}
\begin{array}{l}
\ds\frac{1}{\Gamma(3-\alpha)}t_{1-\frac{\alpha}{3}}^{2-\alpha}\widehat{\delta}_t^2u_i^0
=\delta_x^2\big[(1-\frac{\alpha}{3})u_i^1+\frac{\alpha}{3}u_i^0\big]
+f(x_i,t_{1-\frac{\alpha}{3}}),\quad 1\leq i\leq M-1,
\\[0.2in]
\ds\frac{1}{\Gamma(2-\alpha)}\Big[\sum_{m=1}^{N_{exp}^{(\alpha-1)}}\omega_m^{(\alpha-1)}
\widetilde{F}_{m,i}^k+\frac{2}{2-\alpha}(t_{k+\sigma_k}-t_{k-\frac{1}{2}})^{2-\alpha}
\widehat{\delta}_t^2u_i^k\Big]
\\[0.2in]
\ds=\frac{\frac{\rho_k}{2}+\sigma_k}{\frac{1}{2}(\rho_k+1)}\delta_x^2u_i^{k+\frac{1}{2}}
+\frac{\frac{1}{2}-\sigma_k}{\frac{1}{2}(\rho_k+1)}
\delta_x^2u_i^{k-\frac{1}{2}}+f_i^{k+\sigma_k},\quad 1\leq i\leq M-1,\quad 1\leq k\leq N-1,
\\[0.2in]
\ds\widetilde{F}_{m,i}^1=\int_{t_0}^{t_\frac{1}{2}}\widehat{H}''_{3,0}(s)
e^{-s_m^{(\alpha-1)}(t_{\sigma_1+1}-s)}
\mathrm{d}s,\quad 1\leq m\leq N_{exp}^{(\alpha-1)},\quad 1\leq i\leq M-1,
\\[0.2in]
\ds\widetilde{F}_{m,i}^k=e^{-s_m^{(\alpha-1)}[(1-\sigma_{k-1})\tau_k
+\sigma_k\tau_{k+1}]}\widetilde{F}_{m,i}^{k-1}
+\widetilde{A}_m^k\widehat{\delta}_t^2u_i^{k}+\widetilde{B}_{m}^k\widehat{\delta}_t^2
u_i^{k-1},
\\[0.2in]
\ds\qquad\qquad\qquad\qquad\qquad\quad k\geq2,\quad 1\leq m\leq N_{exp}^{(\alpha-1)},
\quad 1\leq i\leq M-1,
\\[0.2in]
\ds\widetilde{A}_m^k=\int_{t_{k-\frac{3}{2}}}^{t_{k-\frac{1}{2}}}
\frac{6s-2(t_{k}+t_{k-1}+t_{k-2})}
{t_{k+1}-t_{k-2}}e^{-s_m^{(\alpha-1)}(t_{k+\sigma_k}-s)}\mathrm{d}s,
\\[0.2in]
\ds\widetilde{B}_m^k=\int_{t_{k-\frac{3}{2}}}^{t_{k-\frac{1}{2}}}
\frac{2(t_{k+1}+t_{k}+t_{k-1})-6s}
{t_{k+1}-t_{k-2}}e^{-s_m^{(\alpha-1)}(t_{k+\sigma_k}-s)}\mathrm{d}s,
\\[0.2in]
\ds u_i^0=\varphi_i,\quad 1\leq i\leq M-1,
\\[0.2in]
\ds u_0^k=0,\quad u_M^k=0,\quad 0\leq k\leq N.
\end{array}\end{equation}

Next we will display an example for solving the initial-boundary value problem \eqref{eq}-\eqref{bz} by adopting difference scheme \eqref{fscheme-eq1} and comparing it with the following L2C-based difference scheme:
\begin{equation}\label{L2C-scheme-eq2}\begin{array}{l}
\ds\frac{1}{\Gamma(3-\alpha)}t_{1-\frac{\alpha}{3}}^{2-\alpha}\cdot\delta_t^2u_i^0
=\delta_x^2\big[(1-\frac{\alpha}{3})u_i^1+\frac{\alpha}{3}u_i^0\big]
+f(x_i,t_{1-\frac{\alpha}{3}}),\quad 1\leq i\leq M-1,
\\
\ds\frac{\tau^{-\alpha}}{2\Gamma(3-\alpha)}\sum_{l=0}^{k+1}c^{(\alpha)}_{l,k}
(u_i^l-u_i^{l-1})
=\frac{1}{2}(\delta_x^2 u_i^{k+\frac{1}{2}}+\delta_x^2 u_i^{k-\frac{1}{2}})+f_i^k,
\\
\ds\qquad\qquad\qquad\qquad\qquad\qquad\qquad\qquad\qquad\qquad\qquad\,1\leq i\leq M-1,\,1\leq k\leq N-1,
\\
\ds u_i^0=\varphi_i,\quad 1\leq i\leq M-1,
\\[0.1in]
\ds u_0^k=0,\quad u_M^k=0,\quad 0\leq k\leq N,
\end{array}\end{equation}
where
\begin{align*}
c^{(\alpha)}_{l,k}=\left\{
\begin{array}{lll}
(k-l-1)^{2-\alpha}-(k-l)^{2-\alpha},&l=0,1,
\\
(k-l+2)^{2-\alpha}-(k-l+1)^{2-\alpha}-(k-l)^{2-\alpha}+(k-l-1)^{2-\alpha},&2\leq l\leq k-1,
\\
(k-l+2)^{2-\alpha}-(k-l+1)^{2-\alpha},&l=k,k+1.
\end{array}
\right.
\end{align*}
In \eqref{L2C-scheme-eq2}, $u^{-1}$ is defined as $u^{1}-2\tau \psi$ and then is used.

\begin{example}\label{ex2}
Consider the initial-boundary value problem \eqref{eq}-\eqref{bz} with $L=1,\,T=1$ and $\ds f(x,t)=\Big[\frac{24t^{5-\alpha}}{\Gamma(6-\alpha)}+\frac{\pi^2t^5}{5}\Big]\sin (\pi x).$ Then its exact solution is given as
\[u(x,t)=\frac{1}{5}t^5\sin(\pi x).\]
\end{example}

Denote the error of numerical solutions by
\[E(h,\tau)=\max_{0\leq k\leq N}\|U^k-u^k\|_\infty,\]
and the convergence order by
\[Order_\tau=\log_2\Big(\frac{E(h,2\tau)}{E(h,\tau)}\Big),\quad Order_h=\log_2\Big(\frac{E(2h,\tau)}{E(h,\tau)}\Big).\]

We adopt the fast algorithm \eqref{fscheme-eq1} by setting an absolute tolerance error $\epsilon=10^{-12}$ and cut-off time $\delta=\sigma\tau.$ We fix spacial step size $h=1/5000$ to verify the time accuracy of difference scheme \eqref{fscheme-eq1} and L2C-based difference scheme \eqref{L2C-scheme-eq2}, respectively. As we see from Table \ref{table2}, the temporal convergence order of difference scheme \eqref{fscheme-eq1} is two, while that of L2C-based difference scheme \eqref{L2C-scheme-eq2} is $3-\alpha$ for different values of $\alpha.$ Take $N$ to be $10000.$ Table \ref{table2-1} lists the spacial convergence orders for some values of $\alpha$ and $M.$ This table shows that convergence orders of difference scheme \eqref{fscheme-eq1} and L2C-based difference scheme \eqref{L2C-scheme-eq2} are two, which are in good agreement with theoretical findings.

\begin{table}[tbh!]
\setlength{\belowcaptionskip}{-5pt}
\begin{center}
\renewcommand{\arraystretch}{1.12}
\tabcolsep 0pt \caption{Truncation errors and temporal convergence
orders for Example \ref{ex2} }\label{table2}
\def\temptablewidth{0.9\textwidth}
\rule{\temptablewidth}{1pt}
{\footnotesize
\begin{tabular*}{\temptablewidth}{@{\extracolsep{\fill}}cccccccc}
\multirow{2}{*}{$\alpha$} & \multirow{2}{*}{$N$} & \multicolumn{3}{c}{difference scheme \eqref{fscheme-eq1}} & \multicolumn{3}{c}{L2C scheme \eqref{L2C-scheme-eq2}}
\\
\cline{3-5}\cline{6-8}
 ~  & ~    & $E(h,\tau)$ & $Order_\tau$ & CPU(s) & $E(h,\tau)$ & $Order_\tau$ & CPU(s)
\\
\cline{1-8}
 ~  & 160  &   1.87e-5   &      ~       & 61.11  &   3.24e-5   &     ~        & 72.28
\\
1.1 & 320  &   4.67e-6   &     1.9994   & 232.30 &   7.97e-6   &    2.0222    & 166.37
\\
~   & 640  &   1.16e-6   &     2.0056   & 350.26 &   1.96e-6   &    2.0258    & 324.69
\\
~   & 1280 &   3.14e-7   &     1.8923   & 519.77 &   4.77e-7   &    2.0347    & 648.85
\\
\cline{1-8}
~   & 160  &   2.37e-5   &       ~      & 60.32  &   6.76e-5   &      ~       & 98.66
\\
1.5 & 320  &   5.90e-6   &     2.0040   & 192.68 &   2.80e-5   &    1.2724    & 168.72
\\
~   & 640  &   1.47e-6   &     2.0049   & 393.84 &   1.09e-5   &    1.3585    & 338.83
\\
~   & 1280 &   3.65e-7   &     2.0107   & 680.51 &   4.11e-6   &    1.4074    & 666.54
\\
\cline{1-8}
~   & 160  &   2.31e-5   &       ~      & 64.39  &   1.25e-3   &     ~        & 85.25
 \\
1.9 & 320  &   5.75e-6   &     2.0097   & 202.94 &   5.90e-4   &    1.0784    & 163.24
\\
~   & 640  &   1.43e-6   &     2.0104   & 613.22 &   2.77e-4   &    1.0885    & 329.07
\\
~   & 1280 &   3.53e-7   &     2.0137   & 765.90 &   1.30e-4   &    1.0939    & 664.24
\end{tabular*}}
\rule{\temptablewidth}{1pt}
\end{center}
\end{table}
\begin{table}[tbh!]
\setlength{\belowcaptionskip}{-5pt}
\begin{center}
\renewcommand{\arraystretch}{1.12}
\tabcolsep 0pt \caption{Truncation errors and spatial convergence
orders for Example \ref{ex2}}\label{table2-1}
\def\temptablewidth{0.9\textwidth}
\rule{\temptablewidth}{1pt}
{\footnotesize
\begin{tabular*}{\temptablewidth}{@{\extracolsep{\fill}}cccccccc}
\multirow{2}{*}{$\alpha$} & \multirow{2}{*}{$M$} & \multicolumn{3}{c}{difference scheme \eqref{fscheme-eq1}} & \multicolumn{3}{c}{L2C scheme \eqref{L2C-scheme-eq2}}
\\
\cline{3-5}\cline{6-8}
~   &  ~ & $E(h,\tau)$ & $Order_h$ & CPU(s) & $E(h,\tau)$ & $Order_h$ & CPU(s)
\\
\cline{1-8}
~   &  8 &   1.56e-3   &     ~     &  0.45  &   1.56e-3   &    ~      & 71.21
\\
1.1 & 16 &   3.90e-4   &   2.0027  &  0.57  &   3.90e-4   &   2.0025  & 64.16
\\
~   & 32 &   9.74e-5   &   2.0011  &  0.59  &   9.75e-5   &   2.0007  & 84.44
\\
~   & 64 &   2.43e-5   &   2.0022  &  0.75  &   2.44e-5   &   2.0005  & 62.72
\\
\cline{1-8}
~   & 8  &   1.08e-3   &      ~    &  0.27  &   1.08e-3   &      ~    & 58.54
\\
1.5 & 16 &   2.71e-4   &   1.9994  &  0.32  &   2.71e-4   &   1.9986  & 40.84
\\
~   & 32 &   6.77e-5   &   2.0000  &  0.41  &   6.80e-5   &   1.9966  & 42.86
\\
~   & 64 &   1.69e-5   &   2.0004  &  0.55  &   1.71e-5   &   1.9869  & 46.18
\\
\cline{1-8}
~   & 8  &   6.21e-4   &     ~     &  0.28  &   6.35e-4   &      ~    & 68.83
\\
1.9 & 16 &   1.56e-4   &   1.9968  &  0.33  &   1.69e-4   &   1.9071  & 58.60
\\
~   & 32 &   3.89e-5   &   1.9993  &  0.41  &   5.25e-5   &   1.6879  & 61.13
\\
~   & 64 &   9.73e-6   &   2.0004  &  0.60  &   2.33e-5   &   1.1709  & 63.50
\end{tabular*}}
\rule{\temptablewidth}{1pt}
\end{center}
\end{table}

For completeness, the convergence orders of corresponding fast difference scheme on the graded meshes \eqref{fscheme-eq} for the initial-boundary value problem \eqref{eq}-\eqref{bz} with a weak regular solution at the initial time are tested in the next example. Specifically, we adopt the absolute tolerance error $\epsilon=10^{-12}$ and cut-off time $\delta=\tau_k/2+\sigma_k\tau_{k+1}.$
\begin{example}\label{ex3}
In this example, we consider the initial-boundary value problem \eqref{eq}-\eqref{bz} whose solution has weak regularity at the initial time. To verify the temporal accuracy when the solution has initial weak regularity, we choose the exact solution as $u(x,t)=(t^\alpha+1)\sin (\pi x),$ which has a typical regularity at $t=0$ as described in \eqref{regularity}. The source term is
\[f(x,t)=\big[\Gamma(1+\alpha)+\pi^2(t^\alpha+1)\big]\sin(\pi x).\]
\end{example}
Let the final time $T=1,$ the spatial domain $L=1.$ The error $E(M,N)=\max\limits_{1\leq k\leq N}\|u^k-U^k\|_\infty$ is recorded in each run and the experimental order is evaluated by
\[r_N=\log_2\Big(\frac{E(M,N/2)}{E(M,N)}\Big).\]
\begin{table}[tbh!]
\setlength{\belowcaptionskip}{-5pt}
\begin{center}
\renewcommand{\arraystretch}{1.12}
\tabcolsep 0pt \caption{Truncation errors and temporal convergence
orders for Example \ref{ex3}}\label{table3}
\def\temptablewidth{0.9\textwidth}
\rule{\temptablewidth}{1pt}
{\footnotesize
\begin{tabular*}{\temptablewidth}{@{\extracolsep{\fill}}cccccccc}
\multirow{2}{*}{$\alpha$} & \multirow{2}{*}{$N$} & \multicolumn{2}{c}{$r=1$} & \multicolumn{2}{c}{$r=2$} & \multicolumn{2}{c}{$r=3$}
 \\
\cline{3-4}\cline{5-6}\cline{7-8}
~   &  ~  & $E(M,N)$ & $r_N$ & $E(M,N)$ & $r_N$ & $E(M,N)$ & $r_N$
\\
\cline{1-8}
~   & 32  & 5.41e-3  &   ~   & 1.85e-3  &   ~   & 1.39e-3  & ~
\\
1.3 & 64  & 3.62e-3  & 0.58  & 1.17e-3  &  0.66 & 5.50e-4  & 1.33
\\
~   & 128 & 2.60e-3  & 0.48  & 7.63e-4  &  0.62 & 2.71e-4  & 1.02
\\
~   & 256 & 1.96e-3  & 0.40  & 5.03e-4  &  0.60 & 9.87e-5  & 1.01
\\
\cline{1-8}
~   & 32  & 3.87e-3  &   ~   & 1.03e-3  &    ~  & 1.63e-3  & ~
\\
1.5 & 64  & 2.20e-3  & 0.81  & 4.19e-4  &  1.30 & 4.25e-4  & 1.94
\\
~   & 128 & 1.32e-3  & 0.74  & 1.98e-4  &  1.08 & 1.05e-4  & 2.02
\\
~   & 256 & 8.24e-4  & 0.68  & 9.75e-5  &  1.02 & 1.54e-5  & 1.92
\\
\cline{1-8}
~   & 32  & 1.29e-3  &   ~   & 1.78e-3  &   ~   & 3.34e-3  & ~
\\
1.9 & 64  & 4.93e-4  & 1.39  & 4.48e-4  &  1.99 & 8.39e-4  & 1.99
\\
~   & 128 & 2.15e-4  & 1.20  & 1.12e-4  &  2.00 & 2.10e-4  & 2.00
\\
~   & 256 & 1.01e-4  & 1.09  & 2.81e-5  &  2.00 & 5.25e-5  & 2.00
\end{tabular*}}
\rule{\temptablewidth}{1pt}
\end{center}
\end{table}

We fix $M=5000$ to verify the time accuracy. It can be seen from Table \ref{table3} that the temporal convergence order
of the difference scheme \eqref{fscheme-eq} on the graded meshes seems to be $\min\{r(\alpha-1),2\}$ when $r$ is approximately
small and $\alpha$ is bigger, albeit no theoretical proof due to the limited length of the paper, which indicates that the
application of the graded meshes can indeed make the convergence order reach the ideal second-order accuracy.
\begin{remark}
Looking closely at Examples \ref{ex2} and \ref{ex3}, it can be seen that for the initial-boundary value problem \eqref{eq}-\eqref{bz},
when the solution has a certain smoothness, the convergence order can reach the second-order for different values of $\alpha,$ which
is consistent with the theoretical findings, while when the solution has weak regularity at the initial time, its convergence order
appears as $\min\{r(\alpha-1),2\}.$
\end{remark}

\section{Concluding remarks}
\label{sec6:conclude}
We propose a novel H3N3-2$_\sigma$ approximation formula for the Caputo derivative of order $\alpha\in(1,2)$ and derive
an accurate truncation error estimation with second-order accuracy. Then we design an H3N3-2$_\sigma$-based difference
scheme for the initial-boundary value problem of time fractional hyperbolic equation. The numerical stability analysis
and convergence in the sense of $H^1$-norm are proved by the energy method. In general, there are lots of studies on
numerical stability analysis for one-order hyperbolic equations, but those for second-order hyperbolic equations are
very limited, let alone for the fractional hyperbolic equations whose derivative orders are bigger than 1 both in time
and in space. In this sense, the present article is the first one where the numerical stability analysis is studied.
We hope that this article can open up the stability study of numerical methods for fractional hyperbolic equations.

Besides, the finite difference scheme on the graded meshes based on H3N3-2$_\sigma$ is derived. And the technique of
sum-of-exponentials is also considered in the uniform meshes and the graded meshes. The numerical simulations are presented
which support the theoretical analysis and show the advantages over the L2C-based difference scheme.

%


%
 \section*{Conflict of interest}
The authors declare that they have no conflict of interest.



\end{document}